\begin{document}
\title[\hfil fractional Schr\"{o}dinger-Poisson system with critical growth]
{Existence and concentration of positive ground state solutions for nonlinear fractional Schr\"{o}dinger-Poisson system with critical growth}

\author[K. M. Teng ]
{Kaimin Teng}  
\address{Kaimin Teng (Corresponding Author)\newline
Department of Mathematics, Taiyuan
University of Technology, Taiyuan, Shanxi 030024, P. R. China}
\email{tengkaimin2013@163.com}

\author[R. P. Agarwal]
{Ravi P. Agarwal}  
\address{Ravi P. Agarwal\newline
Department of Mathematics, Texas A$\&$M University-Kingsville,
Texas, Kingsville 78363, USA}
\email{ravi.agarwal@tamuk.edu}

\subjclass[2010]{35B38, 35R11}
\keywords{Fractional Schr\"{o}dinger-Poisson system; Concentration-compactness principle; Ground state solution; Palais-Smale condition.}

\begin{abstract}
In this paper, we study the following fractional Schr\"{o}dinger-Poisson system involving competing potential functions
\begin{equation*}
\left\{
  \begin{array}{ll}
    \varepsilon^{2s}(-\Delta)^su+V(x)u+\phi u=K(x)f(u)+Q(x)|u|^{2_s^{\ast}-2}u, & \hbox{in $\mathbb{R}^3$,} \\
    \varepsilon^{2t}(-\Delta)^t\phi=u^2,& \hbox{in $\mathbb{R}^3$,}
  \end{array}
\right.
\end{equation*}
where $\varepsilon>0$ is a small parameter, $f$ is a function of $C^1$ class, superlinear and subcritical nonlinearity, $2_s^{\ast}=\frac{6}{3-2s}$, $s>\frac{3}{4}$, $t\in(0,1)$, $V(x)$ $K(x)$ and $Q(x)$ are positive continuous function. Under some suitable assumptions on $V$, $K$ and $Q$, we prove that there is a family of positive ground state solutions with polynomial growth for sufficiently small $\varepsilon>0$, of which it is concentrating on the set of minimal points of $V(x)$ and the sets of maximal points of $K(x)$ and $Q(x)$. The methods are based on the Nehari manifold, arguments of Brezis-Nirenberg and concentration compactness of P. L. Lions.
\end{abstract}

\maketitle
\numberwithin{equation}{section}
\newtheorem{theorem}{Theorem}[section]
\newtheorem{lemma}[theorem]{Lemma}
\newtheorem{definition}[theorem]{Definition}
\newtheorem{remark}[theorem]{Remark}
\newtheorem{proposition}[theorem]{Proposition}
\newtheorem{corollary}[theorem]{Corollary}
\allowdisplaybreaks

\section{Introduction}
In this paper, we are concerned with the existence of ground state solution and its concentration phenomenon for the following fractional Schr\"{o}dinger-Poisson system
\begin{equation}\label{main}
\left\{
  \begin{array}{ll}
    \varepsilon^{2s}(-\Delta)^su+V(x)u+\phi u=K(x)f(u)+Q(x)|u|^{2_s^{\ast}-2}u, & \hbox{in $\mathbb{R}^3$,} \\
    \varepsilon^{2t}(-\Delta)^t\phi=u^2,& \hbox{in $\mathbb{R}^3$,}
  \end{array}
\right.
\end{equation}
where $s>\frac{3}{4},t\in(0,1)$, $2_s^{\ast}=\frac{6}{3-2s}$, $\varepsilon>0$ is a small parameter. We assume that the potentials $V(x)$, $K(x)$ and $Q(x)$ satisfy the following hypotheses:\\
$(V)$ $V(x)\in C(\mathbb{R}^3,\mathbb{R})$ and $0<V_0=\inf\limits_{\mathbb{R}^3}V(x)<\liminf\limits_{|x|\rightarrow+\infty}V(x)=V_{\infty}<+\infty$.\\
$(K)$ $K(x)\in C(\mathbb{R}^3,\mathbb{R})$ and $\lim\limits_{|x|\rightarrow+\infty}K(x)=K_{\infty}\in(0,+\infty)$ and $K(x)\geq K_{\infty}$ for $x\in\mathbb{R}^3$.\\
$(Q_0)$ $Q(x)\in C(\mathbb{R}^3,\mathbb{R})$ and $\lim\limits_{|x|\rightarrow+\infty}Q(x)=Q_{\infty}\in(0,+\infty)$ and $Q(x)\geq Q_{\infty}$.\\
$(Q_1)$ There exists $\delta>0$ and $\rho_0>0$ such that $|Q(x)-Q(x_0)|\leq \delta|x-x_0|^{\alpha}$ for $|x-x_0|\leq\rho_0$, where $\frac{3-2s}{2}\leq\alpha< \frac{3}{2}$ and $Q(x_0)=\max_{x\in\mathbb{R}^3}Q(x)$.\\
$(\mathcal{H})$ $\Theta:=\Theta_V\cap\Theta_K\cap\Theta_Q\neq\emptyset$, where $\Theta_V=\{x\in\mathbb{R}^3:\,\, V(x)=V_0=\inf_{x\in\mathbb{R}^3}V(x)\}$ and
\begin{align*}
&\Theta_K=\{x\in\mathbb{R}^3:\,\, K(x)=K_0=\max\limits_{x\in\mathbb{R}^3}K(x)\}\quad\Theta_Q=\{x\in\mathbb{R}^3:\,\, Q(x)=Q_0=\max\limits_{x\in\mathbb{R}^3}Q(x)\}.
\end{align*}
Without loss of generality, we may assume that $0\in\Theta$. Here the non-local operator $(-\Delta)^s$ ($s\in(0,1)$), which is called fractional Laplace operator, can be defined by
\begin{equation*}
(-\Delta)^sv(x)=C_s\,P.V.\int_{\mathbb{R}^3}\frac{v(x)-v(y)}{|x-y|^{3+2s}}\,{\rm d}y=C_s\lim_{\varepsilon\rightarrow0}\int_{\mathbb{R}^3\backslash B_{\varepsilon}(x)}\frac{v(x)-v(y)}{|x-y|^{3+2s}}\,{\rm d}y
\end{equation*}
for $v\in\mathcal{S}(\mathbb{R}^3)$, where $\mathcal{S}(\mathbb{R}^3)$ is the Schwartz space of rapidly decaying $C^{\infty}$ function, $B_{\varepsilon}(x)$ denote an open ball of radius $r$ centered at $x$ and the normalization constant $C_s=\Big(\int_{\mathbb{R}^3}\frac{1-\cos(\zeta_1)}{|\zeta|^{3+2s}}\,{\rm d}\zeta\Big)^{-1}$. For $u\in\mathcal{S}(\mathbb{R}^3)$, the fractional Laplace operator $(-\Delta)^s$ can be expressed as an inverse Fourier transform
\begin{equation*}
(-\Delta)^su=\mathcal{F}^{-1}\Big((2\pi|\xi|)^{2s}\mathcal{F}u(\xi)\Big),
\end{equation*}
where $\mathcal{F}$ and $\mathcal{F}^{-1}$ denote the Fourier transform and inverse transform, respectively.

Formally, system \eqref{main} consists of a fractional Schr\"{o}dinger equation coupled with a fractional Poisson equation. It can be regarded as the associated fractional case of the following classical Schr\"{o}dinger-Poisson system
\begin{equation}\label{main-1}
\left\{
  \begin{array}{ll}
    -\varepsilon^2\Delta u+V(x)u+\phi u=g(x,u), & \hbox{in $\mathbb{R}^3$,} \\
    -\varepsilon^2\Delta \phi=u^2,& \hbox{in $\mathbb{R}^3$.}
  \end{array}
\right.
\end{equation}
It is well known that system \eqref{main-1} has a strong physical meaning because it appears in quantum mechanics models (see e.g. \cite{CP,LS}) and in semiconductor
theory \cite{MRS}. In particular, systems like (1.2) have been introduced in \cite{BF} as a model to describe solitary waves. In \eqref{main-1}, the first equation is a nonlinear stationary equation (where the nonlinear term simulates the interaction between many particles) that is coupled with a Poisson equation, to be satisfied by $\phi$, meaning
that the potential is determined by the charge of the wave function. For this reason, \eqref{main-1} is referred to as a nonlinear Schr\"{o}dinger-Poisson system. In recent years, there has been increasing attention to systems like \eqref{main-1} on the existence of positive solutions, ground state solutions, multiple solutions and semiclassical states; see for examples \cite{AM,AR,BF,HZ,Ruiz,WTXZ,ZZ} and the references therein.

The other motivation to study the system \eqref{main} lies in the important roles that fractional equations
involving fractional operators play in the problems of Physics, Chemistry and Geometry, and so on. Indeed, fractional operators appear in many problems, such as: fractional quantum mechanics \cite{La,La1}, anomalous diffusion \cite{MK}, financial \cite{CT1}, obstacle problems \cite{S}, conformal geometry and minimal surfaces \cite{CM}. With the help of the harmonic extension technique developed by Caffarelli and Silvestre \cite{CS}, the non-local problem can be reduced to a local one, choosing a weighted Sobolev space as the work space, the usual variational methods have been successfully applied to nonlinear problems involving fractional Laplacian, we refer to interesting readers to see the related works \cite{AMi,BCPS,BV,CT,DMV,DPW,FL,Teng1} and so on. Another power technique is that one directly take the usual fractional Sobolev space as the work space so that the variational approaches can be applied, the related work can be referred to see \cite{BV,CW,DMV,DPDV,DPV,Sec,SZ,Teng} and so on.

To the best of our knowledge, there are only few recent papers dealing with a similar system like \eqref{main}. For example, in \cite{Teng2}, we established the existence of positive ground state solution for a similar system involving a critical Sobolev exponent
\begin{equation}\label{main1}
\left\{
  \begin{array}{ll}
    (-\Delta)^su+V(x)u+\phi u=|u|^{p-1}u+|u|^{2_s^{\ast}-2}u, & \hbox{in $\mathbb{R}^3$,} \\
    (-\Delta)^t\phi=u^2,& \hbox{in $\mathbb{R}^3$,}
  \end{array}
\right.
\end{equation}
by using the Nehari-Pohozaev manifold combing monotone trick with global compactness Lemma. Using the similar methods, in \cite{Teng3}, positive ground state solutions for subcritical problem, i.e., $|u|^{p-1}u+|u|^{2_s^{\ast}-2}u$ is replaced by $|u|^{p-1}u$ with $p\in(2,2_s^{\ast}-1)$, were established when $s=t$. In \cite{Wei}, the existence of infinitely many (but possibly sign changing) solutions by means of the Fountain Theorem under suitable assumptions on nonlinearity term. In \cite{ZDS}, the authors studied the existence of radial solutions for system \eqref{main1} with replacing $|u|^{p-1}u+|u|^{2_s^{\ast}-2}u$ by $f(u)$, where the nonlinearity $f(u)$ verifies the subcritical or critical assumptions of Berestycki-Lions type. In \cite{MS}, the authors studied the semiclassical state of the following system
\begin{equation*}
\left\{
  \begin{array}{ll}
    \varepsilon^{2s}(-\Delta)^su+V(x)u+\phi u=f(u), & \hbox{in $\mathbb{R}^N$,} \\
    \varepsilon^{\theta}(-\Delta)^{\frac{\alpha}{2}}\phi=\gamma_{\alpha}u^2,& \hbox{in $\mathbb{R}^N$,}
  \end{array}
\right.
\end{equation*}
where $s\in(0,1)$, $\alpha\in(0,N)$, $\theta\in(0,\alpha)$, $N\in(2s,2s+\alpha)$, $\gamma_{\alpha}$ is a positive constant, $f(u)$ satisfies the following subcritical growth assumptions: $0<KF(t)\leq f(t)t$ with some $K>4$ for all $t\geq0$ and $\frac{f(t)}{t^3}$ is strictly increasing on $(0,+\infty)$. By adapting some ideas of Benci, Cerami and Passaseo \cite{BC,BCP} and using the Ljusternick-Schnirelmann Theory, the authors obtained the multiplicity of positive solutions which concentrate on the minima of $V(x)$ as $\varepsilon\rightarrow0$. Of course, recently, this methods have been successfully applied to other many problems, such as: Schr\"{o}dinger-Poisson system \cite{HZ}, fractional Schr\"{o}dinger equations \cite{HZ1}, $p$-Laplacian problem \cite{AF}, Kirchhoff type problems \cite{HZ2}, and the references therein. In \cite{LZ}, by using the methods mentioned before, Liu and Zhang proved the existence and concentration of positive ground state solution for problem \eqref{main} when $K(x)\equiv1$ and $Q(x)\equiv1$, but they have not discuss the decay of solutions.

In the last decades, the existence, multiplicity and concentration behavior of solutions of nonlinear problems involving competing potential functions of the form
\begin{equation}\label{equ1-1}
-h^2\Delta u+V(x)u=K(x)|u|^{p-1}u+Q(x)|u|^{q-1}u \quad x\in\mathbb{R}^N
\end{equation}
where $h>0$, $1<q<p<\frac{N+2}{N-2}$, have been investigated by several scholars. Such as, Rabinowtiz \cite{Rabinowtiz} proved that if $V$ is coercive and
$K$, $Q$ satisfy suitable assumptions, a result implies the existence of ground state solutions for problem \eqref{equ1-1}, for any $h>0$. Wang and Zeng \cite{WZ} assumed that $V(x)$ has a positive lower bound, $K(x)$ is bounded and positive, $Q(x)$ is bounded (and allowed to change sign), they proved the existence of a ground state solution of \eqref{equ1-1}, for any $h>0$ small. Furthermore, they studied the concentration behaviour of such solutions and give a necessary condition for the location of the concentration points for positive bound states. Under the same assumptions on $V(x)$, $K(x)$ and $Q(x)$, Cingolani and Lazzo \cite{CL} used the Ljusternik-Schnirelman category to get the multiplicity of positive solutions for problem \eqref{equ1-1}. When $V(x)$, $K(x)$ and $Q(x)$ are all bounded and positive functions, Zhao and Zhao \cite{ZZ} considered the critical Schr\"{o}dinger-Poisson system \eqref{main-1} with $\varepsilon=1$ and $g(x,u)=K(x)|u|^{p-2}u+Q(x)|u|^{2^{\ast}-2}u$, and the existence of positive ground solutions was obtained. In \cite{WTXZ}, the authors proved the existence and concentration of positive solutions for system \eqref{main-1} with $g(x,u)=K(x)f(u)+Q(x)|u|^{2^{\ast}-2}u$, where $f$ is some superlinear-4 growth nonlinearity. In \cite{LG}, the Kirchhoff type problem with competing potential was considered and the existence and concentration behavior of positive solution were established. In addition, fractional Schr\"{o}dinger equations involving the competing potentials of the form
\begin{equation*}
(-\Delta)^{\frac{s}{2}}u+V(x)u=K(x)|u|^{p-2}u+Q(x)|u|^{2_s^{\ast}-2}u\quad x\in\mathbb{R}^N
\end{equation*}
with $2<p<2_s^{\ast}$, have been considered in \cite{Teng1}, the existence of ground state solution was obtained.

Motivated by the above-cited works, the aim of this paper is to consider the existence and concentration of positive solutions for fractional Schr\"{o}dinger-Poisson system with competing potentials. As far as we know, there are few results on the existence and concentration of positive solution for system \eqref{main}, and even in the $s=t=1$ case. There are some difficulties compared with classical Schrodinger-Poisson system. One is the $L^{\infty}$-estimate, owing to the work of Dipierro, Medina and Valdinoci \cite{DMV}, similarly, we can get the $L^{\infty}$-estimate. The other is the decay estimate of solutions, with the help of works in \cite{FQT}, \cite{AMi} and \cite{HZ1}, we can establish the decay estimate at infinity. Below we give some assumptions on the nonlinearity $f$:\\
$(f_0)$ $f\in C^1(\mathbb{R}^3)$, $f(t)=o(t^3)$, $f(t)t>0$ for $t>0$ and $f(t)=0$ for $t\leq0$;\\
$(f_1)$ $\frac{f(t)}{t^3}$ is strictly increasing on the interval $(0,\infty)$;\\
$(f_2)$ $f(t)\geq c_0t^{q-1}$ and $|f'(t)|\leq c_1(1+|t|^{p-2})$ for some constants $c_0,c_1>0$, where $4< q<p<2_s^{\ast}$.

Our main result is stated as follows.
\begin{theorem}\label{thm1-1}
Suppose that $(V)$, $(Q_0)$, $(Q_1)$, $(K)$, $(f_0)$--$(f_2)$ hold and let $s\in(\frac{3}{4},1)$. Then\\
$(i)$ there exists $\varepsilon_0>0$ such that system \eqref{main} possesses a positive solution $(u_{\varepsilon},\phi_{\varepsilon})\in H^s(\mathbb{R}^3)\times \mathcal{D}^{t,2}(\mathbb{R}^3)$, where $H^s(\mathbb{R}^3)$ and $\mathcal{D}^{t,2}(\mathbb{R}^3)$ are defined in Section 2;\\
$(ii)$ $u_{\varepsilon}$ possesses a maximum point $z_{\varepsilon}$ in $\mathbb{R}^3$ such that
\begin{equation*}
\lim_{\varepsilon\rightarrow0}V(z_{\varepsilon})=V_0,\quad\lim_{\varepsilon\rightarrow0}K(z_{\varepsilon})=K_0,\quad \lim_{\varepsilon\rightarrow0}Q(z_{\varepsilon})=Q_0,
\end{equation*}
and for any $z_{\varepsilon}\rightarrow x_0\in\Theta$ as $\varepsilon\rightarrow0$, set $v_{\varepsilon}(x)=u_{\varepsilon}(\varepsilon x+z_{\varepsilon})$, the solution $(v_{\varepsilon},\phi_{\varepsilon})$ converges strongly in $H^s(\mathbb{R}^3)$ to a solution $(v,\phi)$ of
\begin{equation}\label{equ1-2}
\left\{
  \begin{array}{ll}
    (-\Delta)^sv+V(x_0)v+\phi v=K(x_0)f(v)+Q(x_0)|v|^{2_s^{\ast}-2}v, & \hbox{in $\mathbb{R}^3$,} \\
    (-\Delta)^t\phi=v^2,& \hbox{in $\mathbb{R}^3$;}
  \end{array}
\right.
\end{equation}
$(iii)$ there exist two constants $C>0$ and $C_0\in\mathbb{R}$ such that
\begin{equation*}
u_{\varepsilon}\leq\frac{C\varepsilon^{3+2s}}{C_0\varepsilon^{3+2s}+|x-z_{\varepsilon}|^{3+2s}}\quad \text{for all}\,\, x\in\mathbb{R}^3.
\end{equation*}
\end{theorem}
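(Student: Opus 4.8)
The first step is the semiclassical change of variables $u(x)=v(x/\varepsilon)$, $\phi(x)=\psi(x/\varepsilon)$, which turns \eqref{main} into
\begin{equation*}
\left\{
\begin{array}{ll}
(-\Delta)^sv+V(\varepsilon x)v+\psi v=K(\varepsilon x)f(v)+Q(\varepsilon x)|v|^{2_s^{\ast}-2}v,&\text{in }\mathbb{R}^3,\\
(-\Delta)^t\psi=v^2,&\text{in }\mathbb{R}^3.
\end{array}
\right.
\end{equation*}
Because $s>\frac34$, one has $v^2\in L^{\frac{6}{3+2t}}(\mathbb{R}^3)$ for $v\in H^s(\mathbb{R}^3)$, so the second equation is solved through the fractional Riesz potential by a unique $\psi=\psi_v\ge0$, the map $v\mapsto\psi_v$ is of class $C^1$, and the functional
\begin{equation*}
I_\varepsilon(v)=\frac12\Big(\iint_{\mathbb{R}^6}\frac{|v(x)-v(y)|^2}{|x-y|^{3+2s}}\,dx\,dy+\int_{\mathbb{R}^3}V(\varepsilon x)v^2\Big)+\frac14\int_{\mathbb{R}^3}\psi_vv^2-\int_{\mathbb{R}^3}K(\varepsilon x)F(v)-\frac1{2_s^{\ast}}\int_{\mathbb{R}^3}Q(\varepsilon x)|v|^{2_s^{\ast}},
\end{equation*}
with $F(t)=\int_0^tf(\tau)\,d\tau$, is $C^1$ on $H^s(\mathbb{R}^3)$ and $\int\psi_vv^2$ is weakly continuous. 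By $(f_0)$--$(f_1)$ the Nehari manifold $\mathcal{N}_\varepsilon=\{v\neq0:\,I_\varepsilon'(v)v=0\}$ is a natural $C^1$ constraint met exactly once by each ray, and $c_\varepsilon:=\inf_{\mathcal{N}_\varepsilon}I_\varepsilon$ equals the mountain-pass level. The plan is to obtain a minimizer on $\mathcal{N}_\varepsilon$ for small $\varepsilon$, deduce that it gives a positive solution of \eqref{main}, and then extract the concentration and decay.

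\textbf{Limit problems and the critical threshold.} For $y\in\mathbb{R}^3$ denote by $I_y$ the autonomous functional with the potentials frozen at $(V(y),K(y),Q(y))$ and by $m(y)$ its ground-state level, and put $m_0:=m(V_0,K_0,Q_0)$. Scaling the Nehari projection shows $m(\cdot)$ is nonincreasing in the $V$-argument and nondecreasing in the $K$- and $Q$-arguments, strictly so, hence $m_0\le m(y)$ for every $y$ and, by $(V)$, $m_0<m(V_\infty,K_\infty,Q_\infty)$. The decisive point is the Brezis–Nirenberg-type estimate
\begin{equation*}
c_\varepsilon<\frac{s}{3}\,Q_0^{-\frac{3-2s}{2s}}S^{\frac{3}{2s}}\qquad\text{for all small }\varepsilon>0,
\end{equation*}
$S$ being the best constant in the fractional Sobolev inequality. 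I would prove it by first showing $\limsup_{\varepsilon\to0}c_\varepsilon\le m_0$ through a test family concentrated near $0\in\Theta$, and then placing $m_0$ itself below the threshold by inserting a truncated Aubin–Talenti instanton $\eta U_\sigma$ for $(-\Delta)^s$ into $I_0$, expanding in $\sigma$, absorbing the nonlocal and the $V$-contribution, and exploiting the lower bound $f(t)\ge c_0t^{q-1}$ with $q>4$ from $(f_2)$ together with the Hölder control $(Q_1)$ on $Q$ near its maximum to make the lower-order gain dominate the instanton tail; this is exactly where the restrictions $s>\frac34$, $q>4$ and $\frac{3-2s}{2}\le\alpha<\frac32$ are used.

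\textbf{Compactness, existence, positivity.} Granting the threshold, a $(PS)_c$ sequence for $I_\varepsilon$ with $c$ below it is bounded by $(f_1)$; the nonlocal term passes to the limit by weak continuity, the subcritical term by local compact embeddings and the behaviour of the potentials at infinity, and the critical term by the Brezis–Lieb lemma combined with P. L. Lions' concentration–compactness principle for the measures $|(-\Delta)^{s/2}v_n|^2$ and $|v_n|^{2_s^{\ast}}$, the concentration atoms being ruled out precisely by $c<\frac{s}{3}Q_0^{-(3-2s)/2s}S^{3/2s}$. Consequently $I_\varepsilon$ attains $c_\varepsilon$ on $\mathcal{N}_\varepsilon$ for small $\varepsilon$, yielding a nontrivial solution $(v_\varepsilon,\psi_\varepsilon)$; since $f(t)=0$ for $t\le0$ and $\psi_{v_\varepsilon}\ge0$, a standard test against $v_\varepsilon^-$ together with the maximum principle for $(-\Delta)^s$ gives $v_\varepsilon>0$, and a Moser/De Giorgi iteration in the spirit of Dipierro–Medina–Valdinoci \cite{DMV} furnishes a uniform $L^\infty$-bound and the required regularity. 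Undoing the rescaling produces the solution $(u_\varepsilon,\phi_\varepsilon)$ of (i).

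\textbf{Concentration and decay; main obstacle.} For (ii), let $y_\varepsilon$ be (near) a maximum point of $v_\varepsilon$. A vanishing–nonvanishing dichotomy together with the uniform bound on $c_\varepsilon$ excludes vanishing, so $v_\varepsilon(\cdot+y_\varepsilon)$ converges, weakly and then strongly in $H^s(\mathbb{R}^3)$ by the compactness analysis, to a nonzero solution of a problem frozen at $x_0=\lim_\varepsilon\varepsilon y_\varepsilon$; the chain $m_0\ge\liminf c_\varepsilon\ge m(x_0)\ge m_0$ forces $m(x_0)=m_0$, and strict monotonicity of $m(\cdot)$ then yields $V(x_0)=V_0$, $K(x_0)=K_0$, $Q(x_0)=Q_0$, i.e.\ $x_0\in\Theta$. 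Setting $z_\varepsilon=\varepsilon y_\varepsilon$, which one checks is a genuine maximum point of $u_\varepsilon$, gives the stated limits and the strong convergence of $v_\varepsilon=u_\varepsilon(\varepsilon\cdot+z_\varepsilon)$ to a solution of \eqref{equ1-2}. For (iii) I would first show $v_\varepsilon\to0$ at infinity uniformly in small $\varepsilon$, and then, following \cite{FQT,AMi,HZ1}, use that $w(x)=(1+|x|^{3+2s})^{-1}$ satisfies $(-\Delta)^sw+\tfrac{V_0}{2}w\ge0$ outside a large ball, so the maximum principle for $(-\Delta)^s+\tfrac{V_0}{2}$ on $\{|x|\ge R\}$ gives $v_\varepsilon(x)\le C(1+|x|^{3+2s})^{-1}$ uniformly; rescaling through $u_\varepsilon(x)=v_\varepsilon((x-z_\varepsilon)/\varepsilon)$ delivers the bound in (iii). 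The main obstacle is the fine energy estimate placing $c_\varepsilon$ strictly below the fractional-Sobolev threshold: the nonlocal Poisson term and the competing potentials contribute several additional terms to the instanton expansion that must be balanced at once, and it is there that all the structural assumptions are genuinely consumed; a close second is the polynomial decay (iii), which for $(-\Delta)^s$ needs the explicit barrier above in place of any ODE comparison.
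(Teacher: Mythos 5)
Your proposal follows essentially the same route as the paper: rescaling, Nehari manifold and mountain-pass level, a Brezis--Nirenberg threshold obtained from truncated instantons, comparison with the autonomous limit problems (at points of $\Theta$ and at infinity), concentration--compactness for the $(PS)$ sequences, positivity via testing with the negative part plus the maximum principle and the $L^\infty$ bound of Dipierro--Medina--Valdinoci, and uniform polynomial decay through a comparison function decaying like $|x|^{-(3+2s)}$. Two variations are worth noting. First, you place $c_\varepsilon$ below the critical threshold indirectly, via $\limsup_{\varepsilon\to0}c_\varepsilon\le m_0$ together with an instanton estimate for the autonomous level $m_0$; this is a legitimate reorganization (it is essentially the paper's Lemma 4.1 plus the second half of Lemma 4.4), but then $(Q_1)$ is not actually consumed where you say it is, since $Q\equiv Q_0$ in $I_0$ -- the paper uses $(Q_1)$ in its direct estimate of $c_\varepsilon$ (Lemma 3.3). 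Second, you invoke the measure/atom version of Lions' principle, whereas the paper applies the trichotomy to the energy density $\rho_n$ and rules out dichotomy and escape to infinity by Nehari-type comparisons with $I_\infty$; both work, but in your route the loss of mass at infinity must still be excluded by $c_\varepsilon<m_\infty$, which you only gesture at (it does follow from $\limsup c_\varepsilon\le m_0<m_\infty$).

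Two claims need correction, though neither is fatal to the plan. The map $v\mapsto\int_{\mathbb{R}^3}\phi^t_vv^2\,{\rm d}x$ is \emph{not} weakly continuous on $H^s(\mathbb{R}^3)$ (translating bumps give a counterexample), so "the nonlocal term passes to the limit by weak continuity" is not a valid step as stated; the correct statement, which the paper uses (Lemma 2.1(iv)), requires in addition strong convergence in $L^r(\mathbb{R}^3)$ for $2\le r<2_s^{\ast}$, and in the compactness argument this is available only after vanishing, dichotomy and escape to infinity have been ruled out. Also, your monotonicity of $m(\cdot)$ is stated backwards: it is nondecreasing in the $V$-argument and nonincreasing in the $K$- and $Q$-arguments; the inequalities you actually use, $m_0\le m(y)$ and $m_0<m(V_\infty,K_\infty,Q_\infty)$, are the ones that follow from the correct directions.
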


We also obtain a supplementary result of a nonexistence of ground state solution for system \eqref{main}.
\begin{theorem}\label{thm1-2}
Assume that $(f_0)$--$(f_2)$ hold and the continuous functions $V(x)$, $K(x)$, $Q(x)$ satisfies\\
$(\mathcal{H})$ $V(x)\geq V_{\infty}=\lim\limits_{|x|\rightarrow\infty}V(x)=V_0$, $K(x)\leq K_{\infty}$ and $Q(x)\leq Q_{\infty}$, which one of the strictly inequality holds on a positive measure subset.\\
Then for any $\varepsilon>0$, system \eqref{main} has no ground state solution.
\end{theorem}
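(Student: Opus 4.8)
The plan is to compare the ground-state level of \eqref{main} with that of the autonomous ``problem at infinity'' and show the two coincide, so that a ground state of \eqref{main} would force the competing potentials to equal their limit values a.e., contradicting the strict inequality in $(\mathcal{H})$. Fix $\varepsilon>0$. Solving the fractional Poisson equation produces $\phi=\phi^\varepsilon_u\ge0$ and reduces \eqref{main} to the critical points of
\[
I_\varepsilon(u)=\frac{\varepsilon^{2s}}{2}[u]_s^2+\frac12\int_{\mathbb{R}^3}V(x)u^2+\frac14\int_{\mathbb{R}^3}\phi^\varepsilon_u u^2-\int_{\mathbb{R}^3}K(x)F(u)-\frac{1}{2_s^{\ast}}\int_{\mathbb{R}^3}Q(x)|u|^{2_s^{\ast}},
\]
with Nehari manifold $\mathcal{N}_\varepsilon$ and ground-state level $c_\varepsilon=\inf_{\mathcal{N}_\varepsilon}I_\varepsilon$; write $I^\infty_\varepsilon,\mathcal{N}^\infty_\varepsilon,c^\infty_\varepsilon$ for the same objects with $V,K,Q$ replaced by the constants $V_\infty,K_\infty,Q_\infty$. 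Since $V,K,Q$ are positive, continuous and have finite limits at infinity they are bounded; by $(f_0)$--$(f_1)$ (and $2_s^{\ast}>4$, which holds since $s>\frac{3}{4}$) every $u\neq0$ has a unique Nehari scaling $t_uu\in\mathcal{N}^\infty_\varepsilon$ with $I^\infty_\varepsilon(t_uu)=\max_{t\ge0}I^\infty_\varepsilon(tu)$, and likewise for $\mathcal{N}_\varepsilon$; moreover, by a Brezis--Nirenberg argument carrying the extra nonlocal term (using $f(t)\ge c_0t^{q-1}$ with $q>4$ to push $c^\infty_\varepsilon$ below the critical Sobolev threshold), the limit problem admits a positive ground state $w$.

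The first step is the pointwise inequality: since $F\ge0$ by $(f_0)$, $\phi^\varepsilon_u\ge0$, $V\ge V_\infty$, $K\le K_\infty$, $Q\le Q_\infty$,
\[
I_\varepsilon(u)-I^\infty_\varepsilon(u)=\frac12\int(V-V_\infty)u^2+\int(K_\infty-K)F(u)+\frac{1}{2_s^{\ast}}\int(Q_\infty-Q)|u|^{2_s^{\ast}}\ \ge\ 0
\]
for every $u$, the three summands on the right being nonnegative, and the difference of the corresponding Nehari functionals is controlled by the same three quantities. From this I would deduce $c_\varepsilon=c^\infty_\varepsilon$. For ``$c^\infty_\varepsilon\le c_\varepsilon$'': for $u\in\mathcal{N}_\varepsilon$ take $t_uu\in\mathcal{N}^\infty_\varepsilon$ and chain $c^\infty_\varepsilon\le I^\infty_\varepsilon(t_uu)\le I_\varepsilon(t_uu)\le\max_{t\ge0}I_\varepsilon(tu)=I_\varepsilon(u)$, then take the infimum over $\mathcal{N}_\varepsilon$. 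For ``$c_\varepsilon\le c^\infty_\varepsilon$'': translate the ground state $w$ to $w_n=w(\cdot-y_n)$ with $|y_n|\to\infty$; the seminorm, the $L^2$-norm and the nonlocal term are translation invariant, while $V(\cdot+y_n)\to V_\infty$, $K(\cdot+y_n)\to K_\infty$, $Q(\cdot+y_n)\to Q_\infty$ boundedly (by dominated convergence, using boundedness of the potentials), so the Nehari scalings $s_n$ with $s_nw_n\in\mathcal{N}_\varepsilon$ converge to $1$ and $c_\varepsilon\le I_\varepsilon(s_nw_n)\to I^\infty_\varepsilon(w)=c^\infty_\varepsilon$.

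Now suppose, for contradiction, that \eqref{main} admits a ground state $(u,\phi)$, so $u\in\mathcal{N}_\varepsilon$ and $I_\varepsilon(u)=c_\varepsilon=c^\infty_\varepsilon$. Replacing $u$ by the Nehari projection of $|u|$ — which is again a ground state because $I_\varepsilon(|v|)\le I_\varepsilon(v)$ (note $|v|^2=v^2$, $F(|v|)\ge F(v^+)=F(v)$ and $[|v|]_s\le[v]_s$), hence $\max_{t\ge0}I_\varepsilon(t|u|)\le\max_{t\ge0}I_\varepsilon(tu)=c_\varepsilon$ — we may assume $u\ge0$, and then the strong maximum principle for $(-\Delta)^s$ (using the $L^\infty$/regularity estimates established earlier, and that $f(0)=0$, $K,Q>0$) gives $u>0$ a.e. in $\mathbb{R}^3$. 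With $t_uu\in\mathcal{N}^\infty_\varepsilon$ the Nehari scaling,
\[
c^\infty_\varepsilon\le I^\infty_\varepsilon(t_uu)\le I_\varepsilon(t_uu)\le\max_{t\ge0}I_\varepsilon(tu)=I_\varepsilon(u)=c^\infty_\varepsilon,
\]
so all inequalities are equalities; in particular $I_\varepsilon(t_uu)=I^\infty_\varepsilon(t_uu)$, which by the identity above forces
\[
\int(V-V_\infty)(t_uu)^2=\int(K_\infty-K)F(t_uu)=\int(Q_\infty-Q)|t_uu|^{2_s^{\ast}}=0,
\]
each integrand being nonnegative with vanishing sum. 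But $t_u>0$ and $u>0$ a.e., so $(t_uu)^2$, $F(t_uu)$ and $|t_uu|^{2_s^{\ast}}$ are all positive a.e.; since one of $V-V_\infty$, $K_\infty-K$, $Q_\infty-Q$ is strictly positive on a set of positive measure, the corresponding integral is strictly positive — a contradiction. Hence \eqref{main} has no ground state for any $\varepsilon>0$.

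I expect the two genuinely delicate ingredients to be (a) the existence of a positive ground state $w$ for the autonomous limit problem, a Brezis--Nirenberg-type estimate complicated by the nonlocal term $\phi^\varepsilon_u u$ but essentially part of the analysis behind Theorem \ref{thm1-1}, and (b) the inequality $c_\varepsilon\le c^\infty_\varepsilon$, i.e. verifying that the Nehari scalings of the translated ground states remain bounded and bounded away from zero and converge to $1$; once these are in place the final contradiction is elementary.
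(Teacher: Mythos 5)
Your proof takes essentially the same route as the paper: establish $c_\varepsilon=m_\infty$ (one direction by projecting $u\in\mathcal{N}_\varepsilon$ onto the limit Nehari manifold and using $I_\infty\le I_\varepsilon$, the other by translating the autonomous ground state to infinity and checking the Nehari scalings tend to $1$), and then derive a contradiction from the resulting chain of equalities forcing the three potential-difference integrals to vanish. You are in fact a bit more careful than the paper at the last step: you explicitly pass to the Nehari projection of $|u|$ and invoke the strong maximum principle to get $u>0$ a.e., which is what is actually needed to turn the positive-measure strict inequality in $(\mathcal{H})$ into a strictly positive integral, whereas the paper asserts $I_\infty(t_0u_0)<I_{\varepsilon_0}(t_0u_0)$ without justifying that $u_0$ does not vanish on the set where the strict potential inequality holds.
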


The paper is organized as follows. In section 2, we present some preliminaries results. In section 3, we will prove the compactness condition. In section 4, the existence of positive ground state solutions of autonomous problem defined in section 2 and system \eqref{main} are established. Section 5 is devoted to proving the concentration of positive solutions. Section 6 is to prove the nonexistence of ground state solutions. In Appendix, we give some estimates for extremal function defined in Section 3.

\section{Variational Setting}

In this section, we outline the variational framework for problem \eqref{main} and give some preliminary Lemma. In the sequel, we denote by $\|\cdot\|_{p}$ the usual norm of the space $L^p(\mathbb{R}^N)$, the letters $c_i$ ($i=1,2,\ldots$), $C_i$, $C$ will be indiscriminately used to denote various positive constants whose exact values are irrelevant. We denote $\widehat{u}$ the Fourier transform of $u$ for simplicity.

It is easily seen that, just performing the change of variables $u(x)\rightarrow u(x/\varepsilon)$ and $\phi(x)\rightarrow \phi(x/\varepsilon)$, and taking $z=x/\varepsilon$, problem \eqref{main} can be rewritten as the following equivalent form
\begin{equation}\label{main1}
\left\{
  \begin{array}{ll}
   (-\Delta)^su+V(\varepsilon z)u+\phi u=K(\varepsilon z)f(u)+Q(\varepsilon z)|u|^{2_s^{\ast}-2}u, & \hbox{in $\mathbb{R}^3$,} \\
    (-\Delta)^t\phi=u^2,& \hbox{in $\mathbb{R}^3$.}
  \end{array}
\right.
\end{equation}
which will be referred from now on.
\subsection{Work space stuff}
For $\alpha\in(0,1)$, we define the homogeneous fractional Sobolev space $\mathcal{D}^{\alpha,2}(\mathbb{R}^3)$ as follows
\begin{equation*}
\mathcal{D}^{\alpha,2}(\mathbb{R}^3)=\Big\{u\in L^{2_{\alpha}^{\ast}}(\mathbb{R}^3)\,\,\Big|\,\,|\xi|^{\alpha}\widehat{u}(\xi)\in L^2(\mathbb{R}^3)\Big\}
\end{equation*}
which is the completion of $C_0^{\infty}(\mathbb{R}^3)$ under the norm
\begin{equation*}
\|u\|_{\mathcal{D}^{\alpha,2}}=\Big(\int_{\mathbb{R}^3}|(-\Delta)^{\frac{\alpha}{2}}u|^2\,{\rm d}x\Big)^{\frac{1}{2}}=\Big(\int_{\mathbb{R}^3}|\xi|^{2\alpha}|\widehat{u}(\xi)|^2\,{\rm d}\xi\Big)^{\frac{1}{2}}
\end{equation*}
and for any $\alpha\in(0,1)$, there exists a best Sobolev constant $\mathcal{S}_{\alpha}>0$ such that
\begin{equation}\label{equ2-1}
\mathcal{S}_{\alpha}=\inf_{u\in \mathcal{D}^{\alpha,2}}\frac{\int_{\mathbb{R}^3}|(-\Delta)^{\frac{\alpha}{2}}u|^2\,{\rm d}x}{\Big(\int_{\mathbb{R}^3}|u(x)|^{2_{\alpha}^{\ast}}\,{\rm d}x\Big)^{\frac{2}{2_{\alpha}^{\ast}}}}.
\end{equation}

The fractional Sobolev space $H^{\alpha}(\mathbb{R}^3)$ can be described by means of the Fourier transform, i.e.
\begin{equation*}
H^{\alpha}(\mathbb{R}^3)=\Big\{u\in L^2(\mathbb{R}^3)\,\,\Big|\,\,\int_{\mathbb{R}^3}(|\xi|^{2\alpha}|\widehat{u}(\xi)|^2+|\widehat{u}(\xi)|^2)\,{\rm d}\xi<+\infty\Big\}.
\end{equation*}
In this case, the inner product and the norm are defined as
\begin{equation*}
(u,v)=\int_{\mathbb{R}^3}(|\xi|^{2\alpha}\widehat{u}(\xi)\overline{\widehat{v}(\xi)}+\widehat{u}(\xi)\overline{\widehat{v}(\xi)})\,{\rm d}\xi
\end{equation*}
\begin{equation*}
\|u\|_{H^{\alpha}}=\bigg(\int_{\mathbb{R}^3}(|\xi|^{2\alpha}|\widehat{u}(\xi)|^2+|\widehat{u}(\xi)|^2)\,{\rm d}\xi\bigg)^{\frac{1}{2}}.
\end{equation*}
From Plancherel's theorem we have $\|u\|_2=\|\widehat{u}\|_2$ and $\||\xi|^{\alpha}\widehat{u}\|_2=\|(-\Delta)^{\frac{\alpha}{2}}u\|_2$. Hence
\begin{equation*}
\|u\|_{H^{\alpha}}=\bigg(\int_{\mathbb{R}^3}(|(-\Delta)^{\frac{\alpha}{2}}u(x)|^2+|u(x)|^2)\,{\rm d}x\bigg)^{\frac{1}{2}},\quad \forall u\in H^{\alpha}(\mathbb{R}^3).
\end{equation*}
We denote $\|\cdot\|$ by $\|\cdot\|_{H^{\alpha}}$ in the sequel for convenience.

In terms of finite difference, the fractional Sobolev space $H^{\alpha}(\mathbb{R}^3)$ also can be defined as follows
\begin{equation*}
H^{\alpha}(\mathbb{R}^3)=\Big\{u\in L^2(\mathbb{R}^3)\,\,\Big|\,\,\frac{|u(x)-u(y)|}{|x-y|^{\frac{3}{2}+\alpha}}\in L^2(\mathbb{R}^3\times\mathbb{R}^3)\Big\}
\end{equation*}
endowed with the natural norm
\begin{equation*}
\|u\|=\bigg(\int_{\mathbb{R}^3}|u|^2\,{\rm d}x+\int_{\mathbb{R}^3}\int_{\mathbb{R}^3}\frac{|u(x)-u(y)|^2}{|x-y|^{3+2\alpha}}\,{\rm d}x\,{\rm d}y\bigg)^{\frac{1}{2}}.
\end{equation*}

Indeed, in view of Proposition 3.4 and Proposition 3.6 in \cite{NPV}, we have
\begin{equation*}
\|(-\Delta)^{\frac{\alpha}{2}}u\|_2^2=\int_{\mathbb{R}^3}|\xi|^{2\alpha}|\widehat{u}(\xi)|^2\,{\rm d}\xi=\frac{1}{C(\alpha)}\int_{\mathbb{R}^3}\int_{\mathbb{R}^3}\frac{|u(x)-u(y)|^2}{|x-y|^{3+2\alpha}}\,{\rm d}x\,{\rm d}y.
\end{equation*}

It is well known that $H^{\alpha}(\mathbb{R}^3)$ is continuously embedded into $L^p(\mathbb{R}^3)$ for $2\leq r\leq 2_{\alpha}^{\ast}$ ($2_{\alpha}^{\ast}=\frac{6}{3-2\alpha}$), and is locally compactly embedded into $L_{loc}^r(\mathbb{R}^3)$ for $1\leq r<2_{\alpha}^{\ast}$.

For any $\varepsilon>0$, let $H_{\varepsilon}=\{u\in H^s(\mathbb{R}^3)\,|\, \int_{\mathbb{R}^3}V(\varepsilon x)|u|^2\,{\rm d}x<\infty\}$ be the Sobolev space endowed with the norm
\begin{equation*}
\|u\|_{\varepsilon}=\Big(\int_{\mathbb{R}^3}(|(-\Delta)^{\frac{s}{2}}u|^2+V(\varepsilon x)|u|^2)\,{\rm d}x\Big)^{\frac{1}{2}}.
\end{equation*}
Clearly, by the assumption $(V)$, $\|\cdot\|_{\varepsilon}$ and $\|\cdot\|$ are equivalent norm on $H_{\varepsilon}$ uniformly for $\varepsilon>0$. Moreover, $H_{\varepsilon}$ is continuously embedded into $L^r(\mathbb{R}^3)$ for $2\leq r\leq 2_s^{\ast}$ and locally compact embedded into $L_{loc}^r(\mathbb{R}^3)$ for $1\leq r<2_s^{\ast}$.

\subsection{Formulation of Problem \eqref{main} and preliminaries}

By $(f_0)$ and $(f_1)$, for any $\varepsilon>0$, there exists $C_{\varepsilon}>0$ such that
\begin{equation}\label{equ2-2}
f(u)\leq \varepsilon|u|^2+C_{\varepsilon}|u|^{p-1}, F(u)\leq \varepsilon |u|^4+C_{\varepsilon}|u|^{p}\quad \text{for all}\,\, u\in\mathbb{R}.
\end{equation}
By $(f_1)$ and $(f_2)$, one can easily check that
\begin{equation}\label{equ2-3}
(\frac{1}{4}f(t)t-F(t))'>0, \quad f'(u)u^2-3f(u)u>0,\quad 0<4F(u)<f(u)u,\quad \text{for any}\,\, u\neq0.
\end{equation}

For problem \eqref{main1}, we first apply the usual "reduction" argument to reduce it to a single equation involving just $u$.

From \cite{Teng3}, the author has proved that if $4s+2t\geq3$, for each $u\in H^s(\mathbb{R}^3)$, the Lax-Milgram theorem implies that there exists a unique $\phi_u^t\in\mathcal{D}^{t,2}(\mathbb{R}^3)$ such that
\begin{equation*}
\int_{\mathbb{R}^3}(-\Delta)^{\frac{t}{2}}\phi_u^t(-\Delta)^{\frac{t}{2}}v\,{\rm d}x=\int_{\mathbb{R}^3}u^2v\,{\rm d}x,\quad \forall v\in\mathcal{D}^{t,2}(\mathbb{R}^3),
\end{equation*}
that is $\phi_u^t$ is a weak solution of
\begin{equation*}
(-\Delta)^t\phi_u^t=u^2,\quad x\in\mathbb{R}^3
\end{equation*}
and the representation formula holds
\begin{equation*}
\phi_u^t(x)=c_t\int_{\mathbb{R}^3}\frac{u^2(y)}{|x-y|^{3-2t}}\,{\rm d}y,\quad x\in\mathbb{R}^3,\quad c_t=\pi^{-\frac{3}{2}}2^{-2t}\frac{\Gamma(\frac{3-2t}{2})}{\Gamma(t)},
\end{equation*}
which is called $t$-Riesz potential.

Substituting $\phi_u^t$ in \eqref{main1}, it reduces to a single equation, i.e., the fractional Schr\"{o}dinger equation with a non-local term $\phi_u^t u$:
\begin{equation}\label{equ2-4}
(-\Delta)^su+V(\varepsilon x)u+\phi_u^tu=K(\varepsilon x)f(u)+Q(\varepsilon x)|u|^{2_s^{\ast}-2}u,\quad x\in\mathbb{R}^3,
\end{equation}
whose solutions can be obtained by looking for critical points of the functional $I_{\varepsilon}: H_{\varepsilon}\rightarrow\mathbb{R}$ defined by
\begin{align*}
I_{\varepsilon}(u)=\frac{1}{2}\int_{\mathbb{R}^3}\Big(|(-\Delta)^{\frac{s}{2}}u|^2+V(\varepsilon x)u^2\Big)\,{\rm d}x+\frac{1}{4}\int_{\mathbb{R}^3}&\phi_u^tu^2\,{\rm d}x-\int_{\mathbb{R}^3}K(\varepsilon x)F(u)\,{\rm d}x\\
&-\frac{1}{2_s^{\ast}}\int_{\mathbb{R}^3}Q(\varepsilon x)|u(x)|^{2_s^{\ast}}\,{\rm d}x
\end{align*}
which is well defined in $H_{\varepsilon}$ and $I_{\varepsilon}\in C^1(H_{\varepsilon},\mathbb{R})$. Moreover,
\begin{equation*}
\langle I'_{\varepsilon}(u),v\rangle=\int_{\mathbb{R}^3}\Big((-\Delta)^{\frac{s}{2}}u(-\Delta)^{\frac{s}{2}}v+V(\varepsilon x)uv+\phi_u^tuv-K(\varepsilon x)f(u)v-Q(\varepsilon x)|u|^{2_s^{\ast}-2}uv\Big)\,{\rm d}x.
\end{equation*}
\begin{definition}\label{def2-1}
(1) We call $(u_{\varepsilon},\phi_{\varepsilon})\in H_{\varepsilon}\times \mathcal{D}^{t,2}(\mathbb{R}^3)$ is a weak solution of system \eqref{main} if $u_{\varepsilon}$ is a weak solution of problem \eqref{equ2-4}.\\
(2) We call $u\in H_{\varepsilon}$ is a weak solution of \eqref{equ2-4} if
\begin{equation*}
\int_{\mathbb{R}^3}\Big((-\Delta)^{\frac{s}{2}}u(-\Delta)^{\frac{s}{2}}v+V(\varepsilon x)uv+\phi_u^tuv-K(\varepsilon x)f(u)v-Q(\varepsilon x)|u|^{2_s^{\ast}-2}uv\Big)\,{\rm d}x=0,
\end{equation*}
for any $v\in H_{\varepsilon}$.
\end{definition}
Obviously, the weak solutions of \eqref{equ2-4} are the critical points of $I_{\varepsilon}$.
Now let us summarize some properties of $\phi_u^t$.
\begin{lemma}\label{lem2-1}
For every $u\in H_{\varepsilon}$ with $4s+2t\geq3$, define $\Phi(u)=\phi_u^t\in \mathcal{D}^{t,2}(\mathbb{R}^3)$, where $\phi_u^t$ is the unique solution of equation $(-\Delta)^t\phi=u^2$. Then there hold:\\
$(i)$ If $u_n\rightharpoonup u$ in $H_{\varepsilon}$, then $\Phi(u_n)\rightharpoonup\Phi(u)$ in $\mathcal{D}^{t,2}(\mathbb{R}^3)$;\\
$(ii)$ $\Phi(tu)=t^2\Phi(u)$ for any $t\in\mathbb{R}$;\\
$(iii)$ For $u\in H_{\varepsilon}$, one has
\begin{equation*}
\|\Phi(u)\|_{\mathcal{D}^{t,2}}\leq C\|u\|_{\frac{12}{3+2t}}^2\leq C\|u\|_{\varepsilon}^2,\quad \int_{\mathbb{R}^3}\Phi(u)u^2\,{\rm d}x\leq C\|u\|_{\frac{12}{3+2t}}^4\leq C\|u\|_{\varepsilon}^4,
\end{equation*}
where constant $C$ is independent of $u$;\\
$(iv)$ Let $4s+2t>3$, if $u_n\rightharpoonup u$ in $H_{\varepsilon}$ and $u_n\rightarrow u$ in $L^r(\mathbb{R}^3)$ for $2\leq r<2_s^{\ast}$, then
\begin{equation*}
\int_{\mathbb{R}^3}\phi_{u_n}^tu_nv\,{\rm d}x\rightarrow\int_{\mathbb{R}^3}\phi_{u}^tuv\,{\rm d}x\quad \text{for any}\,\, v\in H_{\varepsilon}\quad\text{and}\quad\int_{\mathbb{R}^3}\phi_{u_n}^tu_n^2\,{\rm d}x\rightarrow\int_{\mathbb{R}^3}\phi_{u}^tu^2\,{\rm d}x;
\end{equation*}
$(v)$ For $u, v\in H_{\varepsilon}$, there holds
\begin{equation*}
\|\Phi(u)-\Phi(v)\|_{\mathcal{D}^{t,2}}\leq C(\|u\|_{\varepsilon}+\|v\|_{\varepsilon})(\|u-v\|_{\varepsilon}).
\end{equation*}
\end{lemma}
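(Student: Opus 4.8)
The plan is to establish the five properties of the map $\Phi(u)=\phi_u^t$ by combining the variational (Lax-Milgram) characterization of $\phi_u^t$ with the explicit Riesz-potential representation, the Hardy--Littlewood--Sobolev inequality, and standard weak-convergence arguments. Throughout I will use that $4s+2t\geq 3$ guarantees $H^s(\mathbb{R}^3)\hookrightarrow L^{12/(3+2t)}(\mathbb{R}^3)$, since the condition $4s+2t\geq 3$ is equivalent to $\frac{12}{3+2t}\leq 2_s^\ast$; this is exactly the embedding needed to make the quadratic form $u\mapsto\int u^2 v$ continuous on $\mathcal{D}^{t,2}$ and hence to apply Lax-Milgram in the first place.

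For $(iii)$, I would test the defining identity $\int(-\Delta)^{t/2}\phi_u^t(-\Delta)^{t/2}v=\int u^2 v$ with $v=\phi_u^t$, getting $\|\Phi(u)\|_{\mathcal{D}^{t,2}}^2=\int u^2\phi_u^t$. By the Sobolev embedding $\mathcal{D}^{t,2}(\mathbb{R}^3)\hookrightarrow L^{2_t^\ast}(\mathbb{R}^3)$ with $2_t^\ast=\frac{6}{3-2t}$ and H\"older's inequality (the conjugate exponent of $2_t^\ast$ being $\frac{6}{3+2t}$, so that $u^2\in L^{6/(3+2t)}$ precisely when $u\in L^{12/(3+2t)}$), one bounds $\int u^2\phi_u^t\leq \|u\|_{12/(3+2t)}^2\|\phi_u^t\|_{2_t^\ast}\leq C\|u\|_{12/(3+2t)}^2\|\Phi(u)\|_{\mathcal{D}^{t,2}}$, which yields the first inequality; the norm-equivalence $\|\cdot\|_\varepsilon\sim\|\cdot\|$ plus the embedding gives the $\|u\|_\varepsilon^2$ bound, and squaring produces the estimate for $\int\Phi(u)u^2$. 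Property $(ii)$ is immediate from uniqueness: $t^2\phi_u^t$ solves $(-\Delta)^t\psi=(tu)^2$, so $\phi_{tu}^t=t^2\phi_u^t$. For $(v)$, I would subtract the two defining identities to get $\int(-\Delta)^{t/2}(\phi_u^t-\phi_v^t)(-\Delta)^{t/2}w=\int(u^2-v^2)w=\int(u-v)(u+v)w$, test with $w=\phi_u^t-\phi_v^t$, and estimate the right side by H\"older (splitting $u^2-v^2=(u-v)(u+v)$, putting $u-v$ and $u+v$ in $L^{12/(3+2t)}$ and $w$ in $L^{2_t^\ast}$) followed by the embeddings, exactly as in $(iii)$.

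The weak-continuity statements $(i)$ and $(iv)$ are where the real work lies. For $(i)$: if $u_n\rightharpoonup u$ in $H_\varepsilon$, then $(\phi_{u_n}^t)$ is bounded in $\mathcal{D}^{t,2}$ by $(iii)$, so up to a subsequence $\phi_{u_n}^t\rightharpoonup\psi$ in $\mathcal{D}^{t,2}$; passing to the limit in $\int(-\Delta)^{t/2}\phi_{u_n}^t(-\Delta)^{t/2}v=\int u_n^2 v$ for fixed $v\in C_0^\infty$ — using local compactness $u_n\to u$ in $L^2_{loc}$ to handle the right-hand side — identifies $\psi=\phi_u^t$; a standard subsequence argument upgrades to convergence of the whole sequence. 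For $(iv)$, with the strict inequality $4s+2t>3$ (so the embedding $H^s\hookrightarrow L^{12/(3+2t)}$ is into a subcritical exponent and $u_n\to u$ there under the hypothesis), I would write $\int\phi_{u_n}^t u_n v-\int\phi_u^t u v=\int(\phi_{u_n}^t-\phi_u^t)u_n v+\int\phi_u^t(u_n-u)v$; the second term vanishes since $\phi_u^t u v\in L^1$ can serve as a test function and $u_n\rightharpoonup u$, while the first is controlled by $\|\phi_{u_n}^t-\phi_u^t\|_{2_t^\ast}\|u_n\|_{12/(3+2t)}\|v\|_{12/(3+2t)}$ once one shows $\phi_{u_n}^t\to\phi_u^t$ strongly in $\mathcal{D}^{t,2}$ — which follows from $(v)$ together with the strong $L^{12/(3+2t)}$ convergence of $u_n$. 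The convergence $\int\phi_{u_n}^t u_n^2\to\int\phi_u^t u^2$ follows the same pattern. I expect the main obstacle to be the careful bookkeeping of exponents in $(iv)$: one must be sure that the strong convergence $u_n\to u$ holds in the precise Lebesgue space $L^{12/(3+2t)}$ that the HLS/H\"older estimates require, which is why the strict inequality $4s+2t>3$ (rather than the equality allowed elsewhere) is imposed in that part of the statement.
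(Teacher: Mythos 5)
Your proposal is essentially correct and follows the same general strategy as the paper for the parts the paper actually proves, namely $(iv)$ and $(v)$ (for $(i)$--$(iii)$ the paper simply cites \cite{Teng3}). Your treatment of $(ii)$, $(iii)$, $(v)$ and the sketch of $(i)$ match the standard arguments and the paper's; in particular the proof of $(v)$ by testing the defining identity with $\phi_u^t-\phi_v^t$, factoring $u^2-v^2$, and applying H\"older plus the embedding $H_\varepsilon\hookrightarrow L^{12/(3+2t)}$ is exactly what the paper does.

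The one place you should be more careful is $(iv)$. You decompose
\[
\int\phi_{u_n}^t u_n v-\int\phi_u^t u v=\int(\phi_{u_n}^t-\phi_u^t)u_n v+\int\phi_u^t(u_n-u)v,
\]
and for the first term you invoke ``$\phi_{u_n}^t\to\phi_u^t$ strongly in $\mathcal{D}^{t,2}$, which follows from $(v)$ together with $u_n\to u$ in $L^{12/(3+2t)}$.'' But $(v)$ as stated bounds $\|\Phi(u_n)-\Phi(u)\|_{\mathcal{D}^{t,2}}$ by $\|u_n-u\|_\varepsilon$, and under the hypotheses of $(iv)$ you only have $u_n\rightharpoonup u$ in $H_\varepsilon$, so $\|u_n-u\|_\varepsilon$ need not tend to zero. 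What you actually need is the sharper intermediate inequality
\[
\|\Phi(u_n)-\Phi(u)\|_{\mathcal{D}^{t,2}}\leq C\big(\|u_n\|_{12/(3+2t)}+\|u\|_{12/(3+2t)}\big)\|u_n-u\|_{12/(3+2t)},
\]
which is indeed established inside the proof of $(v)$ (one line before passing to the $\|\cdot\|_\varepsilon$ norms) but is strictly stronger than the statement of $(v)$. So either cite that intermediate step explicitly or re-derive it. The paper avoids this dependency altogether: it writes the difference $\int(\phi_{u_n}^t-\phi_u^t)uv$ as a double integral via the Riesz representation, applies a Cauchy--Schwarz bound by $(\phi_u^t)^{1/2}(\phi_v^t)^{1/2}$, and then H\"older, so it never needs the $\mathcal{D}^{t,2}$-continuity from $(v)$. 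Both routes work; yours is shorter provided you quote the right estimate. A small additional nit: you write that the second term vanishes ``since $\phi_u^t uv\in L^1$ can serve as a test function,'' but what is really needed is that $\phi_u^t v$ lies in the dual exponent $L^{12/(9-2t)}$ (or, more simply, one can just use the strong $L^{12/(3+2t)}$ convergence of $u_n-u$ and H\"older, which is cleaner and what the paper does).
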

\begin{proof}
We only need to check that $(vi)$ and $(v)$ hold, the proof of others can be found in \cite{Teng3}.

$(iv)$ By H\"{o}lder's inequality and $4s+2t>3$ implying that $\frac{12}{3+2t}<\frac{6}{3-2s}$, we have that
\begin{align*}
\Big|\int_{\mathbb{R}^3}\phi_{u_n}^tu_nv\,{\rm d}x&-\int_{\mathbb{R}^3}\phi_{u}^tuv\,{\rm d}x\Big|=\Big|\int_{\mathbb{R}^3}\int_{\mathbb{R}^3}\frac{u(y)v(y)}{|x-y|^{3-2t}}(u_n^2(x)-u^2(x))\,{\rm d}x\,{\rm d}y\\
&+\int_{\mathbb{R}^3}\phi_{u_n}^t(u_n-u)v\,{\rm d}x\Big|\\
&\leq\int_{\mathbb{R}^3}(\phi_{u}^t)^{\frac{1}{2}}(\phi_{v}^t)^{\frac{1}{2}}|u_n^2(x)-u^2(x)|\,{\rm d}x+\|\phi_{u_n}^t\|_{2_t^{\ast}}\|u_n-u\|_{\frac{12}{3+2t}}\|v\|_{\frac{12}{3+2t}}\\
&\leq\|\phi_{u}^t\|_{2_t^{\ast}}^{\frac{1}{2}}\|\phi_{v}^t\|_{2_t^{\ast}}^{\frac{1}{2}}(\|u_n\|_{\frac{12}{3+2t}}+\|u\|_{\frac{12}{3+2t}})\|u_n-u\|_{\frac{12}{3+2t}}\\
&+\|\phi_{u_n}^t\|_{2_t^{\ast}}\|u_n-u\|_{\frac{12}{3+2t}}\|v\|_{\frac{12}{3+2t}}\rightarrow0
\end{align*}
for any $v\in H_{\varepsilon}$. For the second part, using the similar argument, we have
\begin{align*}
\Big|\int_{\mathbb{R}^3}\phi_{u_n}^tu_n^2\,{\rm d}x&-\int_{\mathbb{R}^3}\phi_{u}^tu^2\,{\rm d}x\Big|=\Big|\int_{\mathbb{R}^3}\int_{\mathbb{R}^3}\frac{(u_n(x)^2u_n(y)^2-u(x)^2u(y)^2)}{|x-y|^{3-2t}}\,{\rm d}x\,{\rm d}y\Big|\\
&=\Big|\int_{\mathbb{R}^3}\phi_{u_n}^t(u_n^2-u^2)\, {\rm d}x+\int_{\mathbb{R}^3}\phi_{u}^t(u_n^2-u^2)\, {\rm d}x\Big|\\
&\leq\|\phi_{u_n}^t\|_{2_t^{\ast}}\|u_n^2-u^2\|_{\frac{6}{3+2t}}+\|\phi_{u}^t\|_{2_t^{\ast}}\|u_n^2-u^2\|_{\frac{6}{3+2t}}\\
&\leq C\Big(\|u_n\|_{\varepsilon}^2+\|u\|_{\varepsilon}^2\Big)\Big(\|u_n\|_{\frac{12}{3+2t}}+\|u\|_{\frac{12}{3+2t}}\Big)\|u_n-u\|_{\frac{12}{3+2t}}\\
&\rightarrow0.
\end{align*}

$(v)$ By the definition of $\phi_u^t$ and $\phi_v^t$, using H\"{o}lder's inequality and $4s+2t\geq3$ implying that $H_{\varepsilon}\hookrightarrow L^{\frac{12}{3+2t}}(\mathbb{R}^3)$, we have that
\begin{align*}
\|\Phi(u)-\Phi(v)\|_{\mathcal{D}^{t,2}}^2&=\int_{\mathbb{R}^3}|(-\Delta)^{\frac{s}{2}}(\phi_u^t-\phi_v^t)|^2\,{\rm d}x=\int_{\mathbb{R}^3}(u^2-v^2)(\phi_u^t-\phi_v^t)\,{\rm d}x\\
&\leq C\|\phi_u^t-\phi_v^t\|_{\mathcal{D}^{t,2}}\Big(\int_{\mathbb{R}^3}|u^2-v^2|^{\frac{6}{3+2t}}\,{\rm d}x\Big)^{\frac{3+2t}{6}}\\
&\leq C\|\Phi(u)-\Phi(v)\|_{\mathcal{D}^{t,2}}\Big(\int_{\mathbb{R}^3}(|u||u-v|+|v||u-v|)^{\frac{6}{3+2t}}\,{\rm d}x\Big)^{\frac{3+2t}{6}}\\
&\leq C\|\Phi(u)-\Phi(v)\|_{\mathcal{D}^{t,2}}\Big(\|u\|_{\frac{12}{3+2t}}+\|v\|_{\frac{12}{3+2t}}\Big)\|u-v\|_{\frac{12}{3+2t}}\\
&\leq C\|\Phi(u)-\Phi(v)\|_{\mathcal{D}^{t,2}}\Big(\|u\|_{\varepsilon}+\|v\|_{\varepsilon}\Big)\|u-v\|_{\varepsilon}.
\end{align*}
\end{proof}
The minimax level of the autonomous equation associated with equation \eqref{equ2-4}
\begin{equation}\label{equ2-5}
(-\Delta)^su+\nu u+\phi_u^t u=\kappa f(u)+\mu|u|^{2_s^{\ast}-2}u, \quad \text{in}\,\,\mathbb{R}^3, \\
\end{equation}
plays an important role in the proof of compactness of $(PS)$ sequence and concentration behavior of solutions, where $\mu,\nu,\kappa>0$ are arbitrary positive constants and $\phi_u^t=\int_{\mathbb{R}^3}\frac{u^2(y)}{|x-y|^{3-2t}}\,{\rm d}x$.
For $\nu>0$, let
\begin{equation*}
E_{\nu}=\{u\in H^s(\mathbb{R}^3)\,\Big|\,\, \int_{\mathbb{R}^3}\nu |u|^2\,{\rm d}x<\infty\}
\end{equation*}
be a Sobolev space endowed with the norm
\begin{equation*}
\|u\|_{E_{\nu}}=\Big(\int_{\mathbb{R}^3}\Big(|(-\Delta)^{\frac{s}{2}}u|^2+\nu|u|^2\Big)\,{\rm d}x\Big)^{\frac{1}{2}}.
\end{equation*}
In fact, the Sobolev space $H_{\varepsilon}=E_{\nu}=H^s(\mathbb{R}^3)$ for any $\varepsilon>0$ and $\nu>0$.

The energy functional $I_{\nu,\kappa,\mu}: E_{\nu}\rightarrow\mathbb{R}$ is given by
\begin{align*}
I_{\nu,\kappa,\mu}(u)=\frac{1}{2}\int_{\mathbb{R}^3}(|(-\Delta)^{\frac{s}{2}}u|^2+\nu u^2)\,{\rm d}x+&\frac{1}{4}\int_{\mathbb{R}^3}\phi_u^tu^2\,{\rm d}x-\kappa\int_{\mathbb{R}^3} F(u)\,{\rm d}x\\
&-\frac{\mu}{2_s^{\ast}}\int_{\mathbb{R}^3}|u(x)|^{2_s^{\ast}}\,{\rm d}x.
\end{align*}
It is easy to see that $I_{\nu,\kappa,\mu}\in C^1(E_{\nu},\mathbb{R})$ and
\begin{align*}
\langle I_{\nu,\kappa,\mu}'(u),\varphi\rangle=\int_{\mathbb{R}^3}\Big((-\Delta)^{\frac{s}{2}}u(-\Delta)^{\frac{s}{2}}\varphi+\nu u\varphi\Big)\,{\rm d}x+&\int_{\mathbb{R}^3}\phi_u^tu\varphi\,{\rm d}x-\kappa\int_{\mathbb{R}^3}f(u)\varphi\,{\rm d}x\\
&-\mu\int_{\mathbb{R}^3}|u(x)|^{2_s^{\ast}-2}u\varphi\,{\rm d}x,
\end{align*}
for any $u,\varphi\in E_{\nu}$. Moreover, the critical points of $I_{\nu,\kappa,\mu}$ in $E_{\nu}$ are weak solutions of equation \eqref{equ2-5}.

In section 3, we will apply the concentration-compactness principle of P. L. Lions \cite{Lions1,Lions2} and vanishing Lemma \cite{Sec} to prove the compactness of $(PS)_c$ sequence of $I_{\varepsilon}$ on some low energy level. We first recall these results as follows.
\begin{proposition}\label{pro2-1}
Let $\rho_n(x)\in L^1(\mathbb{R}^N)$ be a non-negative sequence satisfying
\begin{equation*}
\int_{\mathbb{R}^N}\rho_n(x)\,{\rm d}x=l>0.
\end{equation*}
Then there exists a subsequence, still denoted by $\{\rho_n(x)\}$ such that one of the following cases occurs.\\
(i) (compactness) there exists $y_n\in\mathbb{R}^N$, such that for any $\varepsilon>0$, exists $R>0$ such that
\begin{equation*}
\int_{B_R(y_n)}\rho_n(x)\,{\rm d}x\geq l-\varepsilon,\quad n=1,2,\cdots.
\end{equation*}
(ii) (vanishing) for any fixed $R>0$, there holds
\begin{equation*}
\lim_{n\rightarrow\infty}\sup_{y\in\mathbb{R}^N}\int_{B_R(y)}\rho_n(x)\,{\rm d}x=0.
\end{equation*}
(iii) (dichotomy) there exists $\alpha\in(0,l)$ such that for any $\varepsilon>0$, there exists $n_0\geq1$, $\rho_n^{(1)}$, $\rho_n^{(2)}\in L^1(\mathbb{R}^N)$, for $n\geq n_0$, there holds
\begin{equation*}
\|\rho_n-(\rho_n^{(1)}+\rho_n^{(2)})\|_{L^1(\mathbb{R}^N}<\varepsilon,\quad \Big|\int_{\mathbb{R}^N}\rho_n^{(1)}(x)\,{\rm d}x-\alpha\Big|<\varepsilon,\quad \Big|\int_{\mathbb{R}^N}\rho_n^{(2)}(x)\,{\rm d}x-(l-\alpha)\Big|<\varepsilon
\end{equation*}
and
\begin{equation*}
{\rm dist}({\rm supp}\rho_n^{(1)},{\rm supp}\rho_n^{(2)})\rightarrow\infty,\quad \text{as}\,\, n\rightarrow\infty.
\end{equation*}
\end{proposition}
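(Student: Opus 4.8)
The plan is to follow the classical argument of P.\,L.\ Lions built around the L\'evy concentration functions. First I would introduce, for each $n$, the nondecreasing function
\begin{equation*}
Q_n(R):=\sup_{y\in\mathbb{R}^N}\int_{B_R(y)}\rho_n(x)\,{\rm d}x,\qquad R>0,
\end{equation*}
which satisfies $0\le Q_n(R)\le l$ for every $R$, and $Q_n(R)\to l$ as $R\to\infty$ because each fixed $\rho_n$ lies in $L^1(\mathbb{R}^N)$. Since the functions $Q_n$ are monotone and uniformly bounded, Helly's selection theorem provides a subsequence (not relabelled) along which $Q_n(R)\to Q(R)$ at every continuity point of the nondecreasing limit $Q$, and hence, after fixing the value of $Q$ at its at most countably many jumps, for all $R>0$. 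Set $\alpha:=\lim_{R\to\infty}Q(R)\in[0,l]$. The three alternatives in the statement will correspond exactly to $\alpha=0$ (vanishing), $\alpha=l$ (compactness), and $0<\alpha<l$ (dichotomy).

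If $\alpha=0$, then $Q\equiv0$, so $Q_n(R)\to0$ for each fixed $R$, which is precisely the vanishing alternative. If $\alpha=l$, I would fix a small $\varepsilon_0\in(0,l/2)$, choose $R_0$ with $Q(R_0)>l-\tfrac{\varepsilon_0}{2}$, and for $n$ large pick a center $y_n$ with $\int_{B_{R_0}(y_n)}\rho_n>l-\varepsilon_0$ (the finitely many remaining indices receive centers by tightness of the corresponding $\rho_n\in L^1$). To see that this one sequence $(y_n)$ serves every $\varepsilon>0$: given $\varepsilon\in(0,\varepsilon_0)$ choose $R_\varepsilon\ge R_0$ with $Q(R_\varepsilon)>l-\tfrac{\varepsilon}{2}$ and, for $n$ large, $z_n$ with $\int_{B_{R_\varepsilon}(z_n)}\rho_n>l-\varepsilon$; if $|y_n-z_n|>R_0+R_\varepsilon$ the two balls would be disjoint with combined mass $>2(l-\varepsilon_0)>l$, a contradiction, so $B_{R_\varepsilon}(z_n)\subset B_{2R_\varepsilon+R_0}(y_n)$ and $\int_{B_{2R_\varepsilon+R_0}(y_n)}\rho_n>l-\varepsilon$. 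Enlarging $R$ to also absorb the finitely many small indices (again via tightness about the fixed $y_n$) gives alternative (i).

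The only substantive case is $0<\alpha<l$. Here the key step is a diagonal extraction: I would produce $R_n\to\infty$ with
\begin{equation*}
Q_n(R_n)\to\alpha\qquad\text{and}\qquad Q_n(2R_n)-Q_n(R_n)\to0.
\end{equation*}
To do this, for each $k$ pick $r_k\uparrow\infty$ with $\alpha-\tfrac1k<Q(r_k)\le Q(2r_k)\le\alpha$, then indices $N_k\uparrow\infty$ with $|Q_n(r_k)-Q(r_k)|<\tfrac1k$ and $|Q_n(2r_k)-Q(2r_k)|<\tfrac1k$ for $n\ge N_k$, and set $R_n:=r_k$ whenever $N_k\le n<N_{k+1}$. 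Choosing $y_n$ with $\int_{B_{R_n}(y_n)}\rho_n\ge Q_n(R_n)-\tfrac1n$ gives $\int_{B_{R_n}(y_n)}\rho_n\to\alpha$, and since $\int_{B_{R_n}(y_n)}\rho_n\le\int_{B_{2R_n}(y_n)}\rho_n\le Q_n(2R_n)\to\alpha$, the mass in the annulus $B_{2R_n}(y_n)\setminus B_{R_n}(y_n)$ tends to $0$. Then $\rho_n^{(1)}:=\rho_n\mathbf{1}_{B_{R_n}(y_n)}$ and $\rho_n^{(2)}:=\rho_n\mathbf{1}_{\mathbb{R}^N\setminus B_{2R_n}(y_n)}$ satisfy $\int\rho_n^{(1)}\to\alpha$, $\int\rho_n^{(2)}=l-\int_{B_{2R_n}(y_n)}\rho_n\to l-\alpha$, $\|\rho_n-(\rho_n^{(1)}+\rho_n^{(2)})\|_{L^1}=\int_{B_{2R_n}(y_n)\setminus B_{R_n}(y_n)}\rho_n\to0$, and $\mathrm{dist}(\mathrm{supp}\,\rho_n^{(1)},\mathrm{supp}\,\rho_n^{(2)})\ge R_n\to\infty$; taking $\varepsilon$ arbitrary and $n$ large in these $o(1)$ statements yields (iii).

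I expect the diagonal construction of the radii $R_n$ in the dichotomy case to be the one delicate point: the radius must be driven to infinity while keeping negligible the $\rho_n$-mass carried by an annulus of comparable (hence unboundedly wide) width, and this forces the choice of $R_n$ to be coupled to the rate at which $Q_n\to Q$. Everything else---Helly's theorem, the disjointness and mass comparisons, and the bookkeeping for finitely many indices---is routine.
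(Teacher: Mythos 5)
Your argument is correct: it is the classical concentration-function proof of Lions (Lévy concentration functions $Q_n$, Helly selection, and the trichotomy $\alpha=0$, $\alpha=l$, $0<\alpha<l$, with the diagonal choice of radii $R_n\to\infty$ in the dichotomy case), and all the quantitative steps — the disjoint-ball mass comparison in the compactness case, the handling of finitely many small indices by tightness, and the annulus estimate giving the $L^1$ error and the divergence of the support distance — check out. Note that the paper itself does not prove this proposition; it is simply recalled from Lions' papers \cite{Lions1,Lions2}, so your proof supplies the standard argument behind the cited result. The only point to tidy is the convergence of $Q_n$ at the (at most countably many) jump points of $Q$: either extract a further subsequence by a diagonal argument over those points, or simply carry out the whole construction using continuity points of $Q$, which is enough since they are dense.
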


\begin{lemma}\label{lem2-2}
Assume that $\{u_n\}$ is bounded in $H^s(\mathbb{R}^N)$ and it satisfies
\begin{equation*}
\lim_{n\rightarrow+\infty}\sup_{y\in\mathbb{R}^N}\int_{B_R(y)}|u_n(x)|^2\,{\rm d}x=0
\end{equation*}
where $R>0$. Then $u_n\rightarrow0$ in $L^r(\mathbb{R}^N)$ for every $2<r<2_s^{\ast}$.
\end{lemma}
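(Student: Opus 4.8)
The plan is to run the classical concentration--compactness ``vanishing'' argument of P.~L.~Lions, adapted to the fractional Sobolev setting. Fix $r\in(2,2_s^{\ast})$ and set $\delta_n:=\sup_{y\in\mathbb{R}^N}\int_{B_R(y)}|u_n|^2\,{\rm d}x$, so that $\delta_n\to0$ by hypothesis and $\sup_n\|u_n\|_{H^s(\mathbb{R}^N)}\le M<\infty$. First I would cover $\mathbb{R}^N$ by a countable family of balls $\{B_R(y_i)\}_{i\in\mathbb{N}}$ of radius $R$ (for instance centred at the points of a suitably scaled integer lattice) in such a way that $\mathbb{R}^N=\bigcup_i B_R(y_i)$ and every point of $\mathbb{R}^N$ lies in at most $m=m(N)$ of these balls; in particular, for every non-negative measurable $g$ one has $\sum_i\int_{B_R(y_i)}g\le m\int_{\mathbb{R}^N}g$, and likewise $\sum_i[u_n]_{H^s(B_R(y_i))}^2\le m[u_n]_{H^s(\mathbb{R}^N)}^2$ since each double integral over $B_R(y_i)\times B_R(y_i)$ is dominated by the one over $B_R(y_i)\times\mathbb{R}^N$.

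The heart of the proof is a local estimate on each ball. On $B_R(y_i)$ I interpolate the $L^r$ norm between $L^2$ and $L^{2_s^{\ast}}$, $\|u_n\|_{L^r(B_R(y_i))}\le\|u_n\|_{L^2(B_R(y_i))}^{\theta}\|u_n\|_{L^{2_s^{\ast}}(B_R(y_i))}^{1-\theta}$ with $\theta\in(0,1)$ determined by $\frac1r=\frac\theta2+\frac{1-\theta}{2_s^{\ast}}$, and then invoke the fractional Sobolev embedding on the ball, $\|u_n\|_{L^{2_s^{\ast}}(B_R(y_i))}\le C\|u_n\|_{H^s(B_R(y_i))}$, where $\|v\|_{H^s(B_R(y_i))}^2=\|v\|_{L^2(B_R(y_i))}^2+[v]_{H^s(B_R(y_i))}^2$ is the intrinsic fractional norm and $C$ is independent of $i$ by translation invariance (this is the fractional Sobolev inequality on bounded Lipschitz domains, cf.\ \cite{NPV}). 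Writing $A_i:=\|u_n\|_{L^2(B_R(y_i))}^2$, $B_i:=\|u_n\|_{H^s(B_R(y_i))}^2$, $\alpha:=\tfrac{\theta r}{2}$, $\beta:=\tfrac{(1-\theta)r}{2}$, one has $\alpha+\beta=\tfrac r2>1$ and $\tfrac{\alpha}{r/2}+\tfrac{\beta}{r/2}=1$, so raising the local inequality to the power $r$, summing over $i$, and applying Hölder's inequality for series with the conjugate exponents $\tfrac{r/2}{\alpha}$ and $\tfrac{r/2}{\beta}$ yields
\begin{equation*}
\int_{\mathbb{R}^N}|u_n|^r\,{\rm d}x\le C\Big(\sum_i A_i^{r/2}\Big)^{\alpha/(r/2)}\Big(\sum_i B_i^{r/2}\Big)^{\beta/(r/2)}.
\end{equation*}

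To conclude I would estimate $\sum_i A_i^{r/2}\le(\sup_i A_i)^{r/2-1}\sum_i A_i\le\delta_n^{\,r/2-1}\,m\|u_n\|_{L^2}^2$ and $\sum_i B_i^{r/2}\le(\sup_i B_i)^{r/2-1}\sum_i B_i\le M^{r-2}\cdot m M^2$, using $r/2>1$, the bounded-overlap property, and the uniform bound on $\|u_n\|_{H^s}$. Hence $\int_{\mathbb{R}^N}|u_n|^r\,{\rm d}x\le C\,\delta_n^{(r/2-1)\theta}\to0$, which is the assertion. I expect the only genuinely delicate point to be securing the localized fractional Sobolev inequality with a constant uniform in the ball: because of the non-local Gagliardo seminorm one cannot simply restrict the global inequality but must use the embedding for bounded extension domains, and one must check that the seminorms over the covering balls sum, up to the multiplicity constant, to the global seminorm. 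Everything else is routine bookkeeping with Hölder's inequality.
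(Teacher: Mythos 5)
Your argument is correct: the local interpolation plus the fractional Sobolev embedding on balls (with constant depending only on $N,s,R$ by translation invariance, valid since balls are extension domains, cf.\ \cite{NPV}), the bounded-overlap covering, the H\"older-for-series step with exponents $\tfrac{r/2}{\alpha}$, $\tfrac{r/2}{\beta}$, and the two elementary bounds $\sum_i A_i^{r/2}\le(\sup_i A_i)^{r/2-1}\sum_i A_i$ and $\sum_i[u_n]_{H^s(B_i)}^2\le m[u_n]_{H^s(\mathbb{R}^N)}^2$ all check out, yielding $\|u_n\|_{L^r}^r\le C\delta_n^{(r/2-1)\theta}\to0$. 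Note, however, that the paper does not prove this lemma at all: it is simply recalled from the literature (the vanishing lemma of \cite{Sec}, going back to Lions and to Lemma 1.21 in \cite{Willem}). The standard cited proof is organized slightly differently: one first fixes a single exponent $q\in(2,2_s^{\ast})$ chosen so that the interpolation weight of the $H^s$-norm, raised to the power $q$, equals exactly $2$; then summing over the covering gives $\|u_n\|_{L^q}^q\le C\,\delta_n^{(1-\lambda)q/2}\|u_n\|_{H^s}^2\to0$ with no H\"older-for-series step, and the full range $2<r<2_s^{\ast}$ follows afterwards by interpolating between $L^2$ (bounded), $L^q$ (vanishing) and $L^{2_s^{\ast}}$ (bounded). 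Your version proves the whole range in one pass at the cost of the extra series-H\"older bookkeeping; the classical version is marginally lighter on each ball but needs the final interpolation. Either route is a complete, self-contained proof of the statement the paper merely quotes.
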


\section{Compactness}
Define the Nehari manifold associated to the functional $I_{\varepsilon}$ as
\begin{equation*}
\mathcal{N}_{\varepsilon}=\Big\{u\in H_{\varepsilon}\backslash\{0\}\,\Big|\,\, G_{\varepsilon}(u)=0\Big\},
\end{equation*}
where
\begin{align*}
 G_{\varepsilon}(u)=\langle I'_{\varepsilon}(u),u\rangle=&\int_{\mathbb{R}^3}\Big(|(-\Delta)^{\frac{s}{2}}u|^2+V(\varepsilon x)u^2\Big)\,{\rm d}x+\int_{\mathbb{R}^3}\phi_u^tu^2\,{\rm d}x\\
&-\int_{\mathbb{R}^3}K(\varepsilon x)f(u)u\,{\rm d}x-\int_{\mathbb{R}^3}Q(\varepsilon x)|u(x)|^{2_s^{\ast}}\,{\rm d}x.
\end{align*}
Thus, for any $u\in \mathcal{N}_{\varepsilon}$, we have that
\begin{align*}
I_{\varepsilon}(u)=\frac{1}{4}\int_{\mathbb{R}^3}(|(-\Delta)^{\frac{s}{2}}u|^2+V(\varepsilon x)u^2)\,{\rm d}x&+\int_{\mathbb{R}^3}K(\varepsilon x)(\frac{1}{4}f(u)u-F(u))\,{\rm d}x\\
&+\frac{4s-3}{12}\int_{\mathbb{R}^3}Q(\varepsilon x)|u(x)|^{2_s^{\ast}}\,{\rm d}x.
\end{align*}
\begin{remark}\label{rem3-1}
Observing that $s>\frac{3}{4}$ implies that $4s+2t>3$ holds trivially.
\end{remark}
Also, we define the Nehari manifold associated with functional $I_{\mu,\kappa,\nu}$ as follows
\begin{equation*}
\mathcal{N}_{\nu,\kappa,\mu}=\{u\in E_{\nu}\backslash\{0\}\,\,\Big|\,\, G_{\nu,\kappa,\mu}(u)=0\},
\end{equation*}
where
\begin{equation*}
 G_{\nu,\kappa,\mu}(u)=\|u\|_{E_{\nu}}^2+\int_{\mathbb{R}^3}\phi_u^tu^2\,{\rm d}x-\kappa\int_{\mathbb{R}^3}f(u)u\,{\rm d}x-\mu\int_{\mathbb{R}^3}|u(x)|^{2_s^{\ast}}\,{\rm d}x.
\end{equation*}
Particularly, when $\mu=V_{\infty}$, $\kappa=K_{\infty}$, $\mu=Q_{\infty}$, and $\mu=V_0$, $\kappa=K_0$, $\mu=Q_0$, the functional $I_{\infty}$ and $I_0$ are defined as
\begin{align*}
I_{\infty}(u)=\frac{1}{2}\int_{\mathbb{R}^3}(|(-\Delta)^{\frac{s}{2}}u|^2+V_{\infty} u^2)\,{\rm d}x+&\frac{1}{4}\int_{\mathbb{R}^3}\phi_u^tu^2\,{\rm d}x-\int_{\mathbb{R}^3} K_{\infty}F(u)\,{\rm d}x\\
&-\frac{1}{2_s^{\ast}}\int_{\mathbb{R}^3}Q_{\infty}|u(x)|^{2_s^{\ast}}\,{\rm d}x
\end{align*}
and
\begin{align*}
I_{0}(u)=\frac{1}{2}\int_{\mathbb{R}^3}(|(-\Delta)^{\frac{s}{2}}u|^2+V_0 u^2)\,{\rm d}x+&\frac{1}{4}\int_{\mathbb{R}^3}\phi_u^tu^2\,{\rm d}x-\int_{\mathbb{R}^3} K_0F(u)\,{\rm d}x\\
&-\frac{1}{2_s^{\ast}}\int_{\mathbb{R}^3}Q_0|u(x)|^{2_s^{\ast}}\,{\rm d}x.
\end{align*}
Also, we denote the Nehari manifolds by
\begin{equation*}
\mathcal{N}_{\infty}=\{u\in H^s(\mathbb{R}^3)\backslash\{0\}\,\,\Big|\,\,\langle I_{\infty}'(u),u\rangle=0\} \quad \mathcal{N}_0=\{u\in H^s(\mathbb{R}^3)\backslash\{0\}\,\,\Big|\,\,\langle I_0'(u),u\rangle=0\}.
\end{equation*}
In order to find the least energy solutions of problem \eqref{equ2-4} and \eqref{equ2-5}, we define the least energy levels as follows
\begin{equation*}
m_{\varepsilon}=\inf_{u\in\mathcal{N}_{\varepsilon}}I_{\varepsilon}(u),\quad m_{\nu,\kappa,\mu}=\inf_{u\in\mathcal{N}_{\nu,\kappa,\mu}}I_{\nu,\kappa,\mu}(u),\quad m_0=\inf_{u\in\mathcal{N}_0}I_0(u),\quad m_{\infty}=\inf_{u\in\mathcal{N}_{\infty}}I_{\infty}(u).
\end{equation*}
The following lemma describes some properties of the Nehari manifold $\mathcal{N}_{\varepsilon}$, $\mathcal{N}_{\nu,\kappa,\mu}$ and $I_{\varepsilon}$.
\begin{lemma}\label{lem3-1}
Under the assumptions $(V)$, $(Q_0)$, $(K)$ and $(f_0)-(f_2)$, the the following statements hold:\\
$(i)$ $\mathcal{N}_{\varepsilon}$ is a manifold of $C^1$-class diffeomorphic to the unite sphere of $H_{\varepsilon}$;\\
$(ii)$ For every $u\in H_{\varepsilon}\backslash\{0\}$, there exists a unique $t_u>0$ such that $t_uu\in\mathcal{N}_{\varepsilon}$ and
\begin{equation*}
I_{\varepsilon}(t_uu)=\max_{t\geq0}I_{\varepsilon}(tu);
\end{equation*}
$(iii)$ For every $u\in \mathcal{N}_{\varepsilon}$, there exists $C>0$ such that $\|u\|_{\varepsilon}\geq C>0$;\\
$(iv)$ If $\{u_n\}$ is a $(PS)_c$ sequence in $H_{\varepsilon}$, i.e., $I_{\varepsilon}(u_n)\rightarrow c$ and $I_{\varepsilon}'(u_n)\rightarrow0$ in $(H_{\varepsilon})'$ as $n\rightarrow\infty$, then there exists $u\in H_{\varepsilon}$ such that $u_n\rightharpoonup u$ in $H_{\varepsilon}$ and $I_{\varepsilon}'(u)=0$ if $2s+2t>3$;\\
$(v)$ Let $\{u_n\}\subset H_{\varepsilon}$ be a sequence satisfying
\begin{equation*}
\langle I_{\varepsilon}'(u_n),u_n\rangle\rightarrow0\quad \text{and} \quad \int_{\mathbb{R}^3}\Big(K(\varepsilon x)f(u_n)u_n+Q(\varepsilon x)|u_n|^{2_s^{\ast}}\Big)\,{\rm d}x\rightarrow a
\end{equation*}
as $n\rightarrow\infty$, where $a$ is a positive constant. Then, up to a subsequence, there exists $t_n>0$ such that
\begin{equation*}
\langle I_{\varepsilon}'(t_nu_n),t_nu_n\rangle=0\quad \text{and}\quad t_n\rightarrow1 \,\,\text{as}\,\, n\rightarrow\infty;
\end{equation*}
$(vi)$ Let $\{u_n\}$ be a sequence such that $u_n\in \mathcal{N}_{\varepsilon}$ and $I_{\varepsilon}(u_n)\rightarrow m_{\varepsilon}$, then we may assume that $\{u_n\}$ is a $(PS)_{m_{\varepsilon}}$ sequence in $H_{\varepsilon}$.
\end{lemma}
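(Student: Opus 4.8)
The plan is to handle the six statements by standard fibering-map and Nehari-manifold arguments, carried out carefully because of the presence of both the nonlocal term $\phi_u^t u$ and the critical term. For a fixed $u\in H_\varepsilon\setminus\{0\}$ I would introduce the fibering function $g(t):=I_\varepsilon(tu)$ for $t\ge0$. Using $(f_0)$–$(f_1)$, which give (via \eqref{equ2-2}) the bounds $f(u)u\le\varepsilon u^2+C_\varepsilon|u|^p$ with $4<p<2_s^\ast$, one sees that near $t=0$ the quadratic gradient term $\frac{t^2}{2}\|u\|_\varepsilon^2$ dominates, so $g(t)>0$ for small $t$; while for large $t$ the critical term $-\frac{\mu}{2_s^\ast}t^{2_s^\ast}\int Q(\varepsilon x)|u|^{2_s^\ast}$ (together with $\int\phi_u^t u^2$ scaling like $t^4$ by Lemma \ref{lem2-1}(ii)) forces $g(t)\to-\infty$ since $2_s^\ast>4$. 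Hence $g$ attains an interior positive maximum. Uniqueness of the critical point $t_u$: compute $g'(t)/t^3$ and observe that, by $(f_1)$ (the map $f(t)/t^3$ strictly increasing) and by $2_s^\ast>4$, the function $t\mapsto g'(t)/t^3$ is strictly decreasing on $(0,\infty)$, so it has exactly one zero; this gives $(ii)$. Statement $(iii)$ follows from $G_\varepsilon(u)=0$ together with \eqref{equ2-2} and Lemma \ref{lem2-1}(iii): $\|u\|_\varepsilon^2\le\|u\|_\varepsilon^2+\int\phi_u^t u^2=\int(K f(u)u+Q|u|^{2_s^\ast})\le \varepsilon C\|u\|_\varepsilon^2+C(\|u\|_\varepsilon^p+\|u\|_\varepsilon^{2_s^\ast})$, and choosing $\varepsilon$ small and rearranging yields a uniform lower bound $\|u\|_\varepsilon\ge C>0$.

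For $(i)$ I would define the $C^1$ map $H_\varepsilon\setminus\{0\}\ni u\mapsto t_u$ (smoothness from the implicit function theorem applied to $G_\varepsilon(t_uu)=0$, using that $\frac{d}{dt}G_\varepsilon(tu)\big|_{t=t_u}<0$ by the strict monotonicity just established, so the derivative is nondegenerate), and then check that $u\mapsto t_u u$ restricted to the unit sphere $S=\{\|u\|_\varepsilon=1\}$ is a $C^1$ diffeomorphism onto $\mathcal N_\varepsilon$ with inverse $v\mapsto v/\|v\|_\varepsilon$. One also verifies $0$ is not in the closure of $\mathcal N_\varepsilon$ (by $(iii)$) and that $G_\varepsilon$ has nonvanishing differential on $\mathcal N_\varepsilon$, so $\mathcal N_\varepsilon$ is a genuine $C^1$ manifold. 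For $(v)$: from $\langle I_\varepsilon'(u_n),u_n\rangle\to0$ and the second hypothesis, $\|u_n\|_\varepsilon^2+\int\phi_{u_n}^t u_n^2\to a>0$; writing the Nehari condition for $t_nu_n$ as $t_n^2(\|u_n\|_\varepsilon^2)+t_n^4\int\phi_{u_n}^tu_n^2 = t_n^4\cdot(\text{stuff})+\ldots$ — more cleanly, define $h_n(t)=\langle I_\varepsilon'(tu_n),tu_n\rangle/t^2$ and use the same strict-monotonicity / coercivity structure as in $(ii)$ to get a unique $t_n$; the convergence $t_n\to1$ follows because $h_n$ converges (in the relevant uniform sense on compact $t$-intervals, using boundedness of $\{u_n\}$ which itself follows from $\langle I_\varepsilon'(u_n),u_n\rangle\to0$ plus the coercivity of $I_\varepsilon-\frac14\langle I_\varepsilon',\cdot\rangle$ exactly as in the displayed formula for $I_\varepsilon$ on $\mathcal N_\varepsilon$) and its zero at $t=1$ is forced in the limit. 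Statement $(vi)$ is the standard consequence of Ekeland's variational principle applied on the manifold $\mathcal N_\varepsilon$ together with $(i)$: a minimizing sequence for $I_\varepsilon|_{\mathcal N_\varepsilon}$ can be taken to be a $(PS)_{m_\varepsilon}$ sequence for the free functional because $\mathcal N_\varepsilon$ is a natural constraint (the Lagrange multiplier vanishes, using again that $\frac{d}{dt}G_\varepsilon(tu)|_{t=1}<0$ on $\mathcal N_\varepsilon$).

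The one genuinely delicate point is $(iv)$, the claim that a $(PS)_c$ sequence has a weak limit $u$ which is a critical point, under $2s+2t>3$ (note $s>3/4$ already guarantees $4s+2t>3$, Remark \ref{rem3-1}, which is what Lemma \ref{lem2-1}(iv) needs). Here I would first show $\{u_n\}$ is bounded: from $I_\varepsilon(u_n)-\frac14\langle I_\varepsilon'(u_n),u_n\rangle = \frac14\|u_n\|_\varepsilon^2+\int K(\tfrac14 f(u_n)u_n-F(u_n))+\frac{4s-3}{12}\int Q|u_n|^{2_s^\ast}$, every term on the right is nonnegative by \eqref{equ2-3} and $s>3/4$, so $\frac14\|u_n\|_\varepsilon^2\le c+o(1)+o(\|u_n\|_\varepsilon)$, giving boundedness. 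Then pass to a subsequence with $u_n\rightharpoonup u$ in $H_\varepsilon$, $u_n\to u$ in $L^r_{loc}$ for $r<2_s^\ast$ and a.e. For the nonlocal term, Lemma \ref{lem2-1}(i),(iv) give $\int\phi_{u_n}^t u_n\varphi\to\int\phi_u^t u\varphi$ for $\varphi\in H_\varepsilon$ (this is exactly where $4s+2t>3$, i.e. $\frac{12}{3+2t}<2_s^\ast$, is used so that the convergence $u_n\to u$ in $L^{12/(3+2t)}_{loc}$ plus boundedness upgrades to the required convergence against a fixed test function with compact support, then a density/tail argument handles general $\varphi$). The subcritical term $\int Kf(u_n)\varphi\to\int Kf(u)\varphi$ follows from $(f_0)$, $(f_2)$, the growth bound and local compactness; the critical term $\int Q|u_n|^{2_s^\ast-2}u_n\varphi\to\int Q|u|^{2_s^\ast-2}u\varphi$ follows from boundedness of $|u_n|^{2_s^\ast-2}u_n$ in $L^{2_s^\ast/(2_s^\ast-1)}$ and a.e. convergence (Brezis–Lieb type weak-continuity for the test-function pairing). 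Passing to the limit in $\langle I_\varepsilon'(u_n),\varphi\rangle=o(1)$ over a dense set of $\varphi$ then yields $I_\varepsilon'(u)=0$. The main obstacle I anticipate is precisely this handling of the nonlocal term against a general (non-compactly-supported) test function — the clean way is to first establish the identity for $\varphi\in C_0^\infty$, then invoke the density of $C_0^\infty$ in $H_\varepsilon$ together with the uniform bound $\|I_\varepsilon'(u_n)\|_{(H_\varepsilon)'}\to0$ and the boundedness of $\{\phi_{u_n}^t u_n\}$ in the appropriate dual space (from Lemma \ref{lem2-1}(iii)).
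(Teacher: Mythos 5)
Your proposal is correct and, on items $(i)$, $(ii)$, $(iii)$ and $(vi)$, matches the paper's treatment: the paper omits $(ii)$--$(iii)$ as standard and proves $(i)$ by computing $\langle G_{\varepsilon}'(u),u\rangle\leq-2\|u\|_{\varepsilon}^2$ for $u\in\mathcal{N}_{\varepsilon}$, which is exactly the nondegeneracy you extract from the strict monotonicity of $g'(t)/t^3$, and $(vi)$ is the same Ekeland-plus-vanishing-multiplier argument. The genuine difference is in $(iv)$: the paper does not test against compactly supported functions; it proves $\phi_{u_n}^t\rightarrow\phi_u^t$ a.e.\ by splitting the Riesz kernel over $|x-y|\leq R$ and $|x-y|>R$ with exponents chosen using $2s+2t>3$, upgrades this to weak convergence of $\phi_{u_n}^tu_n$ in a suitable $L^{\gamma}$, obtains weak convergence of $Q(\varepsilon x)^{\frac{2_s^{\ast}-1}{2_s^{\ast}}}|u_n|^{2_s^{\ast}-2}u_n$ and of $K(\varepsilon x)f(u_n)$ in appropriate Lebesgue spaces, and pairs these against an arbitrary $\varphi\in H_{\varepsilon}$. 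Your route---weak convergence $\phi_{u_n}^t\rightharpoonup\phi_u^t$ in $\mathcal{D}^{t,2}$ from Lemma \ref{lem2-1}$(i)$, strong local convergence of $u_n$, testing with $\varphi\in C_0^{\infty}$, then density of $C_0^{\infty}$ in $H_{\varepsilon}$---is a legitimate and arguably cleaner alternative that avoids the pointwise-convergence step altogether; just note that Lemma \ref{lem2-1}$(iv)$ as stated requires global strong $L^r$ convergence, so you cannot cite it directly and must carry out the weak--strong pairing you sketch. For $(v)$ the paper argues differently from your uniform-convergence plan: it shows $0<T_1\leq t_n\leq T_2$, passes to $t_n\rightarrow T$, and derives $\int K(\varepsilon x)\big(\frac{f(Tu_n)}{(Tu_n)^3}-\frac{f(u_n)}{u_n^3}\big)u_n^4\,{\rm d}x=(\frac{1}{T^2}-1)a_0+(1-T^{2_s^{\ast}-4})d_0+o_n(1)$, whose sign (via $(f_1)$, $a_0>0$ and Fatou) forces $T=1$. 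Your version is salvageable---the fibre maps $t\mapsto\int K f(tu_n)tu_n$ are equi-Lipschitz on compact $t$-intervals, so Arzel\`a--Ascoli gives uniform subsequential convergence, and the limit inherits strict monotonicity with modulus $a_0(\frac{1}{t_1^2}-\frac{1}{t_2^2})$---but that justification must be supplied, since at fixed $t\neq1$ the quantity $\int K f(tu_n)tu_n$ need not converge on its own; the paper's Fatou argument sidesteps this.
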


\begin{proof}
The proof of $(ii)$ and $(iii)$ is standard, we only to verify the remain conclusions.

$(i)$ Let $u\in\mathcal{N}_{\varepsilon}$, by computation, using \eqref{equ2-3}, we get
\begin{align*}
\langle G_{\varepsilon}'(u),u\rangle&=2\int_{\mathbb{R}^3}\Big(|(-\Delta)^{\frac{s}{2}}u|^2+V(\varepsilon x)u^2\Big)\,{\rm d}x+4\int_{\mathbb{R}^3}\phi_u^tu^2\,{\rm d}x\\
&-\int_{\mathbb{R}^3}K(\varepsilon x)\Big(f'(u)u^2+f(u)u\Big)\,{\rm d}x-2_s^{\ast}\int_{\mathbb{R}^3}Q(\varepsilon x)|u(x)|^{2_s^{\ast}}\,{\rm d}x\\
&=-2\int_{\mathbb{R}^3}\Big(|(-\Delta)^{\frac{s}{2}}u|^2+V(\varepsilon x)u^2\Big)\,{\rm d}x-\int_{\mathbb{R}^3}K(\varepsilon x)\Big(f'(u)u^2-3f(u)u\Big)\,{\rm d}x\\
&-(2_s^{\ast}-4)\int_{\mathbb{R}^3}Q(\varepsilon x)|u(x)|^{2_s^{\ast}}\,{\rm d}x\\
&\leq-2\|u\|_{\varepsilon}^2<0.
\end{align*}
$(iv)$ Let $\{u_n\}\subset H_{\varepsilon}$ be a $(PS)_c$ sequence, it is easy to show that $\{u_n\}$ is bounded in $H_{\varepsilon}$. By the reflexivity of $H_{\varepsilon}$ and using the Sobolev embedding property, up to a subsequence, still denoted by $\{u_n\}$, we may assume that there exists $u\in H_{\varepsilon}$ such that $u_n\rightharpoonup u$ in $H_{\varepsilon}$, $u_n\rightarrow u$ in $L_{loc}^r(\mathbb{R}^3)$ with $1\leq r<2_s^{\ast}$ and $u_n\rightarrow u$ a.e. in $\mathbb{R}^3$. By \eqref{equ2-2}, we see that $f(u)\in L^{\frac{2_s^{\ast}-1}{p-1}}(\mathbb{R}^3)$ because of $2<\frac{2(2_s^{\ast}-1)}{p-1}<2_s^{\ast}-1$. Therefore, $Q(\varepsilon x)^{\frac{2_s^{\ast}-1}{2_s^{\ast}}}|u_n|^{2_s^{\ast}-2}u_n$ and $K(\varepsilon x)f(u_n)$ are bounded in $L^{\frac{2_s^{\ast}}{2_s^{\ast}-1}}(\mathbb{R}^3)$ and $L^{\theta\frac{2_s^{\ast}-1}{p-1}}(\mathbb{R}^3)$ with $\theta\in(\frac{p-1}{2_s^{\ast}-1}\frac{2_s^{\ast}}{2_s^{\ast}-1},\frac{2_s^{\ast}}{2_s^{\ast}-1})$, respectively. Here using $s>\frac{3}{4}$, we can choose $\theta\in(\frac{p-1}{2_s^{\ast}-1}\frac{2_s^{\ast}}{2_s^{\ast}-1},\frac{2_s^{\ast}}{2_s^{\ast}-1})$ such that $2<2\theta\frac{2_s^{\ast}-1}{p-1}<2_s^{\ast}$, $2<\theta(2_s^{\ast}-1)<2_s^{\ast}$ and $2<\frac{\theta(2_s^{\ast}-1)}{\theta(2_s^{\ast}-1)-(p-1)}<2_s^{\ast}$. Using the fact $u_n\rightarrow u$ a..e in $\mathbb{R}^3$, we obtain that
\begin{equation}\label{equ3-1}
Q(\varepsilon x)^{\frac{2_s^{\ast}-1}{2_s^{\ast}}}|u_n|^{2_s^{\ast}-2}u_n\rightharpoonup Q(\varepsilon x)^{\frac{2_s^{\ast}-1}{2_s^{\ast}}}|u|^{2_s^{\ast}-2}u\quad \text{in}\,\,L^{\frac{2_s^{\ast}}{2_s^{\ast}-1}}(\mathbb{R}^3)
\end{equation}
and
\begin{equation}\label{equ3-2}
K(\varepsilon x)f(u_n)\rightharpoonup K(\varepsilon x)f(u)\quad \text{in} \,\,L^{\theta\frac{2_s^{\ast}-1}{p-1}}(\mathbb{R}^3).
\end{equation}
Next, we show that $\phi_{u_n}^t\rightarrow\phi_u^t$ a.e. in $\mathbb{R}^3$. In fact, using $2s+2t>3$ and choosing $\frac{3}{3-2t}<p<\frac{3}{2s}$, $q>\frac{3}{2s}>\frac{3}{3-2t}$ so that $2p',2q'\in(2,2_s^{\ast})$, using H\"{o}lder's inequality, we deduce that
\begin{align*}
|\phi_{u_n}^t (x)-\phi_u^t (x)|&\leq c_t\int_{\mathbb{R}^3}\frac{|u_n^2(y)-u^2(y)|}{|x-y|^{3-2t}}\,{\rm d}y\\
&\leq c_t\Big(\int_{|x-y|\leq R}\frac{1}{|x-y|^{p(3-2t)}}\,{\rm d}y\Big)^{\frac{1}{p}}\Big(\int_{|x-y|\leq R}|u_n^2-u^2|^{p'}\,{\rm d}y\Big)^{\frac{1}{p'}}\\
&+\Big(\int_{|x-y|> R}\frac{1}{|x-y|^{q(3-2t)}}\,{\rm d}y\Big)^{\frac{1}{q}}\Big(\int_{|x-y|> R}|u_n^2-u^2|^{q'}\,{\rm d}y\Big)^{\frac{1}{q'}}
\end{align*}
concluding the pointwise convergence. Owing to $\phi_{u_n}^tu_n$ is bounded in $L^{\gamma}(\mathbb{R}^3)$, where $\gamma$ satisfies $2\leq\frac{\gamma 2_t^{\ast}}{2_t^{\ast}-\gamma}\leq2_s^{\ast}$ and $2\leq\gamma'\leq2_s^{\ast}$ (using $4s+2t\geq3$). Using $\phi_{u_n}^tu_n\rightarrow \phi_u^tu$ a.e. in $\mathbb{R}^3$, we have that
\begin{equation}\label{equ3-3}
\phi_{u_n}^tu_n\rightharpoonup\phi_u^tu\quad \text{in}\,\, L^{\gamma}(\mathbb{R}^3).
\end{equation}
Therefore, combining with \eqref{equ3-1}, \eqref{equ3-2}, \eqref{equ3-3} and using the weakness convergence in $H_{\varepsilon}$, we infer that
\begin{equation*}
\langle I_{\varepsilon}'(u_n),\varphi\rangle-\langle I_{\varepsilon}'(u),\varphi\rangle\rightarrow0,\quad \text{for any}\,\, \varphi\in H_{\varepsilon}.
\end{equation*}
which leads to $I_{\varepsilon}'(u)=0$.

$(v)$ By the assumptions, it is easy to see that $\|u_n\|_{\varepsilon}\neq0$ for large $n$. Using the conclusion $(ii)$, there exists $t_n>0$ such that $t_nu_n\in\mathcal{N}_{\varepsilon}$ i.e., $\langle I_{\varepsilon}(t_nu_n),t_nu_n\rangle=0$. Now we prove that $t_n\rightarrow1$ as $n\rightarrow\infty$. Set
\begin{equation*}
a_n=\|u_n\|_{\varepsilon}^2,\,\, b_n=\int_{\mathbb{R}^3}\phi_{u_n}^tu_n^2\,{\rm d}x,\,\, c_n=\int_{\mathbb{R}^3}K(\varepsilon x)f(u_n)u_n\,{\rm d}x,\,\, d_n=\int_{\mathbb{R}^3}Q(\varepsilon x)|u_n|^{2_s^{\ast}}\,{\rm d}x.
\end{equation*}
By assumption, we have that $c_n+d_n\rightarrow a>0$ as $n\rightarrow\infty$. Passing to a subsequence, we may assume that
\begin{equation*}
a_n\rightarrow a_0,\,\,b_n\rightarrow b_0,\,\,c_n\rightarrow c_0,\,\,d_n\rightarrow d_0.
\end{equation*}
Thus, $a_0+b_0=c_0+d_0=a$ and $a_0>0$ (if not, contradiction with $a>0$). From $\langle I_{\varepsilon}(t_nu_n),t_nu_n\rangle=0$, by \eqref{equ2-2} and $(f_2)$, we have that
\begin{equation*}
t_n^2a_n+t_n^4b_n\geq t_n^qK_{\infty}\int_{\mathbb{R}^3}|u_n|^q\,{\rm d}x+t_n^{2_s^{\ast}}d_n
\end{equation*}
and
\begin{equation*}
t_n^2a_n+t_n^4b_n\leq C\Big(\varepsilon  t_n^2\int_{\mathbb{R}^3}|u_n|^2\,{\rm d}x+t_n^p\int_{\mathbb{R}^3}|u_n|^p\,{\rm d}x\Big)+t_n^{2_s^{\ast}}d_n
\end{equation*}
which imply that there exist $T_1,T_2>0$ such that $0<T_1\leq t_n\leq T_2$. Hence, up to a subsequence, still denoted by $\{t_n\}$, we may assume that $t_n\rightarrow T$ as $n\rightarrow\infty$. Since $\langle I_{\varepsilon}'(t_nu_n),t_nu_n\rangle=0$ and $\langle I_{\varepsilon}'(u_n),u_n\rangle\rightarrow0$, we get
\begin{equation*}
T^2a_n+T^4b_n=\int_{\mathbb{R}^3}K(\varepsilon x)f(Tu_n)Tu_n\,{\rm d}x+T^{2_s^{\ast}}d_n+o_n(1),\quad a_n+b_n=c_n+d_n+o_n(1)
\end{equation*}
which leads to
\begin{align*}
\int_{\mathbb{R}^3}K(\varepsilon x)\Big(\frac{f(Tu_n)}{(Tu_n)^3}-\frac{f(u_n)}{(u_n)^3}\Big)u_n^4\,{\rm d}x&=(\frac{1}{T^2}-1)a_n+(1-T^{2_s^{\ast}-4})d_n+o_n(1)\\
&=(\frac{1}{T^2}-1)a_0+(1-T^{2_s^{\ast}-4})d_0+o_n(1).
\end{align*}
Observing that $(\frac{1}{T^2}-1)a_0+(1-T^{2_s^{\ast}-4})d_0<0$ when $T>1$, but it follows from Fatou's Lemma and $u_n\rightarrow u$ a.e. in $\mathbb{R}^3$ that
\begin{equation*}
\int_{\mathbb{R}^3}K(\varepsilon x)\Big(\frac{f(Tu)}{(Tu)^3}-\frac{f(u)}{u^3}\Big)u^4\,{\rm d}x\geq0
\end{equation*}
which is impossible. Using similar argument, we can get a contradiction when $T<1$. Hence, it is only true that $T=1$.

$(vi)$ Suppose $\{u_n\}$ be a minimizing sequence of $I_{\varepsilon}$ constrained in $\mathcal{N}_{\varepsilon}$. By the Ekeland's variational principle in \cite{Willem} (Theorem 8.5, Page 122), there exists a sequence $\{v_n\}\subset\mathcal{N}_{\varepsilon}$ such that $I_{\varepsilon}(v_n)\rightarrow m_{\varepsilon}$, $\|v_n-u_n\|_{\varepsilon}\rightarrow 0$ and  $I_{\varepsilon}'(v_n)-\lambda_nG_{\varepsilon}'(v_n)\rightarrow 0$ as $n\rightarrow\infty$. Thus, in view of $v_n\in\mathcal{N}_{\varepsilon}$, we have that $\langle I_{\varepsilon}'(v_n),v_n\rangle=\lambda_n\langle G_{\varepsilon}(v_n),v_n\rangle +o_n(1)=o_n(1)$. We assume that $\lim\limits_{n\rightarrow\infty}\langle G_{\varepsilon}(v_n),v_n\rangle=\gamma\leq0$, if $\gamma=0$, owing to the proof of $(i)$, we see that $\|v_n\|_{\varepsilon}\rightarrow0$ as $n\rightarrow\infty$. This yields a contradiction with $m_{\varepsilon}>0$. Therefore, $\lambda_n\rightarrow0$ as $n\rightarrow\infty$ and so $I_{\varepsilon}'(v_n)\rightarrow0$ as $n\rightarrow\infty$. Hence, without loss of generalization, we may assume that $I_{\varepsilon}(u_n)\rightarrow m_{\varepsilon}$ and $I_{\varepsilon}'(u_n)\rightarrow 0$ as $n\rightarrow\infty$, i.e., $\{u_n\}$ is a $(PS)_{m_{\varepsilon}}$ sequence for $I_{\varepsilon}$.
\end{proof}

The functional $I_{\varepsilon}$ satisfies the mountain pass geometry.
\begin{lemma}\label{lem3-2}
Suppose $(V)$, $(K)$, $(Q_0)$ and $(f_0)-(f_2)$ hold, then the functional $I_{\varepsilon}$ has the following properties:\\
$(i)$ there exists $\alpha,\rho>0$ such that $I_{\varepsilon}(u)\geq\alpha$ for $\|u\|_{\varepsilon}=\rho$;\\
$(ii)$ there exists $e\in H_{\varepsilon}$ satisfying $\|e\|_{\varepsilon}>\rho$ such that $I_{\varepsilon}(e)<0$.
\end{lemma}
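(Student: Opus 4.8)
The plan is to verify the two standard mountain pass conditions directly from the explicit form of $I_{\varepsilon}$, exploiting that the Riesz term $\frac14\int_{\mathbb{R}^3}\phi_u^tu^2\,{\rm d}x$ and the primitive $F$ are nonnegative, together with the Sobolev embeddings of $H_{\varepsilon}$ and the fact that $s>\frac34$ forces $2_s^{\ast}>4$.

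For $(i)$, I would start from
\begin{equation*}
I_{\varepsilon}(u)=\frac12\|u\|_{\varepsilon}^2+\frac14\int_{\mathbb{R}^3}\phi_u^tu^2\,{\rm d}x-\int_{\mathbb{R}^3}K(\varepsilon x)F(u)\,{\rm d}x-\frac{1}{2_s^{\ast}}\int_{\mathbb{R}^3}Q(\varepsilon x)|u|^{2_s^{\ast}}\,{\rm d}x,
\end{equation*}
discard the nonnegative term $\frac14\int_{\mathbb{R}^3}\phi_u^tu^2\,{\rm d}x$, and bound $K(\varepsilon x)$, $Q(\varepsilon x)$ by their suprema, which are finite since $K,Q\in C(\mathbb{R}^3)$ have finite limits at infinity. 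Using \eqref{equ2-2}, which gives $F(u)\le\varepsilon|u|^4+C_{\varepsilon}|u|^{p}$, and the continuous embeddings $H_{\varepsilon}\hookrightarrow L^4,L^p,L^{2_s^{\ast}}$ (valid since $2\le 4<p<2_s^{\ast}$ when $s>\frac34$), one obtains
\begin{equation*}
I_{\varepsilon}(u)\ge\frac12\|u\|_{\varepsilon}^2-C_1\|u\|_{\varepsilon}^4-C_2\|u\|_{\varepsilon}^{p}-C_3\|u\|_{\varepsilon}^{2_s^{\ast}}.
\end{equation*}
Since each exponent on the right-hand side exceeds $2$, choosing $\rho>0$ sufficiently small makes the quadratic term dominate, so that $I_{\varepsilon}(u)\ge\alpha>0$ whenever $\|u\|_{\varepsilon}=\rho$.

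For $(ii)$, fix any $w\in H_{\varepsilon}\setminus\{0\}$ with $w\ge0$ and study the curve $t\mapsto I_{\varepsilon}(tw)$ for $t>0$. By Lemma~\ref{lem2-1}$(ii)$ one has $\int_{\mathbb{R}^3}\phi_{tw}^t(tw)^2\,{\rm d}x=t^4\int_{\mathbb{R}^3}\phi_w^tw^2\,{\rm d}x$, and since $F\ge0$ by $(f_0)$ we may discard the term $-\int_{\mathbb{R}^3}K(\varepsilon x)F(tw)\,{\rm d}x\le0$ to get
\begin{equation*}
I_{\varepsilon}(tw)\le\frac{t^2}{2}\|w\|_{\varepsilon}^2+\frac{t^4}{4}\int_{\mathbb{R}^3}\phi_w^tw^2\,{\rm d}x-\frac{Q_{\infty}\,t^{2_s^{\ast}}}{2_s^{\ast}}\|w\|_{2_s^{\ast}}^{2_s^{\ast}},
\end{equation*}
where I used $Q(\varepsilon x)\ge Q_{\infty}>0$ from $(Q_0)$ and $\|w\|_{2_s^{\ast}}>0$. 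Because $s>\frac34$ gives $2_s^{\ast}>4>2$, the last term dominates as $t\to+\infty$, hence $I_{\varepsilon}(tw)\to-\infty$; choosing $t_0$ large with $t_0\|w\|_{\varepsilon}>\rho$ and setting $e=t_0w$ completes the proof.

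There is essentially no serious obstacle here: this is the routine mountain pass geometry check, and the only points that must not be overlooked are that the nonlocal term $\phi_u^tu^2$ and the primitive $F$ have favorable signs (so that they can be thrown away in $(i)$ and $(ii)$ respectively), and that the hypothesis $s>\frac34$ is exactly what yields $2_s^{\ast}>4$, needed both for the $L^4$ embedding in $(i)$ and for the critical term to dominate the $t^4$ Coulomb growth in $(ii)$. Taking $w\ge0$ is also convenient, since it is compatible with later showing that the resulting mountain pass critical point is positive.
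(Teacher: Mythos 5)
Your proof is correct and is precisely the standard mountain pass geometry argument that the paper itself declares "standard and hence is omitted": discarding the nonnegative Coulomb term and using \eqref{equ2-2} with the embeddings for $(i)$, and using $\Phi(tw)=t^2\Phi(w)$, $F\geq0$, $Q(\varepsilon x)\geq Q_{\infty}$ and $2_s^{\ast}>4$ (from $s>\frac34$) for $(ii)$. No gaps; your remarks on the signs of the nonlocal and $F$ terms and on the role of $s>\frac34$ are exactly the relevant points.
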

The proof of Lemma \ref{lem3-2} is standard and hence is omitted. By Lemma \ref{lem3-2} and Theorem 1.15 in \cite{Willem} (Mountain pass theorem without Palais-Smale condition), it follows that there exists a $(PS)_{c_{\varepsilon}}$ sequence $\{u_n\}\subset H_{\varepsilon}$ such that
\begin{equation*}
I_{\varepsilon}(u_n)\rightarrow c_{\varepsilon}\quad\text{and}\quad I'_{\varepsilon}(u_n)\rightarrow0\,\,\text{as}\,\, n\rightarrow\infty,
\end{equation*}
where $c_{\varepsilon}=\inf_{\gamma\in \Gamma_{\varepsilon}}\max_{t\in[0,1]}I_{\varepsilon}(\gamma(t))$,
where $\Gamma_{\varepsilon}=\{\gamma\in C([0,1],H_{\varepsilon})\,\,|\,\, \gamma(0)=0, I_{\varepsilon}(\gamma(1))<0\}$.
Similarly to the arguments in \cite{Rabinowtiz} or \cite{HZ},  by \eqref{equ2-3}, the equivalent characterization of $c_{\varepsilon}$ is given by
\begin{equation*}
c_{\varepsilon}=m_{\varepsilon}=\inf_{u\in H_{\varepsilon}\backslash\{0\}}\max_{t\geq0}I_{\varepsilon}(tu).
\end{equation*}
The following Lemma gives the estimate of the critical value $c_{\varepsilon}$.
\begin{lemma}\label{lem3-3}
Suppose that $(V)$, $(K)$, $(Q_0)$, $(Q_1)$ and $(f_0)-(f_2)$ hold, then the infinimum $c_{\varepsilon}$ satisfies
\begin{equation*}
0<c_{\varepsilon}<\frac{s}{3Q(x_0)^{\frac{3-2s}{2s}}}\mathcal{S}_s^{\frac{3}{2s}},
\end{equation*}
for $\varepsilon$ small enough, where $\mathcal{S}_s$ is the best Sobolev constant defined by \eqref{equ2-1}.
\end{lemma}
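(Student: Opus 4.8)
The plan is to estimate $c_\varepsilon = \inf_{u \in H_\varepsilon \setminus\{0\}} \max_{t\ge 0} I_\varepsilon(tu)$ from above by plugging in a suitable family of test functions concentrated near $x_0 = 0 \in \Theta$ (recall we assumed $0 \in \Theta$, so $V(0) = V_0$, $K(0) = K_0$, $Q(0) = Q_0$). The standard choice is a truncated Aubin–Talenti type extremal for the fractional Sobolev inequality \eqref{equ2-1}: let $U_\delta$ be (a rescaling of) the extremal achieving $\mathcal{S}_s$, set $u_\delta = \eta(x) U_\delta(x)$ where $\eta$ is a fixed cutoff supported in $B_{\rho_0}$ (so that hypothesis $(Q_1)$ is available on the support), and consider $w_\varepsilon(x) = u_\delta(\varepsilon x)$ — or work directly with the scaled problem. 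The positivity $c_\varepsilon > 0$ is immediate from the mountain–pass geometry (Lemma \ref{lem3-2}(i)): any $u \in \mathcal{N}_\varepsilon$ has $\|u\|_\varepsilon \ge C > 0$ by Lemma \ref{lem3-1}(iii), whence $I_\varepsilon(u) = \frac14\|u\|_\varepsilon^2 + \int K(\varepsilon x)(\tfrac14 f(u)u - F(u)) + \tfrac{4s-3}{12}\int Q(\varepsilon x)|u|^{2_s^*} \ge \tfrac14 C^2 > 0$, since all three terms are nonnegative by \eqref{equ2-3} and $s > 3/4$.

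For the upper bound I would estimate $\sup_{t \ge 0} I_0(t u_\delta)$ where $I_0$ is the limiting functional with constants $V_0, K_0, Q_0$; since $V(\varepsilon x) \to V_0$, $K(\varepsilon x) \to K_0$, $Q(\varepsilon x) \to Q_0$ uniformly on the (shrinking, in the $x$-variable) support as $\varepsilon \to 0$, one gets $c_\varepsilon \le \sup_t I_\varepsilon(tw_\varepsilon) \le \sup_t I_0(t u_\delta) + o(1)$. The classical Brezis–Nirenberg computation with the fractional Talenti function gives, for the purely critical part,
\begin{equation*}
\sup_{t\ge 0}\Big(\frac{t^2}{2}\|u_\delta\|_{E_{V_0}}^2 - \frac{\mu t^{2_s^*}}{2_s^*}\|u_\delta\|_{2_s^*}^{2_s^*}\Big) = \frac{s}{3}\,\frac{1}{\mu^{(3-2s)/2s}}\Big(\frac{\|u_\delta\|_{E_{V_0}}^2}{\|u_\delta\|_{2_s^*}^2}\Big)^{3/2s},
\end{equation*}
and the well-known estimates for the truncated extremal give $\|(-\Delta)^{s/2}u_\delta\|_2^2 \le \mathcal{S}_s^{3/2s} + O(\delta^{3-2s})$, $\|u_\delta\|_{2_s^*}^{2_s^*} \ge \mathcal{S}_s^{3/2s} - O(\delta^{3})$ (these are exactly the estimates the paper promises in the Appendix). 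The lower-order terms must be controlled: the potential term $\int V_0 u_\delta^2$ contributes $O(\delta^{2s})$ when $s < 3/2$... more carefully $O(\delta^{2s})$ or a log, but since $s \in (3/4,1)$ it is a positive power; the Poisson term $\tfrac14\int \phi_{u_\delta}^t u_\delta^2$ is nonnegative and of order $O(\delta^{\min\{\cdots\}})$, hence harmless for an upper bound only if it is beaten by the gain from $f$; and — crucially — the term $K_0 \int F(u_\delta) \ge c_0 K_0 \int |u_\delta|^q$ with $q > 4$ produces a \emph{negative} contribution of a strictly lower order than the positive error terms, which is what forces the strict inequality. This is why $(f_2)$ demands $q > 4$: with $q$ large enough relative to the error exponents, $\int |u_\delta|^q \gg \delta^{2s} + (\text{Poisson error})$, so the $f$-gain dominates and $\sup_t I_0(t u_\delta) < \tfrac{s}{3} Q_0^{-(3-2s)/2s}\mathcal{S}_s^{3/2s}$ strictly.

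The main obstacle is the bookkeeping of exponents: one must verify that for $s \in (3/4,1)$ and $q \in (4, 2_s^*)$ there is a choice of $\delta = \delta(\varepsilon) \to 0$ (or a fixed small $\delta$ followed by $\varepsilon \to 0$) making the positive errors $O(\delta^{2s})$, $O(\delta^{3-2s})$, the Poisson error, and the $\varepsilon$-perturbation of the potentials all strictly smaller than the negative term $-c\,\delta^{\,3 - q(3-2s)/2}$ coming from $\int|u_\delta|^q$ (the exponent here is the standard one for $\int |U_\delta|^q$ when $q(3-2s)/2 < 3$, i.e. $q < 2_s^*$, which holds). Hypothesis $(Q_1)$ enters to absorb the difference $\int (Q_0 - Q(\varepsilon x))|u_\delta|^{2_s^*}$: on the support, $|Q(\varepsilon x) - Q_0| \le \delta_Q |\varepsilon x|^\alpha$ with $\alpha \ge (3-2s)/2$, and $\int |x|^\alpha |u_\delta|^{2_s^*}$ is of an order that, after the $\varepsilon^\alpha$ scaling factor, does not destroy the strict gap — this is the reason for the precise lower bound $\alpha \ge (3-2s)/2$. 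Assembling these, choosing first $\delta$ small to make the Brezis–Nirenberg gap strictly negative and then $\varepsilon$ small so the potential perturbations are negligible relative to that fixed gap, yields $c_\varepsilon < \tfrac{s}{3 Q(x_0)^{(3-2s)/2s}}\mathcal{S}_s^{3/2s}$ for all sufficiently small $\varepsilon$.
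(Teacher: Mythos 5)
Your proposal follows essentially the same Brezis--Nirenberg strategy as the paper: truncate a fractional Aubin--Talenti bubble centered at $x_0/\varepsilon$ in the rescaled variable, observe that the pure critical part gives $\frac{s}{3}Q(x_0)^{-(3-2s)/(2s)}\mathcal{S}_s^{3/(2s)}+O(\varepsilon^{3-2s})$, control $\int(Q(x_0)-Q(\varepsilon x))|u_\varepsilon|^{2_s^*}$ via $(Q_1)$ with $\alpha\ge(3-2s)/2$, and exploit the strictly lower-order negative term from $F(u)\gtrsim u^q$ (which for $q>4>4s/(3-2s)$ on $s<1$ dominates all $O(\varepsilon^{3-2s})$ errors) to force the strict inequality. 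Two minor remarks: the paper uses the single parameter $\varepsilon$ for both the bubble width and the potential scaling rather than your two-parameter $\delta,\varepsilon$ scheme (the variant $w_\varepsilon(x)=u_\delta(\varepsilon x)$ you mention first misrescales the $H^s$ seminorm, so one must instead translate a bubble of width $\varepsilon$ to $x_0/\varepsilon$ as you note afterwards), and for $s\in(3/4,1)$ the $L^2$ remainder is $O(\varepsilon^{3-2s})$ rather than $O(\varepsilon^{2s})$ by \eqref{equ3-6}, though this is of the same order as the dominant Sobolev error so the conclusion is unaffected.
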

\begin{proof}
We define
\begin{equation*}
u_{\varepsilon}(x)=\psi(x-x_0/\varepsilon)U_{\varepsilon}(x),\quad x\in\mathbb{R}^3,
\end{equation*}
where $U_{\varepsilon}(x)=\varepsilon^{\frac{3-2s}{2}}u^{\ast}(x)$, $u^{\ast}(x)=\frac{\kappa}{(\varepsilon^2+|x-x_0/\varepsilon|^2)^{\frac{3-2s}{2}}}$ (see Appendix), and $\psi\in C^{\infty}(\mathbb{R}^3)$ such that $0\leq\psi\leq1$ in $\mathbb{R}^3$, $\psi(x)\equiv1$ in $B_{R}(0)$ and $\psi\equiv0$ in $\mathbb{R}^3\backslash B_{2R}(0)$. From Lemma A.2 and Lemma A.3 in Appendix, we know that
\begin{equation}\label{equ3-4}
\int_{\mathbb{R}^{3}}|(-\Delta)^{\frac{s}{2}}u_{\varepsilon}(x)|^2\,{\rm d}x\leq \|(-\Delta)^{\frac{s}{2}}\bar{u}\|_2^2+O(\varepsilon^{3-2s}),
\end{equation}
\begin{equation}\label{equ3-5}
\int_{\mathbb{R}^{3}}|u_{\varepsilon}(x)|^{2_s^{\ast}}\,{\rm d}x=\|\bar{u}\|_{2_s^{\ast}}^{2_s^{\ast}}+O(\varepsilon^3),
\end{equation}
where $\bar{u}=\frac{\kappa}{(1+|x|^2)^{\frac{3-2s}{2}}}$ is such that $\mathcal{S}_s=\frac{|(-\Delta)^{\frac{s}{2}}\bar{u}\|_2^2}{\|\bar{u}\|_{2_s^{\ast}}^2}$,
and
\begin{equation}\label{equ3-6}
\int_{\mathbb{R}^3}|u_{\varepsilon}(x)|^p\,{\rm d}x=\left\{
\begin{array}{ll}
O(\varepsilon^{\frac{(2-p)3+2sp}{2}}),&\hbox{$p>\frac{3}{3-2s}$,} \\
O(\varepsilon^{\frac{(2-p)3+2sp}{2}}|\log\varepsilon|), & \hbox{$p=\frac{3}{3-2s}$,} \\
O(\varepsilon^{\frac{3-2s}{2}p}), & \hbox{$1< p<\frac{3}{3-2s}$.}
\end{array}
\right.
\end{equation}

By $(ii)$ of Lemma \ref{lem3-1}, there exists $t_{\varepsilon}>0$ such that $\sup\limits_{t\geq0}I_{\varepsilon}(tu_{\varepsilon})=I_{\varepsilon}(t_{\varepsilon}u_{\varepsilon})$. Hence $\frac{{\rm d}I_{\varepsilon}(tu_{\varepsilon})}{{\rm d} t}\Big|_{t=t_{\varepsilon}}=0$, that is
\begin{align*}
t_{\varepsilon}^2\int_{\mathbb{R}^3}(|(-\Delta)^{\frac{s}{2}}u_{\varepsilon}|^2+V(\varepsilon x) u_{\varepsilon}^2)\,{\rm d}x+t_{\varepsilon}^4\int_{\mathbb{R}^3}\phi_{u_{\varepsilon}}^tu_{\varepsilon}^2\,{\rm d}x=&\int_{\mathbb{R}^3} K(\varepsilon x)f(t_{\varepsilon}u_{\varepsilon})t_{\varepsilon}u_{\varepsilon}\,{\rm d}x\\
&+t_{\varepsilon}^{2_s^{\ast}}\int_{\mathbb{R}^3}Q(\varepsilon x)|u_{\varepsilon}|^{2_s^{\ast}}\,{\rm d}x.
\end{align*}
By $(f_0)$, we have that
\begin{align*}
t_{\varepsilon}^2\int_{\mathbb{R}^3}(|(-\Delta)^{\frac{s}{2}}u_{\varepsilon}|^2+V(\varepsilon x) u_{\varepsilon}^2)\,{\rm d}x+t_{\varepsilon}^4\int_{\mathbb{R}^3}\phi_{u_{\varepsilon}}^tu_{\varepsilon}^2\,{\rm d}x\geq t_{\varepsilon}^{2_s^{\ast}}\int_{\mathbb{R}^3}Q(\varepsilon x)|u_{\varepsilon}|^{2_s^{\ast}}\,{\rm d}x.
\end{align*}
It follows from $(iii)$ of Lemma \ref{lem2-1} that
\begin{align}\label{equ3-7}
t_{\varepsilon}^{2_s^{\ast}}\leq\frac{1}{\|Q(\varepsilon x)^{\frac{1}{2_s^{\ast}}}u_{\varepsilon}\|_{2_s^{\ast}}^{2_s^{\ast}}}\Big(t_{\varepsilon}^2\int_{\mathbb{R}^3}(|(-\Delta)^{\frac{s}{2}}u_{\varepsilon}|^2+V(\varepsilon x) u_{\varepsilon}^2)\,{\rm d}x+Ct_{\varepsilon}^4\|u_{\varepsilon}\|_{\frac{12}{3+2t}}^4\Big).
\end{align}
\eqref{equ3-4}, \eqref{equ3-5}, \eqref{equ3-6} and \eqref{equ3-7} imply that $|t_{\varepsilon}|\leq C_1$, where $C_1$ is independent of $\varepsilon>0$ small. On the other hand, we may assume that there is a positive constant $C_2>0$ such that $t_{\varepsilon}\geq C_2>0$ for $\varepsilon>0$ small. Otherwise, we can find a sequence $\varepsilon_n\rightarrow0$ as $n\rightarrow\infty$ such that $t_{\varepsilon_n}\rightarrow0$ as $n\rightarrow\infty$. Therefore
\begin{equation*}
0<c_{\varepsilon}\leq \sup_{t\geq0}I_{\varepsilon}(tu_{\varepsilon_n})=I_{\varepsilon}(t_{\varepsilon_n}u_{\varepsilon_n})\rightarrow0,
\end{equation*}
which is a contradiction.

Denote $g(t)=\frac{t^2}{2}\int_{\mathbb{R}^3}|(-\Delta)^{\frac{s}{2}}u_{\varepsilon}|^2\,{\rm d}x-\frac{ t^{2_s^{\ast}}}{2_s^{\ast}}\int_{\mathbb{R}^3}Q(x_0)|u_{\varepsilon}|^{2_s^{\ast}}\,{\rm d}x$, by \eqref{equ3-4} and \eqref{equ3-5}, it is easy to check that
\begin{align*}
\sup_{t\geq0}g(t)=\frac{s}{3}\frac{\Big(\int_{\mathbb{R}^3}|(-\Delta)^{\frac{s}{2}}u_{\varepsilon}|^2\,{\rm d}x\Big)^{\frac{3}{2s}}}{\Big(Q(x_0)\int_{\mathbb{R}^3}|u_{\varepsilon}|^{2_s^{\ast}}\,{\rm d}x\Big)^{\frac{3-2s}{2s}}}&=\frac{s}{3Q(x_0)^{\frac{3-2s}{2s}}}\frac{\Big(\|(-\Delta)^{\frac{s}{2}}\bar{u}\|_2^2+O(\varepsilon^{3-2s})\Big)^{\frac{3}{2s}}}{\Big(\|\bar{u}\|_{2_s^{\ast}}^{2_s^{\ast}}+O(\varepsilon^3)\Big)^{\frac{3-2s}{2s}}}\\
&\leq\frac{s}{3Q(x_0)^{\frac{3-2s}{2s}}}\Big(\frac{\|(-\Delta)^{\frac{s}{2}}\bar{u}\|_2^2}{\|\bar{u}\|_{2_s^{\ast}}^2}\Big)^{\frac{3}{2s}}+O(\varepsilon^{3-2s})\\
&=\frac{s}{3Q(x_0)^{\frac{3-2s}{2s}}}\mathcal{S}_s^{\frac{3}{2s}}+O(\varepsilon^{3-2s}).
\end{align*}
Thus
\begin{align}\label{equ3-8}
I_{\varepsilon}(t_{\varepsilon}u_{\varepsilon})&\leq \sup_{t\geq0}g(t)+C\int_{\mathbb{R}^3}V(\varepsilon x)|u_{\varepsilon}|^2\, {\rm d}x+C\int_{\mathbb{R}^3}\phi_{u_{\varepsilon}}^tu_{\varepsilon}^2\,{\rm d}x-C\int_{\mathbb{R}^3}|u_{\varepsilon}|^{q+1}\, {\rm d}x\nonumber\\
&+\int_{\mathbb{R}^3}\Big(Q(x_0)-Q(\varepsilon x)\Big)|u_{\varepsilon}|^{2_s^{\ast}}\,{\rm d}x\nonumber\\
&\leq\frac{s}{3Q(x_0)^{\frac{3-2s}{2s}}}\mathcal{S}_s^{\frac{3}{2s}}+O(\varepsilon^{3-2s})+C\int_{\mathbb{R}^3}|u_{\varepsilon}|^2\, {\rm d}x+C\Big(\int_{\mathbb{R}^3}|u_{\varepsilon}|^{\frac{12}{3+2t}}\,{\rm d}x\Big)^{\frac{3+2t}{3}}\nonumber\\
&-C\int_{\mathbb{R}^3}|u_{\varepsilon}|^{q+1}\, {\rm d}x+\int_{\mathbb{R}^3}\Big(Q(x_0)-Q(\varepsilon x)\Big)|u_{\varepsilon}|^{2_s^{\ast}}\,{\rm d}x\nonumber\\
&\leq\frac{s}{3Q(x_0)^{\frac{3-2s}{2s}}}\mathcal{S}_s^{\frac{3}{2s}}+O(\varepsilon^{3-2s})+C\Big(\int_{\mathbb{R}^3}|u_{\varepsilon}|^{\frac{12}{3+2t}}\,{\rm d}x\Big)^{\frac{3+2t}{3}}-C\int_{\mathbb{R}^3}|u_{\varepsilon}|^{q+1}\, {\rm d}x\nonumber\\
&+\int_{\mathbb{R}^3}\Big(Q(x_0)-Q(\varepsilon x)\Big)|u_{\varepsilon}|^{2_s^{\ast}}\,{\rm d}x.
\end{align}
where we have used \eqref{equ3-6} and $s>\frac{3}{4}$ which implies $2<\frac{3}{3-2s}$.

By \eqref{equ3-6}, we have that
\begin{align}\label{equ3-9}
\lim_{\varepsilon\rightarrow0^{+}}\frac{\Big(\int_{\mathbb{R}^3}|u_{\varepsilon}|^{\frac{12}{3+2t}}\,{\rm d}x\Big)^{\frac{3+2t}{3}}}{\varepsilon^{3-2s}}\leq\left\{
                                                                                                                            \begin{array}{ll}
                                                                                                                              \lim\limits_{\varepsilon\rightarrow0^{+}}\frac{O(\varepsilon^{2t+4s-3})}{\varepsilon^{3-2s}}=0, & \hbox{$\frac{12}{3+2t}>\frac{3}{3-2s}$,} \\
                                                                                                                              \lim\limits_{\varepsilon\rightarrow0^{+}}\frac{O(\varepsilon^{2t+4s-3}|\log\varepsilon|)}{\varepsilon^{3-2s}}=0, & \hbox{$\frac{12}{3+2t}=\frac{3}{3-2s}$,} \\
                                                                                                                             \lim\limits_{\varepsilon\rightarrow0^{+}} \frac{O(\varepsilon^{2(3-2s)}|\log\varepsilon|)}{\varepsilon^{3-2s}}=0, & \hbox{$\frac{12}{3+2t}<\frac{3}{3-2s}$.}
                                                                                                                            \end{array}
                                                                                                                          \right.
\end{align}
Since $s>\frac{3}{4}$ and $q>\frac{3+2s}{3-2s}$, then $q+1>\frac{3}{3-2s}$, $2s-\frac{3-2s}{2}(q+1)<0$. Thus
\begin{align}\label{equ3-10}
\lim_{\varepsilon\rightarrow0^{+}}\frac{\int_{\mathbb{R}^3}|u_{\varepsilon}|^{q+1}\,{\rm d}x}{\varepsilon^{3-2s}}=\lim_{\varepsilon\rightarrow0^{+}}\frac{O(\varepsilon^{3-\frac{3-2s}{2}(q+1)})}{\varepsilon^{3-2s}}=+\infty.
\end{align}
By the hypothesis $(Q_1)$, we deduce that
\begin{align*}
\int_{\mathbb{R}^3}&\Big(Q(x_0)-Q(\varepsilon x)\Big)|u_{\varepsilon}|^{2_s^{\ast}}\,{\rm d}x=C\varepsilon^3\Big(\int_{|\varepsilon x-x_0|\leq \rho_0}\frac{|\varepsilon x-x_0|^{\alpha}\psi(x-x_0/\varepsilon)^{2_s^{\ast}}}{(\varepsilon^2+|x-x_0/\varepsilon|^2)^3}\,{\rm d}x\\
&+\int_{|\varepsilon x-x_0|> \rho_0}\frac{\psi(x-x_0/\varepsilon)^{2_s^{\ast}}}{(\varepsilon^2+|x-x_0/\varepsilon|^2)^{\frac{N-2s}{2}}}\,{\rm d}x\Big)\\
&\leq C\varepsilon^3\Big(\varepsilon^{2\alpha-3}\int_0^{\frac{\rho_0}{\varepsilon^2}}\frac{r^{2+\alpha}}{(1+r^2)^3}\,{\rm d}r+\frac{1}{\varepsilon^3}\int_{\frac{\rho_0}{\varepsilon}}^{+\infty}\frac{r^2}{(1+r^2)^3}\,{\rm d}r\Big)\\
&\leq C\varepsilon^3\Big(\varepsilon^{2\alpha-3}\int_0^1\frac{r^{2+\alpha}}{(1+r^2)^3}\,{\rm d}r+\varepsilon^{2\alpha-3}\int_1^{\frac{\rho_0}{\varepsilon^2}}\frac{r^{2+\alpha}}{(1+r^2)^{\frac{7}{4}}}\,{\rm d}r+1\Big)\\
&\leq C(\varepsilon^{2\alpha}+\varepsilon^{3}+\varepsilon^3).
\end{align*}
Since $\frac{3-2s}{2}\leq\alpha<\frac{3}{2}$, then
\begin{equation}\label{equ3-11}
\int_{\mathbb{R}^3}\Big(Q(x_0)-Q(\varepsilon x)\Big)|u_{\varepsilon}|^{2_s^{\ast}}\,{\rm d}x\leq O(\varepsilon^{3-2s}).
\end{equation}
Therefore, combining with \eqref{equ3-8}, \eqref{equ3-9}, \eqref{equ3-10} and \eqref{equ3-11}, we conclude that
\begin{equation*}
I_{\varepsilon}(t_{\varepsilon}u_{\varepsilon})<\frac{s}{3Q(x_0)^{\frac{3-2s}{2s}}}\mathcal{S}_s^{\frac{3}{2s}}
\end{equation*}
for $\varepsilon$ small enough and thus the proof is completed.
\end{proof}

\begin{lemma}\label{lem3-4}
Assume that $(V)$, $(Q_0)$, $(Q_1)$, $(K)$ and $(f_0)-(f_2)$ hold. If $c_{\varepsilon}<\min\{m_{\infty}, \frac{s}{3Q(x_0)^{\frac{3-2s}{2s}}}\mathcal{S}_s^{\frac{3}{2s}}\}$, then $I_{\varepsilon}$ satisfies the $(PS)$ condition for $c_{\varepsilon}$.
\end{lemma}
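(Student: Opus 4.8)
The plan is to run the standard concentration-compactness argument for critical problems, using the two strict inequalities in the hypothesis to exclude, respectively, the escape of mass to infinity and critical concentration. Let $\{u_n\}\subset H_\varepsilon$ be a $(PS)_{c_\varepsilon}$ sequence. First I would show it is bounded: from
\[
I_\varepsilon(u_n)-\tfrac14\langle I_\varepsilon'(u_n),u_n\rangle=\tfrac14\|u_n\|_\varepsilon^2+\int_{\mathbb{R}^3}K(\varepsilon x)\Big(\tfrac14 f(u_n)u_n-F(u_n)\Big)\,{\rm d}x+\tfrac{4s-3}{12}\int_{\mathbb{R}^3}Q(\varepsilon x)|u_n|^{2_s^{\ast}}\,{\rm d}x
\]
and \eqref{equ2-3} together with $4s-3>0$, all three terms on the right are nonnegative, so $\tfrac14\|u_n\|_\varepsilon^2\le c_\varepsilon+o(1)+o(\|u_n\|_\varepsilon)$. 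Passing to a subsequence, $u_n\rightharpoonup u$ in $H_\varepsilon$; Lemma \ref{lem3-1}$(iv)$ (applicable since $s>\tfrac34$ forces $2s+2t>3$) gives $I_\varepsilon'(u)=0$, and the displayed identity applied to $u$ shows $I_\varepsilon(u)\ge0$.

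Next, set $v_n:=u_n-u\rightharpoonup0$. A Brezis--Lieb decomposition of $\|\cdot\|_\varepsilon^2$ and of $\int Q(\varepsilon x)|\cdot|^{2_s^{\ast}}$, of the subcritical terms $\int K(\varepsilon x)F(\cdot)$ and $\int K(\varepsilon x)f(\cdot)\cdot$ (legitimate by the subcritical growth of $f$ together with a.e. convergence and boundedness in $H_\varepsilon\hookrightarrow L^{2_s^{\ast}}$), and the analogous splitting of the Riesz term $\int\phi^t_{\cdot}(\cdot)^2$ (the mixed terms vanish because $v_n\to0$ in $L^{12/(3+2t)}_{\mathrm{loc}}$ and $\phi^t_u\in L^{2_t^{\ast}}$), yields $I_\varepsilon(u_n)=I_\varepsilon(u)+I_\varepsilon(v_n)+o(1)$ and $\langle I_\varepsilon'(u_n),u_n\rangle=\langle I_\varepsilon'(u),u\rangle+\langle I_\varepsilon'(v_n),v_n\rangle+o(1)$. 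Hence $I_\varepsilon(v_n)\to c_\varepsilon-I_\varepsilon(u)$ and $\langle I_\varepsilon'(v_n),v_n\rangle\to0$. It then remains to prove $v_n\to0$ in $H_\varepsilon$, and I would split into two cases according to whether $v_n$ vanishes in the sense of Lemma \ref{lem2-2}.

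If $v_n$ \emph{vanishes}, i.e. $\sup_{y\in\mathbb{R}^3}\int_{B_1(y)}|v_n|^2\to0$, then Lemma \ref{lem2-2} gives $v_n\to0$ in $L^r(\mathbb{R}^3)$ for all $r\in(2,2_s^{\ast})$. Since $\tfrac{12}{3+2t}\in(2,2_s^{\ast})$ (because $4s+2t>3$) and, because $f(t)=o(t^3)$ and $s>\tfrac34$, $f(t)t\le\eta|t|^4+C_\eta|t|^p$ with $4,p\in(2,2_s^{\ast})$, Lemma \ref{lem2-1}$(iii)$ and \eqref{equ2-2} force $\int\phi^t_{v_n}v_n^2\to0$, $\int K(\varepsilon x)F(v_n)\to0$ and $\int K(\varepsilon x)f(v_n)v_n\to0$. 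Then $\langle I_\varepsilon'(v_n),v_n\rangle\to0$ gives $\|v_n\|_\varepsilon^2\to\ell$ and $\int Q(\varepsilon x)|v_n|^{2_s^{\ast}}\to\ell$ for one and the same $\ell\ge0$, and $I_\varepsilon(v_n)\to\tfrac{s}{3}\ell$. If $\ell>0$, then $\mathcal{S}_s\|v_n\|_{2_s^{\ast}}^2\le\|(-\Delta)^{s/2}v_n\|_2^2\le\|v_n\|_\varepsilon^2$ combined with $\int Q(\varepsilon x)|v_n|^{2_s^{\ast}}\le Q(x_0)\|v_n\|_{2_s^{\ast}}^{2_s^{\ast}}$ yields $\ell\ge\mathcal{S}_s^{3/(2s)}Q(x_0)^{-(3-2s)/(2s)}$, whence $c_\varepsilon\ge I_\varepsilon(u)+\tfrac{s}{3}\ell\ge\tfrac{s}{3Q(x_0)^{(3-2s)/(2s)}}\mathcal{S}_s^{3/(2s)}$, contradicting the hypothesis; so $\ell=0$ and $u_n\to u$ in $H_\varepsilon$.

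If instead $v_n$ \emph{does not vanish}, there are $\delta>0$ and $y_n$ with $\int_{B_1(y_n)}|v_n|^2\ge\delta$, and $|y_n|\to\infty$ since $v_n\rightharpoonup0$ and $H_\varepsilon$ embeds locally compactly. Setting $w_n:=v_n(\cdot+y_n)\rightharpoonup w\ne0$ in $H^s$, testing $\langle I_\varepsilon'(u_n)-I_\varepsilon'(u),\varphi(\cdot-y_n)\rangle\to0$ against $\varphi\in C_0^\infty$ and using $u(\cdot+y_n)\to0$ locally together with the genuine limits $K(\varepsilon\cdot+\varepsilon y_n)\to K_\infty$, $Q(\varepsilon\cdot+\varepsilon y_n)\to Q_\infty$ from $(K)$, $(Q_0)$ and $V(\varepsilon\cdot+\varepsilon y_n)\to V_*\ge V_\infty$ a.e.\ from $(V)$, one finds that $w$ is a nontrivial weak solution of $(-\Delta)^sw+V_*w+\phi^t_ww=K_\infty f(w)+Q_\infty|w|^{2_s^{\ast}-2}w$. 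Since the fibering map $t\mapsto I^{V_*,K_\infty,Q_\infty}(tw)$ is maximised at $t=1$ and $V_*\ge V_\infty$, we get $I^{V_*,K_\infty,Q_\infty}(w)=\max_{t\ge0}I^{V_*,K_\infty,Q_\infty}(tw)\ge\max_{t\ge0}I_\infty(tw)\ge m_\infty$; on the other hand $c_\varepsilon-I_\varepsilon(u)=\lim\big[I_\varepsilon(v_n)-\tfrac14\langle I_\varepsilon'(v_n),v_n\rangle\big]$, whose integrand is a sum of nonnegative terms, so translating by $y_n$ and applying weak lower semicontinuity and Fatou's lemma this limit is $\ge I^{V_*,K_\infty,Q_\infty}(w)\ge m_\infty$, giving $c_\varepsilon\ge m_\infty$ and contradicting the hypothesis. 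I expect this last case to be the main obstacle: one must extract the bubble at infinity and identify its limiting equation when $(V)$ only controls $\liminf_{|x|\to\infty}V$, so that $V_*$ is merely $\ge V_\infty$ and possibly nonconstant, must check that $V(\varepsilon\cdot+\varepsilon y_n)$ stays bounded on the supports of the test functions (which follows since $\|v_n\|_\varepsilon$ is bounded while $\int_{B_1(y_n)}|v_n|^2\ge\delta$, so the weight cannot blow up where $v_n$ carries $L^2$-mass), and must pass the nonlocal term $\phi^t_{u_n}u_n$ to the limit under translation; the rest, including the critical-concentration alternative, is routine.
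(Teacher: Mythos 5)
Your proof is correct, but the route is genuinely different from the one in the paper. The paper feeds the nonnegative density $\rho_n=\tfrac14|(-\Delta)^{s/2}u_n|^2+\tfrac14V(\varepsilon x)u_n^2+\tfrac14K(\varepsilon x)(f(u_n)u_n-4F(u_n))+\tfrac{4s-3}{12}Q(\varepsilon x)|u_n|^{2_s^\ast}$ into the full three-way Lions alternative (Proposition \ref{pro2-1}) applied to $u_n$ itself: it rules out vanishing exactly as you do, then spends the bulk of the proof ruling out dichotomy by a spatial cut-off splitting $u_n=v_n+w_n$ with the sub-case analysis on the signs of $\langle I_\varepsilon'(v_n),v_n\rangle$, $\langle I_\varepsilon'(w_n),w_n\rangle$ and Nehari projections; only after tightness of $\rho_n$ around a bounded $\{y_n\}$ is established does it do a Brezis--Lieb step on $\widetilde{u}_n=u_n-u$ to kill a possible critical bubble. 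You invert the order: Brezis--Lieb on $u_n=u+v_n$ first (including the Riesz term, whose cross terms vanish because $H^s$ embeds compactly in $L^r_{\rm loc}$ and $\|uv_n\|_{6/(3+2t)}\to0$), then the two-way vanishing/non-vanishing alternative on $v_n$; in the non-vanishing case you extract a single translated bubble $w\neq0$ and compare energies with $m_\infty$. This is shorter and avoids the dichotomy case and the boundedness-of-$y_n$ discussion entirely, but it trades these for having to identify a limit equation under translation. Here the paper is more careful than you are: because $(V)$ only gives a $\liminf$, it never passes $V(\varepsilon\cdot+\varepsilon y_n)$ to a pointwise limit, but instead compares $I_\varepsilon(w_n)$ with $I_\infty(w_n)$ directly on the sequence. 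Your weak*-limit $V_*\ge V_\infty$ plus Fatou and the fibering inequality $I^{V_*,K_\infty,Q_\infty}(w)\ge\max_t I_\infty(tw)\ge m_\infty$ does close the same gap, with the essential ingredient being the strong local convergence $w_n^2\to w^2$ in $L^1_{\rm loc}$ to pair with the weak* convergence of the potential. One small slip to note: $s>\tfrac34$ gives $4s+2t>3$, not $2s+2t>3$ as you write when invoking Lemma \ref{lem3-1}$(iv)$; the fact that the weak limit is a critical point is in any case subsumed by your own Brezis--Lieb passage to the limit against test functions.
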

\begin{proof}
Let $\{u_n\}$ be a $(PS)$ sequence of $I_{\varepsilon}$ at the level $c_{\varepsilon}$, i.e.,
\begin{equation}\label{equ3-12}
I_{\varepsilon}(u_n)\rightarrow c_{\varepsilon},\quad I_{\varepsilon}'(u_n)\rightarrow0 \,\, \text{in}\,\, (H_{\varepsilon})'.
\end{equation}
It is easy to check that $\{u_n\}$ is bounded in $H_{\varepsilon}$. Thus, up to a subsequence, still denoted by $\{u_n\}$, we may assume that there exists $u\in H_{\varepsilon}$ such that $u_n\rightharpoonup u$ in $H_{\varepsilon}$, $u_n\rightarrow u$ in $L_{loc}^r(\mathbb{R}^3)$ for $2\leq r<2_s^{\ast}$.

Next, we aim to show that $u_n\rightarrow u$ in $H_{\varepsilon}$. For this purpose, set
\begin{equation*}
\rho_n(x)=\frac{1}{4}|(-\Delta)^{\frac{s}{2}}u_n|^2+\frac{1}{4}V(\varepsilon x)|u_n|^2+\frac{1}{4}K(\varepsilon x)(f(u_n)u_n-4F(u_n))+\frac{4s-3}{12}Q(\varepsilon x)|u_n|^{2_s^{\ast}}.
\end{equation*}
Clearly, $\{\rho_n\}$ is bounded in $L^1(\mathbb{R}^3)$. Hence, up to a subsequence, still denoted by $\{\rho_n\}$, we may assume that $\Psi(u_n):=\|\rho_n\|_1\rightarrow l$ as $n\rightarrow\infty$. Obviously, $l>0$, otherwise, we can get a contradiction with $c_{\varepsilon}>0$. In fact, $l=c_{\varepsilon}$.

Next, we apply Proposition \ref{pro2-1} to $\{\rho_n\}$.

If $\{\rho_n\}$ vanishing, then $\{u_n^2\}$ also vanishing, i.e., there exists $R>0$ such that
\begin{equation*}
\lim_{n\rightarrow\infty}\sup_{y\in\mathbb{R}^3}\int_{B_R(y)}|u_n|^2\,{\rm d}x=0.
\end{equation*}
By Lemma \ref{lem2-2}, one has $u_n\rightarrow0$ in $L^r(\mathbb{R}^3)$, $2<r<2_s^{\ast}$. Thus, by \eqref{equ2-2} and $(iii)$ of Lemma \ref{lem2-1}, we can deduce that
\begin{equation*}
\int_{\mathbb{R}^3}K(\varepsilon x)F(u_n)\,{\rm d}x\rightarrow0\quad \int_{\mathbb{R}^3}K(\varepsilon x)f(u_n)u_n\,{\rm d}x\rightarrow0,\quad \int_{\mathbb{R}^3}\phi_{u_n}^tu_n^2\,{\rm d}x\rightarrow0.
\end{equation*}
From \eqref{equ3-12}, we get
\begin{equation}\label{equ3-13}
I_{\varepsilon}(u_n)=\frac{1}{2}\|u_n\|_{\varepsilon}^2-\frac{1}{2_s^{\ast}}\int_{\mathbb{R}^3}Q(\varepsilon x)|u_n|^{2_s^{\ast}}\,{\rm d}x+o_n(1)
\end{equation}
and
\begin{equation}\label{equ3-14}
o_n(1)=\langle I_{\varepsilon}'(u_n),u_n\rangle=\|u_n\|_{\varepsilon}^2-\int_{\mathbb{R}^3}Q(\varepsilon x)|u_n|^{2_s^{\ast}}\,{\rm d}x+o_n(1).
\end{equation}
We may assume that there exist $\mathcal{L}\geq0$ such that
\begin{equation*}
\|u_n\|_{\varepsilon}^2\rightarrow \mathcal{L},\quad \int_{\mathbb{R}^3}Q(\varepsilon x)|u_n|^{2_s^{\ast}}\,{\rm d}x\rightarrow \mathcal{L}.
\end{equation*}
Obviously, $\mathcal{L}>0$, otherwise, a contradiction with $c_{\varepsilon}>0$.
By \eqref{equ2-1}, we have that
\begin{align*}
\int_{\mathbb{R}^3}Q(\varepsilon x)|u_n|^{2_s^{\ast}}\,{\rm d}x\leq Q_{x_0}\int_{\mathbb{R}^3}|u_n|^{2_s^{\ast}}\,{\rm d}x&\leq Q(x_0)\Big(\mathcal{S}_s^{-1}\int_{\mathbb{R}^3}|(-\Delta)^{\frac{s}{2}}u_n|^2\,{\rm d}x\Big)^{\frac{2_s^{\ast}}{2}}\\
&\leq Q(x_0)\mathcal{S}_s^{-\frac{3}{3-2s}}\|u_n\|_{\varepsilon}^{2_s^{\ast}}
\end{align*}
which implies that
\begin{equation*}
\mathcal{L}\geq\frac{1}{Q(x_0)^{\frac{3-2s}{2s}}}\mathcal{S}_s^{\frac{3}{2s}}.
\end{equation*}
Combing \eqref{equ3-13}, we can deduce that $c_{\varepsilon}=\frac{s}{3}\mathcal{L}\geq\frac{s}{3Q(x_0)^{\frac{3-2s}{2s}}}\mathcal{S}_s^{\frac{3}{2s}}$, this contradicts with the assumption. Hence, vanishing does not occur.

Next, we show the dichotomy does not occur. Suppose by contradiction that there exist $\alpha\in(0,l)$ and $\{y_n\}\subset \mathbb{R}^3$ such that for every $\varepsilon_n\rightarrow0$, we can choose $\{R_n\}\subset\mathbb{R}_{+}$ ($R_n>R_0/\varepsilon+R'$, for any fixed $\varepsilon>0$, $R_0,R'$ are positive constants defined later) with $R_n\rightarrow\infty$ satisfying
\begin{equation}\label{equ3-15}
\limsup_{n\rightarrow\infty}\Big|\alpha-\int_{B_{R_n}(y_n)}\rho_n(x)\,{\rm d}x\Big|+\Big|(l-\alpha)-\int_{\mathbb{R}^3\backslash B_{R_n}(y_n)}\rho_n(x)\,{\rm d}x\Big|<\varepsilon_n.
\end{equation}
Let $\xi:\mathbb{R}^{+}\cup\{0\}\rightarrow\mathbb{R}^{+}$ be a cut-off function such that $0\leq\xi\leq1$, $\xi(t)=1$ for $t\leq1$, $\xi(t)=0$ for $t\geq2$ and $|\xi'(t)|\leq2$. Set
\begin{equation*}
v_n(x)=\xi\Big(\frac{|x-y_n|}{R_n}\Big)u_n(x),\quad w_n(x)=\Big(1-\xi\Big(\frac{|x-y_n|}{R_n}\Big)\Big)u_n(x),
\end{equation*}
Then by \eqref{equ3-15}, we see that
\begin{align*}
\liminf_{n\rightarrow\infty}\Psi(v_n)\geq\alpha,\quad \liminf_{n\rightarrow\infty}\Psi(w_n)\geq l-\alpha.
\end{align*}

Denote $\Omega_n=B_{2R_n}(y_n)\backslash B_{R_n}(y_n)$, by \eqref{equ3-15}, then $\int_{\Omega_n}\rho_n(x)\,{\rm d}x\rightarrow0$ as $n\rightarrow\infty$, which leads to
\begin{equation*}
\int_{\Omega_n}\Big(|(-\Delta)^{\frac{s}{2}}u_n|^2+V(\varepsilon x)|u_n|^2\Big)\,{\rm d}x\rightarrow0,\quad \int_{\Omega_n}|u_n|^2\,{\rm d}x\rightarrow0,\quad \int_{\Omega_n}|u_n|^{2_s^{\ast}}\,{\rm d}x\rightarrow0
\end{equation*}
and thus by interpolation inequality, we can infer that $\int_{\Omega_n}\phi_{u_n}^tu_n^2\,{\rm d}x\rightarrow0$. Therefore, by similar arguments as Lemma 3.4 in \cite{Teng3}, we have that
\begin{equation}\label{equ3-16}
\int_{\mathbb{R}^3}|(-\Delta)^{\frac{s}{2}}u_n|^2\,{\rm d}x=\int_{\mathbb{R}^3}|(-\Delta)^{\frac{s}{2}}v_n|^2\,{\rm d}x+\int_{\mathbb{R}^3}|(-\Delta)^{\frac{s}{2}}w_n|^2\,{\rm d}x+o_n(1),
\end{equation}
\begin{equation}\label{equ3-17}
\int_{\mathbb{R}^3}V(\varepsilon x)u_n^2(x)\,{\rm d}x=\int_{\mathbb{R}^3}V(\varepsilon x)v_n^2(x)\,{\rm d}x+\int_{\mathbb{R}^3}V(\varepsilon x)w_n^2(x)\,{\rm d}x+o_n(1),
\end{equation}
\begin{equation}\label{equ3-18}
\int_{\mathbb{R}^3}K(\varepsilon x)F(u_n)\,{\rm d}x=\int_{\mathbb{R}^3}K(\varepsilon x)F(v_n)\,{\rm d}x+\int_{\mathbb{R}^3}K(\varepsilon x)F(w_n)\,{\rm d}x+o_n(1),
\end{equation}
\begin{equation}\label{equ3-19}
\int_{\mathbb{R}^3}K(\varepsilon x)f(u_n)u_n\,{\rm d}x=\int_{\mathbb{R}^3}K(\varepsilon x)f(v_n)v_n\,{\rm d}x+\int_{\mathbb{R}^3}K(\varepsilon x)f(w_n)w_n\,{\rm d}x+o_n(1),
\end{equation}
\begin{equation}\label{equ3-20}
\int_{\mathbb{R}^3}Q(\varepsilon x)|u_n|^{2_s^{\ast}}\,{\rm d}x=\int_{\mathbb{R}^3}Q(\varepsilon x)|v_n|^{2_s^{\ast}}\,{\rm d}x+\int_{\mathbb{R}^3}Q(\varepsilon x)|w_n|^{2_s^{\ast}}\,{\rm d}x+o_n(1),
\end{equation}
\begin{equation}\label{equ3-21}
\int_{\mathbb{R}^3}\phi_{u_n}^tu_n^2\,{\rm d}x\geq\int_{\mathbb{R}^3}\phi_{v_n}^tv_n^2\,{\rm d}x+\int_{\mathbb{R}^3}\phi_{w_n}^tw_n^2\,{\rm d}x+o_n(1).
\end{equation}

We need to check that \eqref{equ3-18} and \eqref{equ3-19} hold. Indeed, by $(K)$, \eqref{equ2-2} and H\"{o}lder's inequality, we infer that
\begin{align*}
&\Big|\int_{\mathbb{R}^3}K(\varepsilon x)\Big(F(u_n)-F(v_n)-F(w_n)\Big)\,{\rm d}x\Big|\\
&=\Big|\int_{\Omega_n}K(\varepsilon x)\Big(F(u_n)-F(\xi u_n)-F((1-\xi)u_n)\Big)\,{\rm d}x\Big|\\
&\leq C\Big(\varepsilon\int_{\Omega_n}|u_n|^4\,{\rm d}x+\int_{\Omega_n}|u_n|^p\,{\rm d}x\Big)\\
&\leq C\Big[\varepsilon\Big(\int_{\Omega_n}|u_n|^2\,{\rm d}x\Big)^{2\theta_1}\Big(\int_{\Omega_n}|u_n|^{2_s^{\ast}}\,{\rm d}x\Big)^{\frac{4(1-\theta)}{2_s^{\ast}}}+\Big(\int_{\Omega_n}|u_n|^2\,{\rm d}x\Big)^{\frac{p\theta_2}{2}}\Big(\int_{\Omega_n}|u_n|^{2_s^{\ast}}\,{\rm d}x\Big)^{\frac{p(1-\theta_2)}{2_s^{\ast}}}\Big]\\
&\rightarrow0
\end{align*}
as $n\rightarrow\infty$, where $\theta_1\in(0,1)$ and $\theta_2\in(0,1)$ satisfy $\frac{1}{4}=\frac{\theta_1}{2}+\frac{1-\theta_1}{2_s^{\ast}}$, $\frac{1}{p}=\frac{\theta_2}{2}+\frac{1-\theta_2}{2_s^{\ast}}$. Similarly, we can verify that \eqref{equ3-19} hold.

Hence, by \eqref{equ3-16}--\eqref{equ3-21}, we get
\begin{equation*}
\Psi(u_n)\geq\Psi(v_n)+\Psi(w_n)+o_n(1).
\end{equation*}
Then
\begin{equation*}
l=\lim_{n\rightarrow\infty}\Psi(u_n)\geq\liminf_{n\rightarrow\infty} \Psi(v_n)+\liminf_{n\rightarrow\infty} \Psi(w_n)\geq\alpha+l-\alpha=l,
\end{equation*}
hence
\begin{equation}\label{equ3-22}
\lim_{n\rightarrow\infty}\Psi(v_n)=\alpha,\quad \lim_{n\rightarrow\infty}\Psi(w_n)=l-\alpha.
\end{equation}
By \eqref{equ3-12}, \eqref{equ3-16}-\eqref{equ3-21}, we have
\begin{equation}\label{equ3-23}
o_n(1)=\langle I_{\varepsilon}'(u_n),u_n\rangle\geq\langle I_{\varepsilon}'(v_n),v_n\rangle+\langle I_{\varepsilon}'(w_n),w_n\rangle+o_n(1).
\end{equation}
We distinguish the following two cases:\\
Case 1. Up to a subsequence, we may assume that $\langle I_{\varepsilon}'(v_n),v_n\rangle\leq0$ or $\langle I_{\varepsilon}'(w_n),w_n\rangle\leq0$.

Without loss of generality, we suppose that $\langle I_{\varepsilon}'(v_n),v_n\rangle\leq0$, then
\begin{align}\label{equ3-24}
\|v_n\|_{\varepsilon}^2+\int_{\mathbb{R}^3}\phi_{v_n}^tv_n^2\,{\rm d}x-\int_{\mathbb{R}^3}K(\varepsilon x)f(v_n)v_n\,{\rm d}x-\int_{\mathbb{R}^3}Q(\varepsilon x)|v_n|^{2_s^{\ast}}\,{\rm d}x\leq0.
\end{align}
By $(ii)$ of Lemma \ref{lem3-1}, for any $n$, there exists $t_n>0$ such that $t_nv_n\in\mathcal{N}_{\varepsilon}$ and then $\langle I_{\varepsilon}'(t_nv_n),t_nv_n\rangle=0$, i.e.,
\begin{align}\label{equ3-25}
t_n^2\|v_n\|_{\varepsilon}^2+t_n^4\int_{\mathbb{R}^3}\phi_{v_n}^tv_n^2\,{\rm d}x-\int_{\mathbb{R}^3}K(\varepsilon x)f(t_nv_n)t_nv_n\,{\rm d}x-t_n^{2_s^{\ast}}\int_{\mathbb{R}^3}Q(\varepsilon x)|v_n|^{2_s^{\ast}}\,{\rm d}x=0.
\end{align}
Combining \eqref{equ3-24} and \eqref{equ3-25}, we have
\begin{equation*}
(\frac{1}{t_n^2}-1)\|v_n\|_{\varepsilon}^2-\int_{\mathbb{R}^3}K(\varepsilon x)\Big(\frac{f(t_nv_n)}{(t_nv_n)^3}-\frac{f(v_n)}{v_n^3}\Big)v_n^4\,{\rm d}x-(t_n^{2_s^{{\ast}-4}}-1)\int_{\mathbb{R}^3}Q(\varepsilon x)|v_n|^{2_s^{\ast}}\,{\rm d}x\geq0
\end{equation*}
which implies that $t_n\leq1$ by using $(f_1)$. Then, by $t_nv_n\in\mathcal{N}_{\varepsilon}$ and \eqref{equ2-3} (implies that $f(su)su-4F(su)$ is nondecreasing in $s\in(0,+\infty)$), we have that
\begin{align*}
c_{\varepsilon}\leq I_{\varepsilon}(t_n v_n)&=I_{\varepsilon}(t_nv_n)-\frac{1}{4}\langle I_{\varepsilon}'(t_nv_n),t_n v_n\rangle\\
&=\frac{1}{4}t_n^2\|v_n\|_{\varepsilon}^2+\frac{1}{4}\int_{\mathbb{R}^3}K(\varepsilon x)\Big(f(t_nv_n)t_nv_n-4F(t_nv_n)\Big)\,{\rm d}x\\
&+\frac{4s-3}{12}t_n^{2_s^{\ast}}\int_{\mathbb{R}^3}Q(\varepsilon x)|v_n|^{2_s^{\ast}}\,{\rm d}x\\
&\leq \Psi(v_n)\rightarrow\alpha <l=c_{\varepsilon}
\end{align*}
which is a contradiction.

Case 2. Up to a subsequence, we may assume that $\langle I_{\varepsilon}'(v_n),v_n\rangle>0$ and $\langle I_{\varepsilon}'(w_n),w_n\rangle>0$.

By \eqref{equ3-23}, we see that $\langle I_{\varepsilon}'(v_n),v_n\rangle=o_n(1)$ and $\langle I_{\varepsilon}'(w_n),w_n\rangle=o_n(1)$. In view of \eqref{equ3-16}--\eqref{equ3-21}, we have that
\begin{equation}\label{equ3-26}
I_{\varepsilon}(u_n)\geq I_{\varepsilon}(v_n)+I_{\varepsilon}(w_n)+o_n(1).
\end{equation}
If the sequence $\{y_n\}\subset\mathbb{R}^3$ is bounded, we will deduce a contradiction by comparing $I_{\varepsilon}(w_n)$ and $m_{\infty}$. In fact, by the assumptions $(V)$, $(K)$ and $(Q)$, for any $\delta>0$, there exists $R_0>0$, such that
\begin{equation}\label{equ3-27}
|K(\varepsilon x)-K_{\infty}|\leq \delta,\quad  |Q(\varepsilon x)-Q_{\infty}|\leq \delta,\quad V(\varepsilon x)-V_{\infty}>-\delta, \quad \forall |x|\geq R_0/\varepsilon.
\end{equation}
By the boundedness of $\{y_n\}\subset\mathbb{R}^3$, there exists $R'>0$ such that $|y_n|\leq R'$. Thus, $\mathbb{R}^3\backslash B_{R_n}(y_n)\subset\mathbb{R}^3\backslash B_{R_n-R'}(0)\subset \mathbb{R}^3\backslash B_{R_0/\varepsilon}(0)$ for $n$ large enough. From \eqref{equ3-27}, we can deduce that
\begin{align*}
\int_{\mathbb{R}^3}\Big(V(\varepsilon x)-V_{\infty}\Big)|w_n|^2\,{\rm d}x&=\int_{|x-y_n|>R_n}\Big(V(\varepsilon x)-V_{\infty}\Big)|w_n|^2\,{\rm d}x\\
&\geq-\delta\int_{|x-y_n|>R_n}|w_n|^2\,{\rm d}x\geq-\delta C,
\end{align*}
and by the arbitrariness of $\delta$, this leads to
\begin{equation}\label{equ3-28}
\int_{\mathbb{R}^3}\Big(V(\varepsilon x)-V_{\infty}\Big)|w_n|^2\,{\rm d}x\geq o_n(1).
\end{equation}
Moreover, it is easy to check that
\begin{equation}\label{equ3-29}
\int_{\mathbb{R}^3}\Big(K(\varepsilon x)-K_{\infty}\Big)F(w_n)\,{\rm d}x=o(1),\quad \int_{\mathbb{R}^3}\Big(K(\varepsilon x)-K_{\infty}\Big)f(w_n)w_n\,{\rm d}x=o_n(1)
\end{equation}
and
\begin{equation}\label{equ3-30}
\int_{\mathbb{R}^3}\Big(Q(\varepsilon x)-Q_{\infty}\Big)|w_n|^{2_s^{\ast}}\,{\rm d}x=o_n(1).
\end{equation}
It follows from \eqref{equ3-28}--\eqref{equ3-30} that
\begin{equation}\label{equ3-31}
I_{\varepsilon}(w_n)\geq I_{\infty}(w_n)+o_n(1)\quad \text{and}\quad o_n(1)=\langle I_{\varepsilon}'(w_n),w_n\rangle\geq\langle I_{\infty}'(w_n),w_n\rangle+o_n(1).
\end{equation}
If $\langle I_{\infty}'(w_n),w_n\rangle\leq0$ for $n$ large enough, similar to the proof of Case 1, we get that there exists $t_n\leq1$ such that $t_nw_n\in \mathcal{N}_{\infty}$. Thus, by \eqref{equ3-28}--\eqref{equ3-30}
\begin{align*}
m_{\infty}\leq I_{\infty}(t_nw_n)&=I_{\infty}(t_nw_n)-\frac{1}{4}\langle I_{\infty}'(t_nw_n),t_n w_n\rangle\\
&=\frac{1}{4}t_n^2\|w_n\|^2+\frac{1}{4}\int_{\mathbb{R}^3}K_{\infty}\Big(f(t_nw_n)t_nw_n-4F(t_nw_n)\Big)\,{\rm d}x\\
&+\frac{4s-3}{12}t_n^{2_s^{\ast}}\int_{\mathbb{R}^3}Q_{\infty}|w_n|^{2_s^{\ast}}\,{\rm d}x\\
&\leq\frac{1}{4}\|w_n\|_{\varepsilon}^2+\frac{1}{4}\int_{\mathbb{R}^3}K(\varepsilon x)\Big(f(w_n)w_n-4F(w_n)\Big)\,{\rm d}x\\
&+\frac{4s-3}{12}\int_{\mathbb{R}^3}Q(\varepsilon x)|w_n|^{2_s^{\ast}}\,{\rm d}x+o_n(1)\\
&=\Psi(w_n)+o_n(1)\rightarrow l-\alpha=c_{\varepsilon}-\alpha<c_{\varepsilon}
\end{align*}
which contradicts with the assumption $c_{\varepsilon}<m_{\infty}$.

Observing that $\langle I_{\infty}'(w_n),w_n\rangle\rightarrow0$ and $\langle I_{\varepsilon}'(v_n),v_n\rangle\rightarrow0$ as $n\rightarrow\infty$ (indeed, $\int_{\mathbb{R}^3}\Big(K_{\infty}f(w_n)w_n+Q_{\infty}|w_n|^{2_s^{\ast}}\Big)\,{\rm d}x\rightarrow A$, $\int_{\mathbb{R}^3}\Big(K(\varepsilon x)f(v_n)v_n+Q(\varepsilon x)|v_n|^{2_s^{\ast}}\Big)\,{\rm d}x\rightarrow B$, where $A,B>0$, otherwise, contradicts with \eqref{equ3-22}), by $(v)$ of Lemma \ref{lem3-1}, there exist two sequences $\{t_n\}\subset\mathbb{R}_{+}$ and $\{s_n\}\subset\mathbb{R}_{+}$ satisfying $t_n\rightarrow1$ and $s_n\rightarrow1$ as $n\rightarrow\infty$, respectively, such that $t_nw_n\in\mathcal{N}_{\infty}$, $s_nv_n\in \mathcal{N}_{\varepsilon}$. Hence, by \eqref{equ3-31}, we get
\begin{equation*}
I_{\varepsilon}(w_n)\geq I_{\infty}(w_n)+o_n(1)=I_{\infty}(t_nw_n)+o_n(1)\geq m_{\infty}+o_n(1)
\end{equation*}
and
\begin{equation*}
I_{\varepsilon}(v_n)=I_{\varepsilon}(s_nv_n)+o_n(1)\geq m_{\varepsilon}+o_n(1)=c_{\infty}+o_n(1).
\end{equation*}
Therefore, by \eqref{equ3-26}, we have $c_{\varepsilon}\geq m_{\infty}+c_{\varepsilon}>m_{\infty}$, a contradiction.

If $\{y_n\}\subset\mathbb{R}^3$ is unbounded, we choose a subsequence, stilled denoted by $\{y_n\}$, such that $|y_n|\geq 2R_n$. Then $B_{2R_n}(y_n)\subset \mathbb{R}^3\backslash B_{R_n}(0)\subset\mathbb{R}^3\backslash B_{R_0/\varepsilon}(0)$. Similarly to the proof \eqref{equ3-28}--\eqref{equ3-30}, we can infer that
\begin{equation*}
\int_{\mathbb{R}^3}\Big(V(\varepsilon x)-V_{\infty}\Big)|v_n|^2\,{\rm d}x\geq o_n(1)\quad \int_{\mathbb{R}^3}\Big(Q(\varepsilon x)-Q_{\infty}\Big)|v_n|^{2_s^{\ast}}\,{\rm d}x=o_n(1)
\end{equation*}
and
\begin{equation*}
\int_{\mathbb{R}^3}\Big(K(\varepsilon x)-K_{\infty}\Big)F(v_n)\,{\rm d}x=o(1),\quad \int_{\mathbb{R}^3}\Big(K(\varepsilon x)-K_{\infty}\Big)f(v_n)v_n\,{\rm d}x=o_n(1)
\end{equation*}
Similarly as to the case that $\{y_n\}$ is bounded, we can obtain a contradiction by comparing $I_{\varepsilon}(v_n)$ with $m_{\infty}$. Thus, dichotomy does not happen.

According to the above arguments, the sequence $\{\rho_n\}$ must be compactness, i.e., there exists $\{y_n\}\subset\mathbb{R}^3$ such that for every $\hat{\delta}>0$, there exists $\widetilde{R}>0$, we have $\int_{\mathbb{R}^3\backslash B_{\widetilde{R}}(y_n)}\rho_n(x)\,{\rm d}x<\hat{\delta}$. By the interpolation inequality, we have that
\begin{equation}\label{equ3-32}
\int_{\mathbb{R}^3\backslash B_{\widetilde{R}}(y_n)}|u_n|^m\,{\rm d}x\leq\Big(\int_{\mathbb{R}^3\backslash B_{\widetilde{R}}(y_n)}|u_n|^2\,{\rm d}x\Big)^{\frac{m\theta_3}{2}}\Big(\int_{\mathbb{R}^3\backslash B_{\widetilde{R}}(y_n)}|u_n|^{2_s^{\ast}}\,{\rm d}x\Big)^{\frac{m(1-\theta_3)}{2_s^{\ast}}}<C\hat{\delta}
\end{equation}
where $m\in[2,2_s^{\ast}]$, $\theta_3\in[0,1]$ satisfies $\frac{1}{m}=\frac{\theta_3}{2}+\frac{1-\theta_3}{2_s^{\ast}}$. This means that the sequence $\{|u_n|^m\}$ with $2\leq m\leq 2_s^{\ast}$ is also compactness.

We claim that the sequence $\{y_n\}$ is bounded. If not, up to a subsequence, we can choose $r_n$ such that $|y_n|\geq r_n\geq \widetilde{R}+R_0/\varepsilon$ with $r_n\rightarrow+\infty$. For $n$ large enough, $B_{\widetilde{R}}(y_n)\subset \mathbb{R}^3\backslash B_{r_n-\widetilde{R}}(0)\subset\mathbb{R}^3\backslash B_{R_0/\varepsilon}(0)$.   By \eqref{equ3-32}, we can infer that
\begin{align*}
\int_{\mathbb{R}^3}\Big(V(\varepsilon x)-V_{\infty}\Big)|u_n|^2\,{\rm d}x&=\int_{B_{\widetilde{R}}(y_n)}+\int_{\mathbb{R}^3\backslash B_{\widetilde{R}}(y_n)}\Big(V(\varepsilon x)-V_{\infty}\Big)|u_n|^2\,{\rm d}x\\
&\geq-\delta C+o_n(1)\geq o_n(1)
\end{align*}
Similarly, we can obtain that
\begin{equation*}
\int_{\mathbb{R}^3}\Big(K(\varepsilon x)-K_{\infty}\Big)F(u_n)\,{\rm d}x=o_n(1),\quad \int_{\mathbb{R}^3}\Big(K(\varepsilon x)-K_{\infty}\Big)f(u_n)u_n\,{\rm d}x=o_n(1),
\end{equation*}
and
\begin{equation*}
\int_{\mathbb{R}^3}\Big(Q(\varepsilon x)-Q_{\infty}\Big)|u_n|^{2_s^{\ast}}\,{\rm d}x=o_n(1).
\end{equation*}
Thus, $I_{\varepsilon}(u_n)\geq I_{\infty}(u_n)+o_n(1)$ and $o_n(1)=\langle I_{\varepsilon}'(u_n),u_n\rangle\geq\langle I_{\infty}'(u_n),u_n\rangle +o_n(1)$.

If $\langle I_{\infty}'(u_n),u_n\rangle\leq0$, by the similar arguments as Case 1, there exists $t_n\leq1$ for sufficiently large $n$ such that $t_nu_n\in N_{\infty}$. Hence,
\begin{align*}
m_{\infty}\leq I_{\infty}(t_nu_n)&=I_{\infty}(t_nu_n)-\frac{1}{4}\langle I_{\infty}'(t_nu_n),t_n u_n\rangle\\
&=\frac{1}{4}t_n^2\|u_n\|^2+\frac{1}{4}\int_{\mathbb{R}^3}K_{\infty}\Big(f(t_nu_n)t_nu_n-4F(t_nu_n)\Big)\,{\rm d}x\\
&+\frac{4s-3}{12}t_n^{2_s^{\ast}}\int_{\mathbb{R}^3}Q_{\infty}|u_n|^{2_s^{\ast}}\,{\rm d}x\\
&\leq\frac{1}{4}\|u_n\|_{\varepsilon}^2+\frac{1}{4}\int_{\mathbb{R}^3}K(\varepsilon x)\Big(f(u_n)w_n-4F(u_n)\Big)\,{\rm d}x\\
&+\frac{4s-3}{12}\int_{\mathbb{R}^3}Q(\varepsilon x)|u_n|^{2_s^{\ast}}\,{\rm d}x+o_n(1)\\
&=I_{\varepsilon}(u_n)-\frac{1}{4}\langle I_{\varepsilon}'(u_n),u_n\rangle+o_n(1)\rightarrow c_{\varepsilon}
\end{align*}
which is a contradiction.

If $\langle I_{\infty}'(u_n),u_n\rangle=o_n(1)$, by $(v)$ of Lemma \ref{lem3-1}, there exists $t_n\rightarrow 1$ such that $t_nu_n\in\mathcal{N}_{\infty}$. Hence, for $n$ large enough, we have that
\begin{equation*}
c_{\varepsilon}=I_{\varepsilon}(u_n)+o_n(1)\geq I_{\infty}(u_n)+o_n(1)=I_{\infty}(t_nu_n)+o_n(1)\geq m_{\infty}+o_n(1)\geq m_{\infty}
\end{equation*}
which is a contradiction. Therefore, the claim is true.

In view of the boundedness of $\{y_n\}$ and $u_n\rightarrow u$ in $L_{loc}^r(\mathbb{R}^3)$ for $2\leq r<2_s^{\ast}$, using \eqref{equ3-32}, it is easy to check that $u_n\rightarrow u$ in $L^r(\mathbb{R}^3)$ for $2\leq r<2_s^{\ast}$. Set $\widetilde{u_n}=u_n-u$, by the weakness convergence and Brezis-Lieb Lemma, one has
\begin{equation}\label{equ3-33}
\|u_n\|_{\varepsilon}^2=\|u\|_{\varepsilon}^2+\|\widetilde{u_n}\|_{\varepsilon}^2+o_n(1)
\end{equation}
and
\begin{equation}\label{equ3-34}
\int_{\mathbb{R}^3}Q(\varepsilon x)|u_n|^{2_s^{\ast}}\,{\rm d}x=\int_{\mathbb{R}^3}Q(\varepsilon x)|u|^{2_s^{\ast}}\,{\rm d}x+\int_{\mathbb{R}^3}Q(\varepsilon x) |\widetilde{u_n}|^{2_s^{\ast}}\,{\rm d}x+o_n(1).
\end{equation}

By H\"{o}lder's inequality and using $u_n\rightarrow u$ in $L^r(\mathbb{R}^3)$ for $2\leq r<2_s^{\ast}$, it is easy to verify that
\begin{equation}\label{equ3-35}
\int_{\mathbb{R}^3}K(\varepsilon x)F(u_n)\,{\rm d}x=\int_{\mathbb{R}^3}K(\varepsilon x)F(u)\,{\rm d}x+o_n(1)
\end{equation}
\begin{equation}\label{equ3-36}
\int_{\mathbb{R}^3}K(\varepsilon x)f(u_n)u_n\,{\rm d}x=\int_{\mathbb{R}^3}K(\varepsilon x)f(u)u\,{\rm d}x+o_n(1)
\end{equation}
\begin{equation}\label{equ3-37}
\int_{\mathbb{R}^3}K(\varepsilon x)f(u_n)v\,{\rm d}x=\int_{\mathbb{R}^3}K(\varepsilon x)f(u)v\,{\rm d}x+o_n(1)
\end{equation}
and
\begin{equation}\label{equ3-38}
\int_{\mathbb{R}^3}Q(\varepsilon x)|u_n|^{2_s^{\ast}-2}u_nv\,{\rm d}x=\int_{\mathbb{R}^3}Q(\varepsilon x)|u|^{2_s^{\ast}-2}uv\,{\rm d}x+o_n(1).
\end{equation}
Therefore, by $(iv)$ of Lemma \ref{lem2-1} and \eqref{equ3-37}--\eqref{equ3-38}, it is easy to see that $\langle I_{\varepsilon}'(u_n),v\rangle\rightarrow\langle I_{\varepsilon}'(u),v\rangle$ for any $v\in H_{\varepsilon}$, i.e., $I_{\varepsilon}'(u)=0$ and then $u\in\mathcal{N}_{\varepsilon}$, so that $I_{\varepsilon}(u)\geq0$. Consequently, by $(iv)$ of Lemma \ref{lem2-1} and \eqref{equ3-33}--\eqref{equ3-38}, we have that
\begin{equation*}
c_{\varepsilon}+o_n(1)=I_{\varepsilon}(u_n)\geq I_{\varepsilon}(u_n)-I_{\varepsilon}(u)=\frac{1}{2}\|\widetilde{u_n}\|_{\varepsilon}^2-\frac{1}{2_s^{\ast}}\int_{\mathbb{R}^3}Q(\varepsilon x) |\widetilde{u_n}|^{2_s^{\ast}}\,{\rm d}x+o_n(1)
\end{equation*}
and
\begin{align*}
o_n(1)=\langle I_{\varepsilon}'(u_n),u_n\rangle-\langle I_{\varepsilon}'(u),u\rangle=\|\widetilde{u_n}\|_{\varepsilon}^2-\int_{\mathbb{R}^3}Q(\varepsilon x) |\widetilde{u_n}|^{2_s^{\ast}}\,{\rm d}x+o_n(1)
\end{align*}
By similar arguments as the proof of vanishing (see \eqref{equ3-13}, \eqref{equ3-14}), it is easy to get a contradiction with the limit $\|\widetilde{u_n}\|_{\varepsilon}^2\rightarrow \mathcal{L}>0$. Thus, $\mathcal{L}=0$ and hence $u_n\rightarrow u$ in $H_{\varepsilon}$.
\end{proof}

\section{Relation between $c_{\varepsilon}$ and $m_0$, $m_{\infty}$}

In this section, we shall compare the energy level $c_{\varepsilon}$ of problem \eqref{equ2-2} with the energy level $m_0$ and $m_{\infty}$ of limit equation \eqref{equ2-5}. For this purpose, we should prove the existence of positive ground state solutions for the autonomous problem \eqref{equ2-5}. For reader's convenience, we rewrite it as follows
\begin{equation}\label{equ4-1}
(-\Delta)^su+\nu u+\phi_u^t u=\kappa f(u)+\mu|u|^{2_s^{\ast}-2}u, \quad \text{in}\,\,\mathbb{R}^3
\end{equation}
where $\mu,\nu,\kappa>0$ are arbitrary positive constants and $\phi_u^t=\int_{\mathbb{R}^3}\frac{u^2(y)}{|x-y|^{3-2t}}\,{\rm d}x$.

Similar to Section 3, we can easily prove that the functional $I_{\nu,\kappa,\mu}$ verifies the mountain pass geometry. Then, applying Theorem 1.15 in \cite{Willem}, there exists a $(PS)_{c_{\nu,\kappa,\mu}}$ sequence $\{u_n\}\subset E_{\nu}$ such that $I_{\nu,\kappa,\mu}(u_n)\rightarrow c_{\nu,\kappa,\mu}$ and $I'_{\nu,\kappa,\mu}(u_n)\rightarrow0$ as $n\rightarrow\infty$, where $c_{\nu,\kappa,\mu}$ can be characterized by the following relations
\begin{align*}
c_{\nu,\kappa,\mu}=\inf_{\gamma\in \Gamma_{\nu,\kappa,\mu}}\max_{t\in[0,1]}I_{\nu,\kappa,\mu}(\gamma(t))=\inf_{u\in\mathcal{N}_{\nu,\kappa,\mu}}I_{\nu,\kappa,\mu}(u)=\inf_{u\in E_{\nu}\backslash\{0\}}\max_{t\geq0}I_{\nu,\kappa,\mu}(tu),
\end{align*}
where $\Gamma_{\nu,\kappa,\mu}=\{\gamma\in C([0,1], E_{\nu})\,\,|\,\, \gamma(0)=0, I_{\nu,\kappa,\mu}(\gamma(1))<0\}$.
For obtaining a compactness of the above sequence, we give the following estimate for $c_{\nu,\kappa,\mu}$.
\begin{lemma}\label{lem4-1}
For any $\nu,\kappa,\mu>0$, the infinimum $c_{\nu,\kappa,\mu}$ satisfies
\begin{equation*}
0<c_{\nu,\kappa,\mu}<\frac{s}{3\mu^{\frac{3-2s}{2s}}}\mathcal{S}_s^{\frac{3}{2s}},
\end{equation*}
where $\mathcal{S}_s$ is the best Sobolev constant defined by \eqref{equ2-1}.
\end{lemma}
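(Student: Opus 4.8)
The positivity $c_{\nu,\kappa,\mu}>0$ is immediate: by $(f_0)$--$(f_2)$ the functional $I_{\nu,\kappa,\mu}$ has the mountain pass geometry exactly as in Lemma \ref{lem3-2}, so $c_{\nu,\kappa,\mu}\ge\alpha>0$. The content of the lemma is the strict upper bound, and the plan is to rerun the Brezis--Nirenberg argument of Lemma \ref{lem3-3}; here it is in fact simpler, since $\nu,\kappa,\mu$ are \emph{constants}, so no analogue of hypothesis $(Q_1)$ is needed and there is no term of the form $\int(Q_0-Q(\varepsilon x))|u_\varepsilon|^{2_s^{\ast}}$ to estimate.

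First I would fix a concentrating test family. Let $\bar u$ be an extremal function for $\mathcal{S}_s$, put $U_\delta(x)=\delta^{-\frac{3-2s}{2}}\bar u(x/\delta)$ and $u_\delta=\psi U_\delta$ with $\psi\in C_0^\infty(\mathbb{R}^3)$, $0\le\psi\le1$, $\psi\equiv1$ near the origin. Applying the Appendix estimates (that is, \eqref{equ3-4}--\eqref{equ3-6} with $x_0=0$ and concentration parameter $\delta$), as $\delta\to0$ one has
\begin{equation*}
\int_{\mathbb{R}^3}|(-\Delta)^{\frac{s}{2}}u_\delta|^2\,{\rm d}x\le\|(-\Delta)^{\frac{s}{2}}\bar u\|_2^2+O(\delta^{3-2s}),\qquad \int_{\mathbb{R}^3}|u_\delta|^{2_s^{\ast}}\,{\rm d}x=\|\bar u\|_{2_s^{\ast}}^{2_s^{\ast}}+O(\delta^3),
\end{equation*}
together with the $L^p$-bounds \eqref{equ3-6}. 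In particular $\int_{\mathbb{R}^3}|u_\delta|^2\,{\rm d}x=O(\delta^{3-2s})$ (since $s>\frac{3}{4}$ gives $2<\frac{3}{3-2s}$); since $s>\frac{3}{4}$ also forces $4s+2t>3$ (Remark \ref{rem3-1}), Lemma \ref{lem2-1}(iii) yields $\int_{\mathbb{R}^3}\phi_{u_\delta}^tu_\delta^2\,{\rm d}x\le C\|u_\delta\|_{12/(3+2t)}^4$ and, arguing exactly as in \eqref{equ3-9}, $\big(\int_{\mathbb{R}^3}|u_\delta|^{12/(3+2t)}\,{\rm d}x\big)^{(3+2t)/3}=o(\delta^{3-2s})$; finally, because $4<q<2_s^{\ast}$ and $s<1$ imply $\frac{3}{3-2s}<4<q$ and $\frac{4s}{3-2s}<4<q$, \eqref{equ3-6} gives $\int_{\mathbb{R}^3}|u_\delta|^q\,{\rm d}x=O(\delta^{3-\frac{(3-2s)q}{2}})$ with $0<3-\frac{(3-2s)q}{2}<3-2s$, so that $\delta^{-(3-2s)}\int_{\mathbb{R}^3}|u_\delta|^q\,{\rm d}x\to+\infty$.

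Next, by the analogue of Lemma \ref{lem3-1}(ii) for $I_{\nu,\kappa,\mu}$ there is a unique $t_\delta>0$ with $t_\delta u_\delta\in\mathcal{N}_{\nu,\kappa,\mu}$ and $\sup_{t\ge0}I_{\nu,\kappa,\mu}(tu_\delta)=I_{\nu,\kappa,\mu}(t_\delta u_\delta)$; using $(f_0)$ (so $F\ge0$) and Lemma \ref{lem2-1}(iii) in the identity $\frac{{\rm d}}{{\rm d}t}I_{\nu,\kappa,\mu}(tu_\delta)\big|_{t=t_\delta}=0$ together with the estimates above, one gets $0<C_1\le t_\delta\le C_2$ uniformly for $\delta$ small, the lower bound following from the usual contradiction ($t_{\delta_n}\to0$ would force $0<c_{\nu,\kappa,\mu}\le I_{\nu,\kappa,\mu}(t_{\delta_n}u_{\delta_n})\to0$). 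Then, using $F(r)\ge\frac{c_0}{q}r^q$ for $r>0$ and discarding the favorable terms,
\begin{align*}
I_{\nu,\kappa,\mu}(t_\delta u_\delta)\le{}&\sup_{t\ge0}\Big(\frac{t^2}{2}\int_{\mathbb{R}^3}|(-\Delta)^{\frac{s}{2}}u_\delta|^2\,{\rm d}x-\frac{\mu\, t^{2_s^{\ast}}}{2_s^{\ast}}\int_{\mathbb{R}^3}|u_\delta|^{2_s^{\ast}}\,{\rm d}x\Big)\\
&+C\int_{\mathbb{R}^3}|u_\delta|^2\,{\rm d}x+C\Big(\int_{\mathbb{R}^3}|u_\delta|^{\frac{12}{3+2t}}\,{\rm d}x\Big)^{\frac{3+2t}{3}}-c\int_{\mathbb{R}^3}|u_\delta|^q\,{\rm d}x,
\end{align*}
where, by the two expansions above (a Rayleigh-quotient computation as in Lemma \ref{lem3-3}), $\sup_{t\ge0}\Big(\frac{t^2}{2}\int_{\mathbb{R}^3}|(-\Delta)^{\frac{s}{2}}u_\delta|^2\,{\rm d}x-\frac{\mu t^{2_s^{\ast}}}{2_s^{\ast}}\int_{\mathbb{R}^3}|u_\delta|^{2_s^{\ast}}\,{\rm d}x\Big)=\frac{s}{3\mu^{(3-2s)/2s}}\mathcal{S}_s^{3/2s}+O(\delta^{3-2s})$. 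Combining the decay rates of the previous paragraph, the negative term $-c\int_{\mathbb{R}^3}|u_\delta|^q\,{\rm d}x$, which is of order $\delta^{3-(3-2s)q/2}$, dominates the $O(\delta^{3-2s})$ remainder and all other positive lower-order terms, hence $I_{\nu,\kappa,\mu}(t_\delta u_\delta)<\frac{s}{3\mu^{(3-2s)/2s}}\mathcal{S}_s^{3/2s}$ for $\delta$ small; since $c_{\nu,\kappa,\mu}=\inf_{u\neq0}\sup_{t\ge0}I_{\nu,\kappa,\mu}(tu)\le\sup_{t\ge0}I_{\nu,\kappa,\mu}(tu_\delta)$, the claim follows. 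As in Lemma \ref{lem3-3}, the only delicate point is the bookkeeping of this competition between the subcritical term $-c\int_{\mathbb{R}^3}|u_\delta|^q\,{\rm d}x$ and the positive remainders coming from the truncation, the mass term $\nu|u|^2$, and the nonlocal term $\frac14\phi_u^tu^2$, which is precisely where the restrictions $s>\frac34$ (hence $4s+2t>3$ and $2<\frac{3}{3-2s}$) and $q>4$ are used.
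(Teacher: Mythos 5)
Your proposal is correct and follows essentially the same route as the paper's proof: the same truncated concentrating bubble family with the Appendix estimates, the same uniform two-sided bounds on $t_\delta$, the same Rayleigh-quotient evaluation of $\sup_{t\ge0}\bigl(\tfrac{t^2}{2}\|(-\Delta)^{\frac{s}{2}}u_\delta\|_2^2-\tfrac{\mu t^{2_s^{\ast}}}{2_s^{\ast}}\|u_\delta\|_{2_s^{\ast}}^{2_s^{\ast}}\bigr)$, and the same Brezis--Nirenberg bookkeeping in which the subcritical term of order $\delta^{3-\frac{(3-2s)q}{2}}$ dominates the $O(\delta^{3-2s})$ remainders from the truncation, the mass term and the nonlocal term. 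The only cosmetic deviations (working with the non-normalized extremal $\bar u$, and using the exponent $q$ from $F(t)\ge\frac{c_0}{q}t^{q}$ where the paper writes $q+1$) do not affect the argument.
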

\begin{proof}
We define
\begin{equation*}
u_{\varepsilon}(x)=\psi(x)U_{\varepsilon}(x),\quad x\in\mathbb{R}^3,
\end{equation*}
where $U_{\varepsilon}(x)=\varepsilon^{-\frac{3-2s}{2}}u^{\ast}(x/\varepsilon)$, $u^{\ast}(x)=\frac{\widetilde{u}(x/\mathcal{S}_s^{\frac{1}{2s}})}{\|\widetilde{u}\|_{2_s^{\ast}}}$, $\kappa\in\mathbb{R}\backslash\{0\}$, $\mu>0$ and $x_0\in\mathbb{R}^3$ are fixed constants, $\widetilde{u}(x)=\kappa(\mu^2+|x-x_0|^2)^{-\frac{3-2s}{2}}$, and $\psi\in C^{\infty}(\mathbb{R}^3)$ such that $0\leq\psi\leq1$ in $\mathbb{R}^3$, $\psi(x)\equiv1$ in $B_{\delta}$ and $\psi\equiv0$ in $\mathbb{R}^3\backslash B_{2\delta}$. From Proposition 21 and Proposition 22 in \cite{SV}, Lemma 3.3 in \cite{Teng2}, we know that
\begin{equation}\label{equ4-2}
\int_{\mathbb{R}^{3}}|(-\Delta)^{\frac{s}{2}}u_{\varepsilon}(x)|^2\,{\rm d}x\leq\mathcal{S}_s^{\frac{3}{2s}}+O(\varepsilon^{3-2s}),
\end{equation}
\begin{equation}\label{equ4-3}
\int_{\mathbb{R}^{3}}|u_{\varepsilon}(x)|^{2_s^{\ast}}\,{\rm d}x=\mathcal{S}_s^{\frac{3}{2s}}+O(\varepsilon^3),
\end{equation}
and
\begin{equation}\label{equ4-4}
\int_{\mathbb{R}^3}|u_{\varepsilon}(x)|^p\,{\rm d}x=\left\{
\begin{array}{ll}
O(\varepsilon^{\frac{(2-p)3+2sp}{2}}),&\hbox{$p>\frac{3}{3-2s}$,} \\
O(\varepsilon^{\frac{(2-p)3+2sp}{2}}|\log\varepsilon|), & \hbox{$p=\frac{3}{3-2s}$,} \\
O(\varepsilon^{\frac{3-2s}{2}p}), & \hbox{$1< p<\frac{3}{3-2s}$.}
\end{array}
\right.
\end{equation}

Similar to the proof of $(ii)$ of Lemma \ref{lem3-1}, there exists $t_{\varepsilon}>0$ such that $\sup\limits_{t\geq0}I_{\nu,\kappa,\mu}(tu_{\varepsilon})=I_{\nu,\kappa,\mu}(t_{\varepsilon}u_{\varepsilon})$. Hence $\frac{{\rm d}I_{\nu,\kappa,\mu}(tu_{\varepsilon})}{{\rm d} t}\Big|_{t=t_{\varepsilon}}=0$, that is
\begin{align*}
t_{\varepsilon}^2\int_{\mathbb{R}^3}(|(-\Delta)^{\frac{s}{2}}u_{\varepsilon}|^2+\nu u_{\varepsilon}^2)\,{\rm d}x+t_{\varepsilon}^4\int_{\mathbb{R}^3}\phi_{u_{\varepsilon}}^tu_{\varepsilon}^2\,{\rm d}x=&\kappa\int_{\mathbb{R}^3} f(t_{\varepsilon}u_{\varepsilon})t_{\varepsilon}u_{\varepsilon}\,{\rm d}x\\
&+\mu t_{\varepsilon}^{2_s^{\ast}}\int_{\mathbb{R}^3}|u_{\varepsilon}|^{2_s^{\ast}}\,{\rm d}x.
\end{align*}
By $(f_0)$, we have that
\begin{align*}
t_{\varepsilon}^2\int_{\mathbb{R}^3}(|(-\Delta)^{\frac{s}{2}}u_{\varepsilon}|^2+\nu u_{\varepsilon}^2)\,{\rm d}x+t_{\varepsilon}^4\int_{\mathbb{R}^3}\phi_{u_{\varepsilon}}^tu_{\varepsilon}^2\,{\rm d}x\geq\mu t_{\varepsilon}^{2_s^{\ast}}\int_{\mathbb{R}^3}|u_{\varepsilon}|^{2_s^{\ast}}\,{\rm d}x.
\end{align*}
It follows from $(iii)$ of Lemma \ref{lem2-1} that
\begin{align}\label{equ4-5}
t_{\varepsilon}^{2_s^{\ast}}\leq\frac{1}{\mu\|u_{\varepsilon}\|_{2_s^{\ast}}^{2_s^{\ast}}}\Big(t_{\varepsilon}^2\int_{\mathbb{R}^3}(|(-\Delta)^{\frac{s}{2}}u_{\varepsilon}|^2+\nu u_{\varepsilon}^2)\,{\rm d}x+Ct_{\varepsilon}^4\|u_{\varepsilon}\|_{\frac{12}{3+2t}}^4\Big).
\end{align}
Therefore, \eqref{equ4-2}, \eqref{equ4-3}, \eqref{equ4-4} and \eqref{equ4-5} imply that $|t_{\varepsilon}|\leq C_1$, where $C_1$ is independent of $\varepsilon>0$ small. On the other hand, we may assume that there is a positive constant $C_2>0$ such that $t_{\varepsilon}\geq C_2>0$ for $\varepsilon>0$ small. Otherwise, we can find a sequence $\varepsilon_n\rightarrow0$ as $n\rightarrow\infty$ such that $t_{\varepsilon_n}\rightarrow0$ as $n\rightarrow\infty$. Therefore
\begin{equation*}
0<c_{\nu,\kappa,\mu}\leq \sup_{t\geq0}I_{\nu,\kappa,\mu}(tu_{\varepsilon_n})=I_{\nu,\kappa,\mu}(t_{\varepsilon_n}u_{\varepsilon_n})\rightarrow0,
\end{equation*}
which is a contradiction.

Denote $g(t)=\frac{t^2}{2}\int_{\mathbb{R}^3}|(-\Delta)^{\frac{s}{2}}u_{\varepsilon}|^2\,{\rm d}x-\frac{\mu t^{2_s^{\ast}}}{2_s^{\ast}}\int_{\mathbb{R}^3}|u_{\varepsilon}|^{2_s^{\ast}}\,{\rm d}x$, by \eqref{equ4-2} and \eqref{equ4-3}, it is easy to check that
\begin{align*}
\sup_{t\geq0}g(t)=\frac{s}{3}\frac{\Big(\int_{\mathbb{R}^3}|(-\Delta)^{\frac{s}{2}}u_{\varepsilon}|^2\,{\rm d}x\Big)^{\frac{3}{2s}}}{\Big(\mu\int_{\mathbb{R}^3}|u_{\varepsilon}|^{2_s^{\ast}}\,{\rm d}x\Big)^{\frac{3-2s}{2s}}}&=\frac{s}{3\mu^{\frac{3-2s}{2s}}}\frac{\Big(\mathcal{S}_s^{\frac{3}{2s}}+O(\varepsilon^{3-2s})\Big)^{\frac{3}{2s}}}{\Big(\mathcal{S}_s^{\frac{3}{2s}}+O(\varepsilon^3)\Big)^{\frac{3-2s}{2s}}}\\
&=\frac{s}{3\mu^{\frac{3-2s}{2s}}}\Big(\mathcal{S}_s^{\frac{3}{2s}}+O(\varepsilon^3)\Big)\Big(\frac{\mathcal{S}_s^{\frac{3}{2s}}+O(\varepsilon^{3-2s})}{\mathcal{S}_s^{\frac{3}{2s}}+O(\varepsilon^3)}\Big)^{\frac{3}{2s}}\\
&\leq\frac{s}{3\mu^{\frac{3-2s}{2s}}}\mathcal{S}_s^{\frac{3}{2s}}+O(\varepsilon^{3-2s}).
\end{align*}
Thus
\begin{align*}
I_{\nu,\kappa,\mu}(t_{\varepsilon}u_{\varepsilon})&\leq \sup_{t\geq0}g(t)+C\int_{\mathbb{R}^3}|u_{\varepsilon}|^2\, {\rm d}x+C\int_{\mathbb{R}^3}\phi_{u_{\varepsilon}}^tu_{\varepsilon}^2\,{\rm d}x-C\int_{\mathbb{R}^3}|u_{\varepsilon}|^{q+1}\, {\rm d}x\\
&\leq\frac{s}{3\mu^{\frac{3-2s}{2s}}}\mathcal{S}_s^{\frac{3}{2s}}+O(\varepsilon^{3-2s})+C\int_{\mathbb{R}^3}|u_{\varepsilon}|^2\, {\rm d}x+C\Big(\int_{\mathbb{R}^3}|u_{\varepsilon}|^{\frac{12}{3+2t}}\,{\rm d}x\Big)^{\frac{3+2t}{3}}\\
&-C\int_{\mathbb{R}^3}|u_{\varepsilon}|^{q+1}\, {\rm d}x\\
&\leq\frac{s}{3\mu^{\frac{3-2s}{2s}}}\mathcal{S}_s^{\frac{3}{2s}}+O(\varepsilon^{3-2s})+C\Big(\int_{\mathbb{R}^3}|u_{\varepsilon}|^{\frac{12}{3+2t}}\,{\rm d}x\Big)^{\frac{3+2t}{3}}-C\int_{\mathbb{R}^3}|u_{\varepsilon}|^{q+1}\, {\rm d}x.
\end{align*}
where we have used \eqref{equ4-4} and $s>\frac{3}{4}$ which implies $2<\frac{3}{3-2s}$.

Observing that
\begin{align*}
\lim_{\varepsilon\rightarrow0^{+}}\frac{\Big(\int_{\mathbb{R}^3}|u_{\varepsilon}|^{\frac{12}{3+2t}}\,{\rm d}x\Big)^{\frac{3+2t}{3}}}{\varepsilon^{3-2s}}\leq\left\{
                                                                                                                            \begin{array}{ll}
                                                                                                                              \lim\limits_{\varepsilon\rightarrow0^{+}}\frac{O(\varepsilon^{2t+4s-3})}{\varepsilon^{3-2s}}=0, & \hbox{$\frac{12}{3+2t}>\frac{3}{3-2s}$,} \\
                                                                                                                              \lim\limits_{\varepsilon\rightarrow0^{+}}\frac{O(\varepsilon^{2t+4s-3}|\log\varepsilon|)}{\varepsilon^{3-2s}}=0, & \hbox{$\frac{12}{3+2t}=\frac{3}{3-2s}$,} \\
                                                                                                                             \lim\limits_{\varepsilon\rightarrow0^{+}} \frac{O(\varepsilon^{2(3-2s)}|\log\varepsilon|)}{\varepsilon^{3-2s}}=0, & \hbox{$\frac{12}{3+2t}<\frac{3}{3-2s}$.}
                                                                                                                            \end{array}
                                                                                                                          \right.
\end{align*}
Since $s>\frac{3}{4}$ and $q>3$, then $q+1>\frac{3}{3-2s}$, $2s-\frac{3-2s}{2}(q+1)<0$. Thus
\begin{align*}
\lim_{\varepsilon\rightarrow0^{+}}\frac{\int_{\mathbb{R}^3}|u_{\varepsilon}|^{q+1}\,{\rm d}x}{\varepsilon^{3-2s}}=\lim_{\varepsilon\rightarrow0^{+}}\frac{O(\varepsilon^{3-\frac{3-2s}{2}(q+1)})}{\varepsilon^{3-2s}}=+\infty.
\end{align*}
Therefore, we have proved that for $\varepsilon$ small enough, there holds
\begin{equation*}
I_{\nu,\kappa,\mu}(t_{\varepsilon}u_{\varepsilon})<\frac{s}{3\mu^{\frac{3-2s}{2s}}}\mathcal{S}_s^{\frac{3}{2s}}
\end{equation*}
and thus the proof is completed.
\end{proof}
Similar arguments to Lemma \ref{lem3-4}, we can obtain the compactness of the $(PS)_{c_{\nu,\kappa,\mu}}$ sequence. It is stated as follows.
\begin{lemma}\label{lem4-2}
Assume $c_{\nu,\kappa,\mu}<\frac{s}{3\mu^{\frac{3-2s}{2s}}}\mathcal{S}_s^{\frac{3}{2s}}$ and let $\{u_n\}$ be the $(PS)$ sequence at the level $c_{\nu,\kappa,\mu}$. Then there exists $\{y_n\}\subset\mathbb{R}^3$ such that for every $\xi>0$, there exists $\widehat{R}>0$ such that
\begin{equation*}
\int_{\mathbb{R}^3\backslash B_{\widehat{R}}(y_n)}\Big(|(-\Delta)^{\frac{s}{2}}u_n|^2+|u_n|^2+|u_n|^{2_s^{\ast}}\Big)\,{\rm d}x<\xi.
\end{equation*}
\end{lemma}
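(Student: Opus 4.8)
The plan is to mimic the proof of Lemma~\ref{lem3-4}; it is in fact simpler here because $I_{\nu,\kappa,\mu}$ is translation invariant, so neither a comparison with $m_\infty$ nor a control of the location of the concentration points will be needed. First I would note that $\{u_n\}$ is bounded in $E_\nu=H^s(\mathbb{R}^3)$ (standard, using \eqref{equ2-3}), pass to a subsequence with $u_n\rightharpoonup u$ in $H^s(\mathbb{R}^3)$, $u_n\to u$ in $L_{loc}^r(\mathbb{R}^3)$ for $2\le r<2_s^{\ast}$ and a.e., and introduce the density
\[
\rho_n(x)=\frac{1}{4}|(-\Delta)^{\frac{s}{2}}u_n|^2+\frac{\nu}{4}|u_n|^2+\frac{\kappa}{4}\bigl(f(u_n)u_n-4F(u_n)\bigr)+\frac{4s-3}{12}\mu|u_n|^{2_s^{\ast}},
\]
which, by \eqref{equ2-3} and $s>\frac{3}{4}$, is nonnegative, bounded in $L^1(\mathbb{R}^3)$, and obeys $\Psi(u_n):=\|\rho_n\|_1=I_{\nu,\kappa,\mu}(u_n)-\frac{1}{4}\langle I_{\nu,\kappa,\mu}'(u_n),u_n\rangle+o_n(1)\to c_{\nu,\kappa,\mu}=:l>0$. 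I would then apply Proposition~\ref{pro2-1} to $\{\rho_n\}$ and exclude the vanishing and dichotomy alternatives, which leaves compactness.

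To rule out vanishing: if $\{\rho_n\}$ vanishes then so does $\{|u_n|^2\}$, so $u_n\to0$ in $L^r(\mathbb{R}^3)$ for $2<r<2_s^{\ast}$ by Lemma~\ref{lem2-2}, whence $\kappa\int F(u_n)\to0$, $\kappa\int f(u_n)u_n\to0$ and $\int\phi_{u_n}^tu_n^2\to0$ by \eqref{equ2-2} and $(iii)$ of Lemma~\ref{lem2-1}. The functional and its derivative reduce to those of the pure critical problem, $I_{\nu,\kappa,\mu}(u_n)=\frac{1}{2}\|u_n\|_{E_\nu}^2-\frac{\mu}{2_s^{\ast}}\int|u_n|^{2_s^{\ast}}+o_n(1)$ and $o_n(1)=\|u_n\|_{E_\nu}^2-\mu\int|u_n|^{2_s^{\ast}}+o_n(1)$; letting $\|u_n\|_{E_\nu}^2\to\mathcal{L}$ (necessarily $\mathcal{L}>0$, otherwise $c_{\nu,\kappa,\mu}=0$) and applying the Sobolev inequality \eqref{equ2-1} one gets $\mathcal{L}\ge\mu^{-\frac{3-2s}{2s}}\mathcal{S}_s^{\frac{3}{2s}}$, hence $c_{\nu,\kappa,\mu}=\frac{s}{3}\mathcal{L}\ge\frac{s}{3\mu^{\frac{3-2s}{2s}}}\mathcal{S}_s^{\frac{3}{2s}}$, contradicting the hypothesis. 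This is exactly where the bound $c_{\nu,\kappa,\mu}<\frac{s}{3\mu^{\frac{3-2s}{2s}}}\mathcal{S}_s^{\frac{3}{2s}}$ is used.

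To rule out dichotomy, I would argue by contradiction: with $\alpha\in(0,l)$, $\{y_n\}$ and $R_n\to\infty$ as in \eqref{equ3-15}, set $v_n=\xi(|x-y_n|/R_n)u_n$, $w_n=(1-\xi(|x-y_n|/R_n))u_n$ and establish the splittings of all terms of $I_{\nu,\kappa,\mu}$ and of $\langle I_{\nu,\kappa,\mu}'(\cdot),\cdot\rangle$ exactly as in \eqref{equ3-16}--\eqref{equ3-21} (the nonlocal quadratic term being split as in Lemma 3.4 of \cite{Teng3}, the Hartree term giving only superadditivity). This yields $\Psi(u_n)\ge\Psi(v_n)+\Psi(w_n)+o_n(1)$, hence $\Psi(v_n)\to\alpha$, $\Psi(w_n)\to l-\alpha$, together with $\langle I_{\nu,\kappa,\mu}'(v_n),v_n\rangle+\langle I_{\nu,\kappa,\mu}'(w_n),w_n\rangle\le o_n(1)$. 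If along a subsequence $\langle I_{\nu,\kappa,\mu}'(v_n),v_n\rangle\le0$ (the case of $w_n$ being identical), then by $(f_1)$ there is $t_n\le1$ with $t_nv_n\in\mathcal{N}_{\nu,\kappa,\mu}$, and since each summand of $\Psi$ is nondecreasing under $v\mapsto tv$ for $t\in(0,1)$ (the $F$-term because $\frac{1}{4}f(t)t-F(t)$ is increasing by \eqref{equ2-3}), one obtains $c_{\nu,\kappa,\mu}\le I_{\nu,\kappa,\mu}(t_nv_n)=\Psi(t_nv_n)\le\Psi(v_n)\to\alpha<l=c_{\nu,\kappa,\mu}$, a contradiction. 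Otherwise both $\langle I_{\nu,\kappa,\mu}'(v_n),v_n\rangle$ and $\langle I_{\nu,\kappa,\mu}'(w_n),w_n\rangle$ are positive, hence $o_n(1)$; since $\Psi(v_n)\to\alpha>0$ and $\Psi(w_n)\to l-\alpha>0$ forbid $\|v_n\|_{E_\nu}\to0$ or $\|w_n\|_{E_\nu}\to0$, the analogue of $(v)$ of Lemma~\ref{lem3-1} for $I_{\nu,\kappa,\mu}$ (which holds verbatim since $E_\nu=H^s(\mathbb{R}^3)$ and $I_{\nu,\kappa,\mu}$ has the same structure as $I_\varepsilon$) provides $t_n,s_n\to1$ with $t_nv_n,s_nw_n\in\mathcal{N}_{\nu,\kappa,\mu}$, so that $I_{\nu,\kappa,\mu}(v_n),I_{\nu,\kappa,\mu}(w_n)\ge c_{\nu,\kappa,\mu}+o_n(1)$; combined with $I_{\nu,\kappa,\mu}(u_n)\ge I_{\nu,\kappa,\mu}(v_n)+I_{\nu,\kappa,\mu}(w_n)+o_n(1)$ and $I_{\nu,\kappa,\mu}(u_n)\to c_{\nu,\kappa,\mu}$ this gives $c_{\nu,\kappa,\mu}\ge2c_{\nu,\kappa,\mu}$, i.e. $c_{\nu,\kappa,\mu}\le0$, again impossible. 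Note that translation invariance of $I_{\nu,\kappa,\mu}$ makes the bounded/unbounded distinction for $\{y_n\}$ and the comparison with $m_\infty$ (both needed in Lemma~\ref{lem3-4}) superfluous here.

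With vanishing and dichotomy excluded, $\{\rho_n\}$ is compact: there is $\{y_n\}\subset\mathbb{R}^3$ such that for every $\hat\delta>0$ some $\widetilde R>0$ satisfies $\int_{\mathbb{R}^3\backslash B_{\widetilde R}(y_n)}\rho_n\,{\rm d}x<\hat\delta$ for all $n$. Since $\rho_n\ge\frac{1}{4}|(-\Delta)^{\frac{s}{2}}u_n|^2$, $\rho_n\ge\frac{\nu}{4}|u_n|^2$ and $\rho_n\ge\frac{4s-3}{12}\mu|u_n|^{2_s^{\ast}}$ with $4s-3>0$, choosing $\hat\delta$ small in terms of $\nu,\mu,s$ and setting $\widehat R=\widetilde R$ gives $\int_{\mathbb{R}^3\backslash B_{\widehat R}(y_n)}(|(-\Delta)^{\frac{s}{2}}u_n|^2+|u_n|^2+|u_n|^{2_s^{\ast}})\,{\rm d}x<\xi$, which is the assertion. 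The main obstacle will be the dichotomy step — specifically verifying the splittings \eqref{equ3-16}--\eqref{equ3-21} for the nonlocal operator $(-\Delta)^s$ and for the Hartree term on the annulus $B_{2R_n}(y_n)\backslash B_{R_n}(y_n)$ — but this is entirely parallel to (and lighter than) the corresponding argument in Lemma~\ref{lem3-4}.
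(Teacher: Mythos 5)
Your proof is correct and follows essentially the same approach as the paper's: boundedness, concentration–compactness on the density $\rho_n$, ruling out vanishing via the strict bound $c_{\nu,\kappa,\mu}<\frac{s}{3\mu^{(3-2s)/(2s)}}\mathcal{S}_s^{3/(2s)}$, ruling out dichotomy by the Nehari-manifold projection argument (Case 1 via monotonicity of $\Psi$ under $v\mapsto tv$ with $t\le1$, Case 2 via $(v)$ of Lemma~\ref{lem3-1}), with translation invariance replacing the $m_\infty$ comparison of Lemma~\ref{lem3-4} by the direct inequality $c_{\nu,\kappa,\mu}\ge 2c_{\nu,\kappa,\mu}$. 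The paper compresses the vanishing and Case~1 steps into ``checking the proof of Lemma~\ref{lem3-4} line by line'' and only writes out Case~2; you have spelled out those steps explicitly, but the underlying argument is the same.
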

\begin{proof}
It is easy to verify that the $(PS)_{c_{\nu,\kappa,\mu}}$ sequence $\{u_n\}$ which satisfying $I_{\nu,\kappa,\mu}(u_n)\rightarrow c_{\nu,\kappa,\mu}$ and $I'_{\nu,\kappa,\mu}(u_n)\rightarrow0$ as $n\rightarrow\infty$, is bounded in $E_{\nu}$.

Let $\rho_n(x)=\frac{1}{4}|(\Delta)^{\frac{s}{2}}u_n|^2+\frac{\nu}{4}|u_n|^2+\frac{\kappa}{4}(f(u_n)u_n-4F(u_n))+\frac{4s-3}{12}\mu|u_n|^{2_s^{\ast}}$, then $\rho_n\in L^1(\mathbb{R}^3)$ is also bounded.

Checking the proof of Lemma \ref{lem3-4} line by line, we find that it is only to prove the Case 2.

Case 2. Up to a subsequence, we may assume that $\langle I_{\nu,\kappa,\mu}'(v_n),v_n\rangle>0$ and $\langle I_{\nu,\kappa,\mu}'(w_n),w_n\rangle>0$.
Thus,  $\langle I_{\nu,\kappa,\mu}'(v_n),v_n\rangle=o_n(1)$ and $\langle I_{\nu,\kappa,\mu}'(w_n),w_n\rangle=o_n(1)$. Similar to the proof of $(v)$ of Lemma \ref{lem3-1}, there exist two sequences $\{t_n\}\subset\mathbb{R}_{+}$ and $\{s_n\}\subset\mathbb{R}_{+}$ satisfying $t_n\rightarrow1$ and $s_n\rightarrow1$ as $n\rightarrow\infty$, respectively, such that $t_nw_n\in\mathcal{N}_{\nu,\kappa,\mu}$, $s_nv_n\in \mathcal{N}_{\nu,\kappa,\mu}$. Therefore,
\begin{equation*}
I_{\nu,\kappa,\mu}(v_n)=I_{\nu,\kappa,\mu}(s_nv_n)+o_n(1),\quad I_{\nu,\kappa,\mu}(w_n)=I_{\nu,\kappa,\mu}(t_nw_n)+o_n(1)
\end{equation*}
which leads to a contradiction that
\begin{align*}
c_{\nu,\kappa,\mu}&=I_{\nu,\kappa,\mu}(u_n)+o_n(1)\geq I_{\nu,\kappa,\mu}(v_n)+I_{\nu,\kappa,\mu}(w_n)+o_n(1)\\
&= I_{\nu,\kappa,\mu}(s_nv_n)+I_{\nu,\kappa,\mu}(t_nw_n)+o_n(1)\\
&\geq 2c_{\nu,\kappa,\mu}+o_n(1).
\end{align*}
Hence, dichotomy does not happen.  The sequence $\{\rho_n\}$ must be compactness, i.e., there exists $\{y_n\}\subset\mathbb{R}^3$ such that for every $\xi>0$, there exists $\widehat{R}>0$, we have $\int_{\mathbb{R}^3\backslash B_{\widehat{R}}(y_n)}\rho_n(x)\,{\rm d}x<\xi$. The proof is completed.
\end{proof}
\begin{remark}\label{rem4-1}
By the interpolation inequality, we could show that the sequence $\{|u_n|^m\}$ with $2\leq m\leq 2_s^{\ast}$ is also compactness.
\end{remark}
\begin{proposition}\label{pro4-1}
Assume that $(f_0)$--$(f_2)$ hold. Then problem \eqref{equ2-5} has at least a positive ground state solution in $E_{\nu}$ satisfying $\lim\limits_{|x|\rightarrow\infty}u(x)=0$.
\end{proposition}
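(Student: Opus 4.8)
The strategy is to produce a nontrivial critical point of $I_{\nu,\kappa,\mu}$ at the mountain--pass level $c_{\nu,\kappa,\mu}$ by extracting, after a suitable translation, a strongly convergent subsequence of a Palais--Smale sequence, and then to upgrade the resulting solution to a positive one that vanishes at infinity. I would start from the $(PS)_{c_{\nu,\kappa,\mu}}$ sequence $\{u_n\}\subset E_{\nu}$ introduced just before Lemma \ref{lem4-1}; it is bounded in $E_{\nu}$, and since Lemma \ref{lem4-1} gives $c_{\nu,\kappa,\mu}<\frac{s}{3\mu^{(3-2s)/(2s)}}\mathcal{S}_s^{3/(2s)}$, Lemma \ref{lem4-2} produces points $y_n\in\mathbb{R}^3$ around which the sequence is tight. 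Because \eqref{equ4-1} is autonomous, $v_n:=u_n(\cdot+y_n)$ is again a $(PS)_{c_{\nu,\kappa,\mu}}$ sequence, with the same tightness now centred at the origin. Passing to a subsequence, $v_n\rightharpoonup v$ in $E_{\nu}$, $v_n\to v$ in $L^r_{\mathrm{loc}}(\mathbb{R}^3)$ for $1\leq r<2_s^{\ast}$ and a.e.\ in $\mathbb{R}^3$, and the tightness together with Remark \ref{rem4-1} upgrades this to $v_n\to v$ in $L^r(\mathbb{R}^3)$ for every $2\leq r<2_s^{\ast}$.

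Next I would pass to the limit in $I'_{\nu,\kappa,\mu}(v_n)\to0$: by \eqref{equ2-2} and the strong $L^r$ convergence the term $\int f(v_n)\varphi$ converges, Lemma \ref{lem2-1}$(iv)$ handles $\int\phi_{v_n}^t v_n\varphi$, and $|v_n|^{2_s^{\ast}-2}v_n\rightharpoonup|v|^{2_s^{\ast}-2}v$ in $L^{2_s^{\ast}/(2_s^{\ast}-1)}(\mathbb{R}^3)$; hence $\langle I'_{\nu,\kappa,\mu}(v),\varphi\rangle=0$ for all $\varphi\in E_{\nu}$, i.e.\ $v$ solves \eqref{equ4-1}. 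To rule out $v=0$ I would argue by contradiction: if $v=0$, then $\int\phi_{v_n}^t v_n^2\to0$ by Lemma \ref{lem2-1}$(iii)$ and $\kappa\int f(v_n)v_n\to0$ by \eqref{equ2-2}, so $\langle I'_{\nu,\kappa,\mu}(v_n),v_n\rangle\to0$ forces $\|v_n\|_{E_{\nu}}^2=\mu\int|v_n|^{2_s^{\ast}}+o_n(1)\to\mathcal{L}$, with $\mathcal{L}=\tfrac{3}{s}c_{\nu,\kappa,\mu}>0$; the Sobolev inequality \eqref{equ2-1} then gives $\mathcal{L}\geq\mathcal{S}_s^{3/(2s)}/\mu^{(3-2s)/(2s)}$, so that $c_{\nu,\kappa,\mu}=\tfrac{s}{3}\mathcal{L}\geq\frac{s}{3\mu^{(3-2s)/(2s)}}\mathcal{S}_s^{3/(2s)}$, contradicting Lemma \ref{lem4-1}. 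Thus $v\neq0$, so $v\in\mathcal{N}_{\nu,\kappa,\mu}$ and $I_{\nu,\kappa,\mu}(v)\geq c_{\nu,\kappa,\mu}$.

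For the reverse inequality I would use the Nehari identity $I_{\nu,\kappa,\mu}(v)=I_{\nu,\kappa,\mu}(v)-\tfrac14\langle I'_{\nu,\kappa,\mu}(v),v\rangle=\tfrac14\|v\|_{E_{\nu}}^2+\kappa\!\int\big(\tfrac14 f(v)v-F(v)\big)+\tfrac{4s-3}{12}\mu\!\int|v|^{2_s^{\ast}}$, and the analogous expression for $v_n$: the $f$--term converges along the sequence by strong $L^r$ convergence and \eqref{equ2-2}, whereas the $\|\cdot\|_{E_{\nu}}^2$ and the $\int|\cdot|^{2_s^{\ast}}$ terms are weakly lower semicontinuous, so $I_{\nu,\kappa,\mu}(v)\leq\liminf_n\big(I_{\nu,\kappa,\mu}(v_n)-\tfrac14\langle I'_{\nu,\kappa,\mu}(v_n),v_n\rangle\big)=c_{\nu,\kappa,\mu}$. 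Equality therefore holds, $v$ is a ground state of \eqref{equ4-1}, and the equalities just forced also give $v_n\to v$ strongly in $E_{\nu}$. Positivity is then standard: since $f(t)=0$ for $t\leq0$ and $\|(-\Delta)^{s/2}|v|\|_2\leq\|(-\Delta)^{s/2}v\|_2$, replacing $v$ by $t|v|$ with the unique $t>0$ such that $t|v|\in\mathcal{N}_{\nu,\kappa,\mu}$ (necessarily $t\leq1$, using that $s\mapsto\tfrac14 f(su)su-F(su)$ is nondecreasing by \eqref{equ2-3}) we may assume $v\geq0$, $v\not\equiv0$; as $(-\Delta)^s v+(\nu+\phi_v^t)v=\kappa f(v)+\mu v^{2_s^{\ast}-1}\geq0$ with $\nu+\phi_v^t\geq\nu>0$, the strong maximum principle for $(-\Delta)^s$ yields $v>0$ in $\mathbb{R}^3$.

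It remains to prove $\lim_{|x|\to\infty}v(x)=0$. A Moser--De Giorgi iteration in the spirit of \cite{DMV}, treating $\kappa f(v)+\mu v^{2_s^{\ast}-1}-\phi_v^t v$ as the right--hand side, gives $v\in L^\infty(\mathbb{R}^3)$; since the resulting local $L^\infty$ estimate is controlled by the local $L^2$--norm of $v$ (plus a nonlocal tail that also decays), and $\int_{\mathbb{R}^3\setminus B_R(0)}v^2\,{\rm d}x\to0$ as $R\to\infty$ because $v\in H^s(\mathbb{R}^3)$, one concludes $v(x)\to0$ as $|x|\to\infty$, and in particular $v$ is continuous. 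The step I expect to be most delicate is precisely this last one: carrying the nonlocal term $\phi_v^t$ and the critical term through the $L^\infty$--iteration with constants independent of the ball and then converting the $L^2$--smallness of $v$ far out into uniform pointwise smallness; by contrast, the compactness of the Palais--Smale sequence and the non--triviality of its weak limit are essentially delivered by Lemmas \ref{lem4-1}--\ref{lem4-2}.
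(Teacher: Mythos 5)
Your proposal follows the same overall strategy as the paper for the existence part: take the mountain-pass $(PS)$ sequence, invoke Lemma~\ref{lem4-1} to place $c_{\nu,\kappa,\mu}$ strictly below the critical threshold, use Lemma~\ref{lem4-2} and Remark~\ref{rem4-1} to extract a translated, tight subsequence converging strongly in $L^r$ for $2\le r<2_s^{\ast}$, and then upgrade to strong $E_{\nu}$-convergence and $I_{\nu,\kappa,\mu}(v)=c_{\nu,\kappa,\mu}$, $I_{\nu,\kappa,\mu}'(v)=0$. Your route to strong convergence (pass to the limit in $I'_{\nu,\kappa,\mu}(v_n)\to0$, rule out $v=0$ by the Sobolev/level argument, then use the Nehari-type identity together with weak lower semicontinuity of $\|\cdot\|_{E_\nu}^2$ and of the critical term) is a mild rearrangement of what the paper does (Brezis--Lieb splitting followed by the same $\mathcal{S}_s$-threshold argument applied to $u_n-u$), and both are equivalent.

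Where you genuinely diverge from the paper is in the sign, strict positivity, and decay steps. For nonnegativity, you scale $|v|$ onto the Nehari manifold using $\|(-\Delta)^{s/2}|v|\|_2\le\|(-\Delta)^{s/2}v\|_2$, $\phi_{|v|}^t=\phi_v^t$, $F(|v|)\ge F(v)$, and the monotonicity from \eqref{equ2-3}; the paper instead works with the truncated functional $I_{\nu,\kappa,\mu}^{+}$ and shows $u^{-}=0$ by testing with $u^{-}$ and exploiting the sign of the fractional bilinear form. Both are valid; your route is arguably shorter but relies on the fractional Kato inequality, while the paper's route avoids the absolute-value chain rule entirely. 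For strict positivity, you invoke a "strong maximum principle for $(-\Delta)^s$" with zero-order term $\nu+\phi_v^t\ge\nu>0$; the paper instead first establishes $u\in L^{\infty}$ (via \cite{DMV}), then $u\in C^{1,\alpha}$ for $s>\tfrac34$ (via \cite{S}), which justifies the \emph{pointwise} representation of $(-\Delta)^s u$ and makes the contradiction at a zero of $u$ elementary. Your version is correct in spirit, but you should be explicit about the sense in which the maximum principle is applied (viscosity, or after the same bootstrap), since without some regularity the pointwise evaluation of $(-\Delta)^s u$ at an interior zero is not immediate. Finally, for $u(x)\to0$ as $|x|\to\infty$, you propose transferring the $L^2$-smallness of the far-field into pointwise smallness through a local De Giorgi--Nash--Moser estimate with a decaying nonlocal tail, whereas the paper simply observes that $u\in L^r(\mathbb{R}^3)\cap C^{1,\alpha}(\mathbb{R}^3)$ already forces $u(x)\to0$. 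Your approach can be made to work but, as you yourself flag, carrying the tail term and the critical nonlinearity through the iteration with constants uniform in the centre is a nontrivial amount of extra bookkeeping; the paper's $L^r\cap C^{1,\alpha}$ argument gets the same conclusion with less machinery and is what you would be expected to cite or reproduce here.
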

\begin{proof}
From the above arguments, we see that there exists a $(PS)_{c_{\nu,\kappa,\mu}}$ sequence for $I_{\nu,\kappa,\mu}$. From Lemma \ref{lem4-2}, the sequence $\{u_n\}$ is bounded and verifies the compactness in the sense of Proposition \ref{pro2-1}. Set $\widehat{u_n}(\cdot)=u_n(\cdot+y_n)$. Using the invariance of $\mathbb{R}^3$ by translation, we see that $\{v_n\}$ is a bounded $(PS)_{c_{\nu,\kappa,\mu}}$ sequence and
\begin{equation*}
\int_{\mathbb{R}^3\backslash B_{\widehat{R}}(0)}\Big(|(-\Delta)^{\frac{s}{2}}\widehat{u_n}|^2+|\widehat{u_n}|^2+|\widehat{u_n}|^{2_s^{\ast}}\Big)\,{\rm d}x<\xi.
\end{equation*}
Since $\{\widehat{u_n}\}$ is bounded in $E_{\nu}$, up to a subsequence, still denoted by $\{\widehat{u_n}\}$, there exists $\widehat{u}\in E_{\nu}$ such that $\widehat{u_n}\rightharpoonup\widehat{u}$ in $E_{\nu}$, $\widehat{u_n}\rightarrow\widehat{u}$ in $L_{loc}^r(\mathbb{R}^3)$ for $2\leq r<2_s^{\ast}$. From Remark \ref{rem4-1}, we see that $\int_{\mathbb{R}^3\backslash B_{\widehat{R}}(0)}|\widehat{u_n}|^r\,{\rm d}x\leq \xi$ and hence $\widehat{u_n}\rightarrow\widehat{u}$ in $L^r(\mathbb{R}^3)$ for $2\leq r<2_s^{\ast}$. Similar arguments to the proof of compactness in Lemma \ref{lem3-4}, we can conclude that $\widehat{u_n}\rightarrow\widehat{u}$ in $E_{\nu}$. Hence, $I_{\nu,\kappa,\mu}(\widehat{u})=c_{\nu,\kappa,\mu}$ and $I_{\nu,\kappa,\mu}'(\widehat{u})=0$, that is, $\widehat{u}$ is a nontrivial critical point of $I_{\nu,\kappa,\mu}$. From
the equivalent characterize of mountain value, we conclude that $\widehat{u}$ is a nontrivial ground state solution of problem \eqref{equ2-5}.

Finally, we only need to show that $\widehat{u}$ is positive. For simplicity, we replace $\widehat{u}$ by $u$ in the following discussion. If we replace $I_{\nu,\kappa,\mu}$ by the following functional
\begin{align*}
I_{\nu,\kappa,\mu}^{+}(u)=\frac{1}{2}\|u\|_{E_{\nu}}^2+\frac{1}{4}\int_{\mathbb{R}^3}\phi_u^tu^2\,{\rm d}x-\int_{\mathbb{R}^3}\kappa F(u)\,{\rm d}x-\frac{1}{2_s^{\ast}}\int_{\mathbb{R}^3}\mu|u^{+}|^{2_s^{\ast}}\,{\rm d}x,
\end{align*}
where $u^{\pm}=\max\{\pm u,0\}$, then we see that all the calculations above can be repeated word by word. So, there exists a nontrivial ground state critical point $u\in E_{\nu}$ of $I_{\nu,\kappa,\mu}^{+}$. Hence,
\begin{equation*}
0=\langle (I_{\nu,\kappa,\mu}^{+})'(u),u^{-}\rangle=\langle u,u^{-}\rangle-\int_{\mathbb{R}^3}\phi_u^t(u^{-})^2\,{\rm d}x
\end{equation*}
which implies that
\begin{equation*}
\int_{\mathbb{R}^3}\int_{\mathbb{R}^3}\frac{(u(x)-u(y))(u^{-}(x)-u^{-}(y))}{|x-y|^{3+2s}}\,{\rm d}x\,{\rm d}y\geq0.
\end{equation*}
However, by computation, we have that
\begin{align*}
&\int_{\mathbb{R}^3}\int_{\mathbb{R}^3}\frac{(u(x)-u(y))(u^{-}(x)-u^{-}(y))}{|x-y|^{3+2s}}\,{\rm d}x\,{\rm d}y=\int_{\{u(x)\geq0\}\times\{u(y)<0\}}\frac{(u(x)-u(y))u(y)}{|x-y|^{3+2s}}\,{\rm d}x\,{\rm d}y\\
&+\int_{\{u(x)<0\}\times\{u(y)\geq0\}}\frac{(u(y)-u(x))u(x)}{|x-y|^{3+2s}}\,{\rm d}x\,{\rm d}y-\int_{\{u(x)<0\}\times\{u(y)<0\}}\frac{(u(x)-u(y))^2}{|x-y|^{3+2s}}\,{\rm d}x\,{\rm d}y\leq0.
\end{align*}
Hence,
\begin{equation*}
\int_{\mathbb{R}^3}\int_{\mathbb{R}^3}\frac{(u(x)-u(y))(u^{-}(x)-u^{-}(y))}{|x-y|^{3+2s}}\,{\rm d}x\,{\rm d}y=0
\end{equation*}
which leads to $u^{-}=0$, and thus $u\geq0$ and $u\not\equiv0$.

Let $f(x,u)=\mu|u|^{2_s^{\ast}-1}+\kappa f(u)-\nu u-\phi_u^tu$, by $(f_0)$ and $(f_2)$, it is easy to check that $f(x,u)\leq \mu u^{2_s^{\ast}-1}+1$, for any $u\geq0$. Using Proposition 4.1.1 in \cite{DMV}, we see that $u\in L^{\infty}(\mathbb{R}^3)$ and check the proof of Proposition 4.1.1 word by word, using 4.1.6 and 4.1.7, we can obtain that
\begin{align}\label{equ4-5-0}
\|u\|_{\infty}\leq C(\int_{\mathbb{R}^3}|u|^{2_s^{\ast}}\,{\rm d}x+\int_{\mathbb{R}^3}|u|^{2_s^{\ast}\beta_1}\,{\rm d}x)^{\frac{1}{2_s^{\ast}(\beta_1-1)}}&\leq C\Big[\Big(\int_{\mathbb{R}^3}|u|^{2_s^{\ast}}\,{\rm d}x\Big)^{\frac{1}{2_s^{\ast}(\beta_1-1)}}\nonumber\\
&+\Big(\int_{\mathbb{R}^3}|u|^{2_s^{\ast}}\,{\rm d}x\Big)^{\frac{1}{2(\beta_1-1)}}\Big]
\end{align}
where $C>0$ independent of $u$, $\beta_1=\frac{2_s^{\ast}+1}{2}$. This yields $u\in L^r(\mathbb{R}^3)$ for all $r\in[2,+\infty]$. Moreover, $\phi_u^t\in L^{\infty}(\mathbb{R}^3)$. Therefore,
according to Proposition 2.9 in \cite{S}, and $s>\frac{3}{4}$, we see that $u\in C^{1,\alpha}(\mathbb{R}^3)$ for any $0<\alpha<2s-1$. Thus, by Lemma 3.2 in \cite{DPV}, we have that
\begin{equation*}
(-\Delta)^su(x)=-\frac{1}{2}C(3,s)\int_{\mathbb{R}^3}\frac{u(x+y)+u(x-y)-2u(x)}{|x-y|^{3+2s}}\,{\rm d}x\,{\rm d}y,\quad \forall\,\, x\in\mathbb{R}^3.
\end{equation*}
Assume that there exists $x_0\in\mathbb{R}^3$ such that $u(x_0)=0$, then from $u\geq0$ and $u\not\equiv0$, we get
\begin{equation*}
(-\Delta)^su(x_0)=-\frac{1}{2}C(3,s)\int_{\mathbb{R}^3}\frac{u(x_0+y)+u(x_0-y)}{|x_0-y|^{3+2s}}\,{\rm d}x\,{\rm d}y<0.
\end{equation*}
However, observe that $(-\Delta)^su(x_0)=-\nu u(x_0)-(\phi_u^tu)(x_0)+\kappa f(u(x_0))+\mu u(x_0)^{2_s^{\ast}-1}=0$, a contradiction. Hence, $u(x)>0$, for every $x\in\mathbb{R}^3$. Finally, the fact $u\in L^r(\mathbb{R}^3)\cap C^{1,\alpha}(\mathbb{R}^3)$ for $2\leq r\leq \infty$ implies that $\lim\limits_{|x|\rightarrow\infty}u(x)=0$. The proof is completed.
\end{proof}

\begin{lemma}\label{lem4-3}
There exists $\varepsilon^{\ast}>0$ such that $c_{\varepsilon}<m_{\infty}$ for all $\varepsilon\in(0,\varepsilon^{\ast})$.
\end{lemma}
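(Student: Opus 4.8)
The plan is to bound $c_{\varepsilon}$ from above by the ground state level $m_0$ of the autonomous problem \eqref{equ4-1} with constants $\nu=V_0$, $\kappa=K_0$, $\mu=Q_0$, and to prove separately that $m_0<m_{\infty}$; combining the two gives $c_{\varepsilon}<m_{\infty}$ for $\varepsilon$ small. By Proposition \ref{pro4-1}, applied once with $(\nu,\kappa,\mu)=(V_0,K_0,Q_0)$ and once with $(\nu,\kappa,\mu)=(V_{\infty},K_{\infty},Q_{\infty})$, the functionals $I_0$ and $I_{\infty}$ admit ground states $w_0\in\mathcal{N}_0$ and $w_{\infty}\in\mathcal{N}_{\infty}$ with $I_0(w_0)=m_0=\max_{t\geq0}I_0(tw_0)$ and $I_{\infty}(w_{\infty})=m_{\infty}$.

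\textbf{Step 1: $m_0<m_{\infty}$.} Since $0\in\Theta$, hypothesis $(V)$ gives $V_0<V_{\infty}$ strictly, while $(K)$ and $(Q_0)$ give $K_0=\max_{\mathbb{R}^3}K\geq K_{\infty}$ and $Q_0=\max_{\mathbb{R}^3}Q\geq Q_{\infty}$. Recalling $F\geq0$ by $(f_0)$, for every $u\neq0$ and every $t>0$ one has
\[
I_0(tu)-I_{\infty}(tu)=\frac{t^2}{2}(V_0-V_{\infty})\int_{\mathbb{R}^3}u^2\,{\rm d}x-(K_0-K_{\infty})\int_{\mathbb{R}^3}F(tu)\,{\rm d}x-\frac{t^{2_s^{\ast}}}{2_s^{\ast}}(Q_0-Q_{\infty})\int_{\mathbb{R}^3}|u|^{2_s^{\ast}}\,{\rm d}x<0,
\]
the first term being strictly negative and the remaining two nonpositive. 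Choosing $t_0>0$ with $\max_{t\geq0}I_0(tw_{\infty})=I_0(t_0w_{\infty})$ (such $t_0$ exists by the fibering argument behind $(ii)$ of Lemma \ref{lem3-1}, which applies verbatim to the autonomous functionals), we obtain $m_0\leq I_0(t_0w_{\infty})<I_{\infty}(t_0w_{\infty})\leq\max_{t\geq0}I_{\infty}(tw_{\infty})=m_{\infty}$.

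\textbf{Step 2: $\limsup_{\varepsilon\to0}c_{\varepsilon}\leq m_0$.} I would use $w_0$, suitably truncated, as a test function for $I_\varepsilon$. Fix $\psi_R\in C_c^{\infty}(\mathbb{R}^3)$ with $\psi_R\equiv1$ on $B_R$ and $\psi_R\equiv0$ outside $B_{2R}$, and set $w_R=\psi_R w_0$. Then $w_R\to w_0$ in $H^s(\mathbb{R}^3)$ as $R\to\infty$, so by Lemma \ref{lem2-1} $\int_{\mathbb{R}^3}\phi_{w_R}^tw_R^2\,{\rm d}x\to\int_{\mathbb{R}^3}\phi_{w_0}^tw_0^2\,{\rm d}x$, and hence $\max_{t\geq0}I_0(tw_R)\to\max_{t\geq0}I_0(tw_0)=m_0$. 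For $R$ fixed, $w_R$ has compact support; since $0\in\Theta$ and $V,K,Q$ are continuous, $V(\varepsilon x)\to V_0$, $K(\varepsilon x)\to K_0$, $Q(\varepsilon x)\to Q_0$ uniformly on ${\rm supp}\,w_R$ as $\varepsilon\to0$, and the nonlocal term $\int_{\mathbb{R}^3}\phi_{w_R}^tw_R^2\,{\rm d}x$ does not depend on $\varepsilon$; consequently $I_{\varepsilon}(tw_R)\to I_0(tw_R)$ uniformly for $t$ in bounded sets. By $(ii)$ and $(iii)$ of Lemma \ref{lem3-1} the maximizer $t_{\varepsilon}$ of $t\mapsto I_{\varepsilon}(tw_R)$ stays in a fixed compact subinterval of $(0,\infty)$ for $\varepsilon$ small, whence $\limsup_{\varepsilon\to0}\max_{t\geq0}I_{\varepsilon}(tw_R)\leq\max_{t\geq0}I_0(tw_R)$. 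Using the characterization $c_{\varepsilon}=\inf_{u\in H_{\varepsilon}\backslash\{0\}}\max_{t\geq0}I_{\varepsilon}(tu)\leq\max_{t\geq0}I_{\varepsilon}(tw_R)$ and letting first $\varepsilon\to0$ and then $R\to\infty$ gives $\limsup_{\varepsilon\to0}c_{\varepsilon}\leq m_0$.

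\textbf{Conclusion and main difficulty.} Combining Steps 1 and 2, there is $\varepsilon^{\ast}>0$ with $c_{\varepsilon}\leq m_0+\frac{1}{2}(m_{\infty}-m_0)<m_{\infty}$ for all $\varepsilon\in(0,\varepsilon^{\ast})$, which is the assertion. The only genuinely delicate point is the uniform-in-$\varepsilon$ control in Step 2: one must guarantee that the maximizers $t_{\varepsilon}$ do not escape to $0$ or $\infty$ and that $I_{\varepsilon}(tw_R)\to I_0(tw_R)$ holds uniformly on the relevant $t$-interval. Because $w_R$ has compact support, this reduces to the uniform convergence of $V(\varepsilon x),K(\varepsilon x),Q(\varepsilon x)$ on a fixed bounded set, together with the lower bound $\|u\|_{\varepsilon}\geq C>0$ on $\mathcal{N}_{\varepsilon}$ and the $\varepsilon$-uniform equivalence of $\|\cdot\|_{\varepsilon}$ with $\|\cdot\|$, all already recorded above.
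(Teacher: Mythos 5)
Your proposal is correct, but it takes a genuinely different route from the paper's. You prove two facts: (a) $m_0<m_{\infty}$, by comparing $I_0(tw_{\infty})$ and $I_{\infty}(tw_{\infty})$ along the fiber of a ground state $w_{\infty}$ of $I_{\infty}$; and (b) $\limsup_{\varepsilon\to0}c_{\varepsilon}\leq m_0$, by truncating a ground state $w_0$ of $I_0$ and sending first $\varepsilon\to0$ and then the truncation radius $R\to\infty$. Point (b) is in fact the $\limsup$ half of the paper's Lemma \ref{lem4-4}, which is proved separately there for the concentration analysis; so your argument partly duplicates that work, and it requires the uniform control of the fiber maximizers $t_{\varepsilon}$ that you flag at the end. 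The paper's own proof of Lemma \ref{lem4-3} avoids the double limit and the auxiliary fact $\limsup c_{\varepsilon}\leq m_0$ altogether: it picks an intermediate constant $w$ with $V_0<w<V_{\infty}$, shows $m_{w,K_{\infty},Q_{\infty}}<m_{\infty}$ strictly, truncates a ground state of $I_{w,K_{\infty},Q_{\infty}}$ to produce one compactly supported test function $u$ with $I_{w,K_{\infty},Q_{\infty}}(u)=\max_{t\geq0}I_{w,K_{\infty},Q_{\infty}}(tu)<m_{\infty}$, and then uses the pointwise comparisons $V(\varepsilon x)\leq w$ on ${\rm supp}\,u$ (valid for $\varepsilon$ small since $V(0)=V_0<w$), $K(\varepsilon x)\geq K_{\infty}$ and $Q(\varepsilon x)\geq Q_{\infty}$ to get $\max_{t\geq0}I_{\varepsilon}(tu)\leq I_{w,K_{\infty},Q_{\infty}}(u)<m_{\infty}$ directly, with no $\varepsilon\to0$ limit needed. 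The paper's version thus buys a fixed test function and a simpler uniform estimate; yours is self-contained and conceptually transparent ($m_0<m_{\infty}$ plus $c_{\varepsilon}\to m_0$), at the cost of redoing a piece of Lemma \ref{lem4-4} and slightly more delicate $t_{\varepsilon}$-bookkeeping.
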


\begin{proof}
By the assumption $(V)$, there must exist a $w\in\mathbb{R}_{+}$ such that $V_0<w<V_{\infty}$. Hence, $m_0<m_{w,K_0,Q_0}\leq m_{w,K_{\infty},Q_{\infty}}<m_{\infty}$ owing to $K_0\geq K_{\infty}$ and $Q_0\geq Q_{\infty}$. Indeed, using Proposition \ref{pro4-1}, choose $u$ be a ground state solution of problem \eqref{equ4-1} with $\nu=V_{\infty}$, $\kappa=K_{\infty}$ and $\mu=Q_{\infty}$, such that $I_{\infty}(u)=m_{\infty}$. Then there holds $I_{\infty}(u)=\max\limits_{t\geq0}I_{\infty}(tu)$ and there exists $t_0>0$ such that $t_0u\in\mathcal{N}_{w,K_{\infty},Q_{\infty}}$ and $I_{w,K_{\infty},Q_{\infty}}(t_0u)=\max\limits_{t\geq0}I_{w,K_{\infty},Q_{\infty}}(tu)$. Hence
\begin{align*}
m_{\infty}&=\max_{t\geq0}I_{\infty}(tu)\geq\max_{t\geq0}I_{w,K_{\infty},Q_{\infty}}(tu)=I_{w,K_{\infty},Q_{\infty}}(t_0u)\\
&=I_{\infty}(t_0u)=I_{w,K_{\infty},Q_{\infty}}(t_0u)+\frac{1}{2}(V_{\infty}-w)\int_{\mathbb{R}^3}|u|^2\,{\rm d}x\\
&>I_{w,K_{\infty},Q_{\infty}}(t_0u)\geq m_{w,K_{\infty},Q_{\infty}}.
\end{align*}
Similarly, we can show that $m_0<m_{w,K_{\infty},Q_{\infty}}$.

Taking $\nu=w$, $\kappa=K_{\infty}$ and $\mu=Q_{\infty}$, in view of Proposition \ref{pro4-1}, we know that there exists $v\in \mathcal{N}_{w,K_{\infty},Q_{\infty}}$ such that $I_{w,K_{\infty},Q_{\infty}}(v)=m_{w,K_{\infty},Q_{\infty}}$. Let $\eta\in C_0^{\infty}(\mathbb{R}^3,[0,1])$ be such that $\eta(x)=1$ if $|x|\leq 1$, and $\eta(x)=0$ if $|x|\geq 2$. For $\theta>0$, set $u_{\theta}(x)=\eta(x/\theta) v(x)$. Similar to the proof of $(ii)$ of Lemma \ref{lem3-1}, there exists $t_{\theta}>0$ such that $t_{\theta}u_{\theta}\in \mathcal{N}_{w,K_{\infty},Q_{\infty}}$. We claim that there exists $\theta_0>0$ such that $I_{w,K_{\infty},Q_{\infty}}(t_{\theta_0}u_{\theta_0})<m_{\infty}$. For convenience, we denote $u=t_{\theta_0}u_{\theta_0}$. In fact, if $I_{w,K_{\infty},Q_{\infty}}(t_{\theta}u_{\theta})\geq m_{\infty}$, for all $\theta>0$. From the definition of $\eta(x)$, using Lemma 5 in \cite{PP}, $u_{\theta}\rightarrow v$ in $H^s(\mathbb{R}^3)$ as $\theta\rightarrow\infty$. Recall that $v\in \mathcal{N}_{w,K_{\infty},Q_{\infty}}$, $\mathcal{N}_{w,K_{\infty},Q_{\infty}}$ is closed in $H^s(\mathbb{R}^3)$, thus we obtain that $t_{\theta}\rightarrow1$. Thus
\begin{equation*}
m_{\infty}\leq \liminf_{\theta\rightarrow\infty}I_{w,K_{\infty},Q_{\infty}}(t_{\theta}u_{\theta})=I_{w,K_{\infty},Q_{\infty}}(v)=m_{w,K_{\infty},Q_{\infty}}<m_{\infty}
\end{equation*}
which is impossible. Hence our claim is true. Since the compact support set of $u$ denoted by ${\rm supp}u$ is compact and $V(0)=V_0$, we can choose $\varepsilon^{\ast}>0$ small enough such that $V(\varepsilon x)\leq w$ for all $x\in {\rm supp}u$ and $\varepsilon\in(0,\varepsilon^{\ast})$. Thus, we have
\begin{equation*}
\max_{t\geq0}I_{\varepsilon}(tu)\leq\max_{t\geq0}I_{w,K_{\infty},Q_{\infty}}(tu)=I_{w,K_{\infty},Q_{\infty}}(u)<m_{\infty}\quad \text{for any}\,\, \varepsilon\in(0,\varepsilon^{\ast})
\end{equation*}
which implies that $c_{\varepsilon}<m_{\infty}$ for $\varepsilon\in(0,\varepsilon^{\ast})$.

\end{proof}
\begin{proposition}\label{pro4-2}
Suppose that $(V)$, $(Q_0)$, $(Q_1)$, $(K)$, $(f_0)$--$(f_2)$ hold and $s\in(\frac{3}{4},1)$. Then there exists $\widetilde{\varepsilon}_0>0$ small, such that for each $\varepsilon\in (0,\widetilde{\varepsilon}_0)$, problem \eqref{equ2-4} has at least a positive ground state solution $u_{\varepsilon}$ satisfying $\lim\limits_{|x|\rightarrow\infty}u_{\varepsilon}(x)=0$.
\end{proposition}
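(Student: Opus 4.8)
The plan is to assemble the pieces already prepared in Sections~3 and~4. By Lemma~\ref{lem3-2} and the mountain pass theorem without the Palais--Smale condition (Theorem~1.15 in \cite{Willem}), for every $\varepsilon>0$ there is a $(PS)_{c_{\varepsilon}}$ sequence $\{u_n\}\subset H_{\varepsilon}$ for $I_{\varepsilon}$ at the mountain pass level $c_{\varepsilon}$, which by the discussion following Lemma~\ref{lem3-2} coincides with the ground state level $m_{\varepsilon}=\inf_{\mathcal{N}_{\varepsilon}}I_{\varepsilon}=\inf_{u\in H_{\varepsilon}\setminus\{0\}}\max_{t\geq0}I_{\varepsilon}(tu)$. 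Next, Lemma~\ref{lem3-3} gives $c_{\varepsilon}<\frac{s}{3Q(x_0)^{(3-2s)/2s}}\mathcal{S}_s^{3/(2s)}$ for $\varepsilon$ small, say $\varepsilon<\varepsilon_0$, while Lemma~\ref{lem4-3} gives $c_{\varepsilon}<m_{\infty}$ for $\varepsilon<\varepsilon^{\ast}$. Setting $\widetilde{\varepsilon}_0=\min\{\varepsilon_0,\varepsilon^{\ast}\}$, we obtain $c_{\varepsilon}<\min\{m_{\infty},\frac{s}{3Q(x_0)^{(3-2s)/2s}}\mathcal{S}_s^{3/(2s)}\}$ for all $\varepsilon\in(0,\widetilde{\varepsilon}_0)$, so Lemma~\ref{lem3-4} applies and $I_{\varepsilon}$ satisfies the $(PS)_{c_{\varepsilon}}$ condition. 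Hence, up to a subsequence, $u_n\to u_{\varepsilon}$ strongly in $H_{\varepsilon}$, so $I_{\varepsilon}(u_{\varepsilon})=c_{\varepsilon}=m_{\varepsilon}$ and $I'_{\varepsilon}(u_{\varepsilon})=0$; since $c_{\varepsilon}>0$ we have $u_{\varepsilon}\not\equiv0$, thus $u_{\varepsilon}\in\mathcal{N}_{\varepsilon}$ and $u_{\varepsilon}$ is a ground state solution of \eqref{equ2-4}.

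To obtain positivity I would repeat verbatim the argument in Proposition~\ref{pro4-1}: replace $I_{\varepsilon}$ by the modified functional $I_{\varepsilon}^{+}$ in which $F(u)$ and $|u|^{2_s^{\ast}}$ are replaced by $F(u^{+})$ and $|u^{+}|^{2_s^{\ast}}$ (recall $f(t)=0$ for $t\leq0$ by $(f_0)$). All the mountain pass and compactness estimates above carry over word for word, producing a ground state $u_{\varepsilon}$ of $I_{\varepsilon}^{+}$; testing $(I_{\varepsilon}^{+})'(u_{\varepsilon})$ against $u_{\varepsilon}^{-}$ and using the elementary inequality for the Gagliardo form of $(-\Delta)^s$ exactly as in Proposition~\ref{pro4-1} forces $u_{\varepsilon}^{-}=0$, so $u_{\varepsilon}\geq0$, $u_{\varepsilon}\not\equiv0$, and $u_{\varepsilon}$ solves \eqref{equ2-4}.

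For the regularity and the decay $\lim_{|x|\to\infty}u_{\varepsilon}(x)=0$ I would again mimic Proposition~\ref{pro4-1}. Writing the right-hand side of \eqref{equ2-4} as $g(x,u_{\varepsilon})$ with $g(x,t)\leq Q(\varepsilon x)\,t^{2_s^{\ast}-1}+C$ for $t\geq0$ by $(f_0)$, $(f_2)$ and the boundedness of $V,K,Q$, Proposition~4.1.1 of \cite{DMV} yields $u_{\varepsilon}\in L^{\infty}(\mathbb{R}^3)$ together with the Moser-type bound \eqref{equ4-5-0}, hence $u_{\varepsilon}\in L^r(\mathbb{R}^3)$ for all $r\in[2,\infty]$ and, via the $t$-Riesz potential representation of Lemma~\ref{lem2-1}, $\phi_{u_{\varepsilon}}^t\in L^{\infty}(\mathbb{R}^3)$. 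Then Proposition~2.9 of \cite{S} (using $s>\tfrac34$) gives $u_{\varepsilon}\in C^{1,\alpha}(\mathbb{R}^3)$ for $0<\alpha<2s-1$; the pointwise fractional Laplacian formula (Lemma~3.2 of \cite{DPV}) combined with the equation and $u_{\varepsilon}\geq0$, $u_{\varepsilon}\not\equiv0$ rules out interior zeros (strong maximum principle), so $u_{\varepsilon}>0$ on $\mathbb{R}^3$; and $u_{\varepsilon}\in L^r\cap C^{1,\alpha}$ forces $u_{\varepsilon}(x)\to0$ as $|x|\to\infty$.

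The only genuinely new point compared with Sections~3--4 is the bookkeeping that makes both threshold conditions of Lemma~\ref{lem3-4} hold simultaneously on a single interval $(0,\widetilde{\varepsilon}_0)$; everything else is an assembly of Lemmas~\ref{lem3-2}--\ref{lem3-4} and~\ref{lem4-3} together with the positivity/regularity scheme of Proposition~\ref{pro4-1}. I expect no substantial obstacle here; the most delicate routine step is the $L^{\infty}$ bootstrap, which is handled exactly as in Proposition~\ref{pro4-1} by invoking \cite{DMV}.
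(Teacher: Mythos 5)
Your proof is correct and follows essentially the same route as the paper: both invoke Lemmas 3.2, 3.3, 4.3 and 3.4 to secure the $(PS)_{c_\varepsilon}$ condition below the two energy thresholds for $\varepsilon$ small, obtain a nontrivial ground state critical point, and then carry over the positivity and decay argument from Proposition 4.1 verbatim. The extra bookkeeping you spell out (taking $\widetilde{\varepsilon}_0=\min\{\varepsilon_0,\varepsilon^\ast\}$) is exactly what the paper implicitly does, so there is no gap.
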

\begin{proof}
From Lemma \ref{lem3-2}, Lemma \ref{lem3-3}, Lemma \ref{lem4-3} and Lemma \ref{lem3-4}, there exists a small $\widetilde{\varepsilon}_0>0$, such that $I_{\varepsilon}$ has a nontrivial critical point $u_{\varepsilon}\in H_{\varepsilon}$ for $\varepsilon\in(0,\widetilde{\varepsilon}_0)$. Hence, $u_{\varepsilon}$ is a nontrivial ground state solution of problem \eqref{equ2-4}. Similar arguments as Proposition \ref{pro4-1}, we can prove that for $\varepsilon\in(0,\widetilde{\varepsilon}_0)$, $u_{\varepsilon}$ is a positive ground state solution for problem \eqref{equ2-4} with $\lim\limits_{|x|\rightarrow\infty}u_{\varepsilon}(x)=0$.
\end{proof}
In the end of this section, we will establish the relation between $\lim\limits_{\varepsilon\rightarrow0}c_{\varepsilon}$ and $m_0$. We state the relation as the following Lemma.

\begin{lemma}\label{lem4-4}
$\lim\limits_{\varepsilon\rightarrow0}c_{\varepsilon}=m_0$.
\end{lemma}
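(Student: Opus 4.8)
The plan is to establish the two one–sided estimates
\[
\liminf_{\varepsilon\to0}c_\varepsilon\ge m_0\qquad\text{and}\qquad\limsup_{\varepsilon\to0}c_\varepsilon\le m_0
\]
separately. The first one will in fact hold for \emph{every} $\varepsilon>0$ with no passage to the limit, while the second is a cut-off test–function argument modelled on the proof of Lemma~\ref{lem4-3}.

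For the lower estimate I would exploit the monotonicity built into the hypotheses: since $V_0=\inf_{\mathbb R^3}V$, $K_0=\max_{\mathbb R^3}K$ and $Q_0=\max_{\mathbb R^3}Q$, we have $V(\varepsilon x)\ge V_0$, $K(\varepsilon x)\le K_0$ and $Q(\varepsilon x)\le Q_0$ for all $x\in\mathbb R^3$ and all $\varepsilon>0$. Combining this with $F(u)\ge0$ (from $(f_0)$) and $|u|^{2_s^{\ast}}\ge0$, and noting that the fractional–gradient term and the Coulomb term $\int\phi_u^tu^2\,{\rm d}x$ are identical in $I_\varepsilon$ and $I_0$, one gets the pointwise comparison $I_\varepsilon(u)\ge I_0(u)$ for every $u\in H^s(\mathbb R^3)=H_\varepsilon=E_{V_0}$. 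Hence $\max_{t\ge0}I_\varepsilon(tu)\ge\max_{t\ge0}I_0(tu)$ for every $u\ne0$, and taking the infimum over $u$ together with the variational characterizations $c_\varepsilon=\inf_{u\ne0}\max_{t\ge0}I_\varepsilon(tu)$ and $m_0=c_{V_0,K_0,Q_0}=\inf_{u\ne0}\max_{t\ge0}I_0(tu)$ (Lemma~\ref{lem3-1}$(ii)$ and its autonomous analogue) yields $c_\varepsilon\ge m_0$ for all $\varepsilon>0$.

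For the upper estimate, fix $\delta>0$. By Proposition~\ref{pro4-1} applied with $\nu=V_0$, $\kappa=K_0$, $\mu=Q_0$ there is a ground state $w\in\mathcal N_0$ with $I_0(w)=m_0$ and $\lim_{|x|\to\infty}w(x)=0$. Set $w_\theta(x)=\eta(x/\theta)w(x)$ with $\eta$ as in Lemma~\ref{lem4-3}; then $w_\theta\to w$ in $H^s(\mathbb R^3)$ as $\theta\to\infty$ (Lemma~5 in \cite{PP}). Since by $(f_1)$ the fiber map $t\mapsto I_0(tv)$ has a unique maximizer $t_v>0$ depending continuously on $v$, and $t_w=1$, it follows that $\max_{t\ge0}I_0(tw_\theta)\to I_0(w)=m_0$; pick $\theta_0$ with $\max_{t\ge0}I_0(tw_{\theta_0})<m_0+\delta$ and write $v:=w_{\theta_0}$, which has compact support, say $\operatorname{supp}v\subset B_{R_0}$. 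By continuity of $V,K,Q$ and $V(0)=V_0$, $K(0)=K_0$, $Q(0)=Q_0$, and since $\varepsilon x\in B_{R_0\varepsilon}$ on $\operatorname{supp}v$, for $\varepsilon$ small we have $V(\varepsilon x)\le V_0+\delta$, $K(\varepsilon x)\ge K_0-\delta$, $Q(\varepsilon x)\ge Q_0-\delta$ on $\operatorname{supp}v$, whence for every $t\ge0$
\[
I_\varepsilon(tv)-I_0(tv)=\tfrac12\!\int(V(\varepsilon x)-V_0)(tv)^2\,{\rm d}x-\!\int(K(\varepsilon x)-K_0)F(tv)\,{\rm d}x-\tfrac1{2_s^{\ast}}\!\int(Q(\varepsilon x)-Q_0)|tv|^{2_s^{\ast}}\,{\rm d}x\le C\delta\,(t^2+t^p+t^{2_s^{\ast}}),
\]
with $C$ depending only on $v$ (using \eqref{equ2-2} and $H^s\hookrightarrow L^2\cap L^p\cap L^{2_s^{\ast}}$). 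Bounding the maximizer $t_\varepsilon$ of $t\mapsto I_\varepsilon(tv)$ from above uniformly for small $\varepsilon$, exactly by the two–sided estimate for $t_\varepsilon$ carried out in Lemma~\ref{lem3-3}, we conclude $c_\varepsilon\le\max_{t\ge0}I_\varepsilon(tv)\le\max_{t\ge0}I_0(tv)+C'\delta\le m_0+(1+C')\delta$ for $\varepsilon$ small; letting first $\varepsilon\to0$ and then $\delta\to0$ gives $\limsup_{\varepsilon\to0}c_\varepsilon\le m_0$.

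Together the two estimates give $\lim_{\varepsilon\to0}c_\varepsilon=m_0$. The only genuinely delicate point is the upper estimate: once the compactly supported test function $v$ is fixed one must guarantee that the maximizing parameter $t_\varepsilon$ remains in a fixed compact subset of $(0,\infty)$ as $\varepsilon\to0$, so that the error term $O(\delta)$ above is uniform in $\varepsilon$; this is exactly the uniform bound $0<C_2\le t_\varepsilon\le C_1$ already established in the proof of Lemma~\ref{lem3-3}, applied here to $v$ instead of $u_\varepsilon$.
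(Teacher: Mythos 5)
Your proof is correct and follows the same overall strategy as the paper: the lower bound comes from the pointwise comparison $I_\varepsilon(u)\ge I_0(u)$ forced by $V\ge V_0$, $K\le K_0$, $Q\le Q_0$ together with $F\ge0$, and the upper bound is obtained by testing with the compactly supported truncation $\eta(\cdot/\theta)$ of a ground state of the limit problem and tracking the corresponding fiber maximizers. Your handling of the lower bound is in fact a small improvement over the paper's: you read off $c_\varepsilon=\inf_{u\ne0}\max_{t\ge0}I_\varepsilon(tu)\ge\inf_{u\ne0}\max_{t\ge0}I_0(tu)=m_0$ directly for every $\varepsilon>0$, whereas the paper argues by contradiction and invokes Proposition \ref{pro4-2} (existence of a minimizer for $I_\varepsilon$), so it only obtains the inequality for $\varepsilon$ small. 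For the upper bound you organize the double limit in $\delta$--$\varepsilon$ form (first fix a truncation level $\theta_0$ so the loss is $<\delta$, then shrink $\varepsilon$), while the paper first sends $\varepsilon\to0$ for fixed $\theta$ to get $t_{\varepsilon,\theta}\to t_\theta$ with $t_\theta u_\theta\in\mathcal{N}_0$ and then sends $\theta\to\infty$; the two bookkeepings are equivalent, and in both cases the crucial uniform bound on the maximizing parameter comes exactly as you indicate, from the Nehari relation for the fixed test function. One very minor slip: the polynomial bound on $I_\varepsilon(tv)-I_0(tv)$ coming from the $K$-term should include a $t^4$ power (since $F(u)\le\varepsilon u^4+C_\varepsilon|u|^p$ by \eqref{equ2-2}), not only $t^p$; this is cosmetic, since the argument only needs some fixed polynomial in $t$ together with the uniform bound on $t_\varepsilon$.
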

\begin{proof}
First, we show that there exists $\varepsilon_1>0$ such that $c_{\varepsilon}\geq m_0$ for all $\varepsilon\in(0,\varepsilon_1)$. We suppose by contradiction that for any given $\varepsilon_1>0$, there exists some $\varepsilon_0\in (0,\varepsilon_1)$ such that $c_{\varepsilon_0}<m_0$.  By Proposition \ref{pro4-2}, we can choose $u_0$ be a ground state solution of problem \eqref{equ2-4} such that $I_{\varepsilon_0}(u_0)=c_{\varepsilon_0}=\max_{t\geq0}I_{\varepsilon_0}(tu_0)<m_0$. Similar to the proof of $(ii)$ of Lemma \ref{lem3-1}, there exists $t_0>0$ such that $t_0u_0\in \mathcal{N}_0$ such that $I_0(t_0u_0)=\max\limits_{t\geq0}I_0(tu_0)$. Hence, by $V(\varepsilon_0 x)\geq V_0$, $K(\varepsilon_0 x)\leq K_0$ and $Q(\varepsilon_0 x)\leq Q_0$, we get that
\begin{equation*}
m_0>I_{\varepsilon_0}(u_0)=\max_{t\geq0}I_{\varepsilon_0}(tu_0)\geq\max_{t\geq0}I_0(tu_0)=I_0(t_0u_0)\geq m_0
\end{equation*}
which is a contradiction.

Next, we will prove that $\limsup\limits_{\varepsilon\rightarrow0}c_{\varepsilon}\leq m_0$. Let $u_0$ be a nontrivial ground state solution of equation \eqref{equ4-1} with $\nu=V_0$, $\kappa=K_0$ and $\mu=Q_0$, that is $I_0(u_0)=m_0$. For each $\theta>0$, set $u_{\theta}(x)=\eta(x/\theta)u_0(x)$, where $\eta\in C_0^{\infty}(\mathbb{R}^3,[0,1])$ be such that $\eta(x)=1$ if $|x|\leq 1$, and $\eta(x)=0$ if $|x|\geq 2$. From the definition of $\eta(x)$, using Lemma 5 in \cite{PP}, $u_{\theta}\rightarrow u_0$ in $H^s(\mathbb{R}^3)$ as $\theta\rightarrow\infty$.

For each $\varepsilon>0$, $\theta>0$, there exists $t_{\varepsilon,\theta}>0$ such that $I_{\varepsilon}(t_{\varepsilon,\theta}u_{\theta})=\max_{t\geq0}I_{\varepsilon}(tu_{\theta})$. Thus, $\langle I_{\varepsilon}'(t_{\varepsilon,\theta}u_{\theta}),t_{\varepsilon,\theta}u_{\theta}\rangle=0$, that is,
\begin{align}\label{equ4-6}
t_{\varepsilon,\theta}^2\int_{\mathbb{R}^3}&\Big(|(-\Delta)^{\frac{s}{2}}u_{\theta}|^2+V(\varepsilon x)|u_{\theta}|^2\Big)\,{\rm d}x+t_{\varepsilon,\theta}^4\int_{\mathbb{R}^3}\phi_{u_{\theta}}^tu_{\theta}^2\,{\rm d}x\nonumber\\
&=\int_{\mathbb{R}^3}K(\varepsilon x)f(t_{\varepsilon,\theta}u_{\theta})t_{\varepsilon,\theta}u_{\theta}\,{\rm d}x+t_{\varepsilon,\theta}^{2_s^{\ast}}\int_{\mathbb{R}^3}Q(\varepsilon x)|u_{\theta}|^{2_s^{\ast}}\,{\rm d}x.
\end{align}
Similar to Lemma \ref{lem3-4} or Lemma \ref{lem4-2}, we can deduce that for each $\theta>0$, $0<\lim\limits_{\varepsilon\rightarrow0}t_{\varepsilon,\theta}=t_{\theta}<\infty$.
Taking the limit as $\varepsilon\rightarrow0$ in \eqref{equ4-6}, we get
\begin{align*}
t_{\theta}^2\int_{\mathbb{R}^3}&\Big(|(-\Delta)^{\frac{s}{2}}u_{\theta}|^2+V_0|u_{\theta}|^2\Big)\,{\rm d}x+t_{\theta}^4\int_{\mathbb{R}^3}\phi_{u_{\theta}}^tu_{\theta}^2\,{\rm d}x\nonumber\\
&=\int_{\mathbb{R}^3}K_0f(t_{\theta}u_{\theta})t_{\theta}u_{\theta}\,{\rm d}x+t_{\theta}^{2_s^{\ast}}\int_{\mathbb{R}^3}Q_0|u_{\theta}|^{2_s^{\ast}}\,{\rm d}x.
\end{align*}
This yields to $t_{\theta}u_{\theta}\in\mathcal{N}_0$, i.e., $I_0(t_{\theta}u_{\theta})=\max\limits_{t\geq0}I_0(tu_{\theta})$. Recall that $u_{\theta}\rightarrow u_0$ in $H^s(\mathbb{R}^3)$ as $\theta\rightarrow \infty$ and $u_0\in\mathcal{N}_0$, thus, using $(ii)$ of Lemma \ref{lem3-1}, we have $t_{\theta}\rightarrow1$ as $\theta\rightarrow\infty$. By the definition of $c_{\varepsilon}$, we have that
\begin{equation*}
\limsup_{\varepsilon\rightarrow0}c_{\varepsilon}\leq\limsup_{\varepsilon\rightarrow0}\max_{t\geq0}I_{\varepsilon}(tu_{\theta})=\limsup_{\varepsilon\rightarrow0}I_{\varepsilon}(t_{\varepsilon,\theta}u_{\theta})
=I_0(t_{\theta}u_{\theta}).
\end{equation*}
Let $\theta\rightarrow\infty$, we get that $\limsup\limits_{\varepsilon\rightarrow0}c_{\varepsilon}\leq I_0(u_0)=m_0$. The proof is completed.
\end{proof}

\section{Concentration behavior}
In this section, we study the concentration behavior of ground state solutions of system \eqref{main}. In this section, we choose $H^s(\mathbb{R}^3)$ as our work space since $H^s(\mathbb{R}^3)=E_{\nu}=H_{\varepsilon}$ for any $\varepsilon>0$, $\nu>0$.

Proposition \ref{pro4-2} tells us that there exists $\widetilde{\varepsilon}_0>0$, such that for each $\varepsilon\in(0,\widetilde{\varepsilon}_0)$, problem \eqref{equ2-4} possesses a positive ground state solution $v_{\varepsilon}\in H^s(\mathbb{R}^3)$ satisfying $I_{\varepsilon}(v_{\varepsilon})=c_{\varepsilon}$ and $I_{\varepsilon}'(v_{\varepsilon})=0$. Now, we study the behavior of the family $\{v_{\varepsilon}\}$.
\begin{lemma}\label{lem5-1}
For the family $\{v_{\varepsilon}\}$ satisfying $I_{\varepsilon}(v_{\varepsilon})=c_{\varepsilon}$ and $I_{\varepsilon}'(v_{\varepsilon})=0$, there exist $\widehat{\varepsilon}>0$, such that for all $\varepsilon\in(0,\widehat{\varepsilon})$, there exist a family $\{y_{\varepsilon}\}\subset\mathbb{R}^3$, and constants $R,\sigma>0$ such that
\begin{equation}\label{equ5-1}
\int_{B_R(y_{\varepsilon})}|v_{\varepsilon}|^2\,{\rm d}x\geq\sigma.
\end{equation}
\end{lemma}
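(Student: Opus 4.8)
The plan is to argue by contradiction via a vanishing-type argument. Suppose the conclusion fails, i.e., for every choice of $R>0$ and $\sigma>0$ (and along a sequence $\varepsilon_n\to 0$) we have
\[
\lim_{n\to\infty}\sup_{y\in\mathbb{R}^3}\int_{B_R(y)}|v_{\varepsilon_n}|^2\,{\rm d}x=0.
\]
First I would record that $\{v_{\varepsilon_n}\}$ is bounded in $H^s(\mathbb{R}^3)$ uniformly in $n$: since $v_{\varepsilon_n}\in\mathcal{N}_{\varepsilon_n}$ and $I_{\varepsilon_n}(v_{\varepsilon_n})=c_{\varepsilon_n}$, the expression for $I_{\varepsilon_n}$ on $\mathcal{N}_{\varepsilon_n}$ (the sum of a multiple of $\|v_{\varepsilon_n}\|_{\varepsilon_n}^2$, a nonnegative term from $(f_1)$--$(f_2)$, and the nonnegative critical term) together with the bound $c_{\varepsilon_n}\le c_{\varepsilon_n}<\frac{s}{3Q(x_0)^{(3-2s)/2s}}\mathcal{S}_s^{3/2s}$ from Lemma \ref{lem3-3}, and the fact that $c_{\varepsilon_n}\to m_0$ by Lemma \ref{lem4-4}, forces $\|v_{\varepsilon_n}\|$ bounded; the lower bound $\|v_{\varepsilon_n}\|_{\varepsilon_n}\ge C>0$ from $(iii)$ of Lemma \ref{lem3-1} will also be used.

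Next, from the assumed vanishing and Lemma \ref{lem2-2} I conclude $v_{\varepsilon_n}\to 0$ strongly in $L^r(\mathbb{R}^3)$ for every $2<r<2_s^\ast$. Consequently, by \eqref{equ2-2} and $(iii)$ of Lemma \ref{lem2-1},
\[
\int_{\mathbb{R}^3}K(\varepsilon_n x)F(v_{\varepsilon_n})\,{\rm d}x\to 0,\qquad \int_{\mathbb{R}^3}K(\varepsilon_n x)f(v_{\varepsilon_n})v_{\varepsilon_n}\,{\rm d}x\to 0,\qquad \int_{\mathbb{R}^3}\phi_{v_{\varepsilon_n}}^t v_{\varepsilon_n}^2\,{\rm d}x\to 0.
\]
Feeding this into $I_{\varepsilon_n}(v_{\varepsilon_n})=c_{\varepsilon_n}$ and $\langle I_{\varepsilon_n}'(v_{\varepsilon_n}),v_{\varepsilon_n}\rangle=0$ yields, exactly as in the vanishing case of Lemma \ref{lem3-4} (see \eqref{equ3-13}--\eqref{equ3-14}),
\[
c_{\varepsilon_n}=\tfrac12\|v_{\varepsilon_n}\|_{\varepsilon_n}^2-\tfrac{1}{2_s^\ast}\int_{\mathbb{R}^3}Q(\varepsilon_n x)|v_{\varepsilon_n}|^{2_s^\ast}\,{\rm d}x+o_n(1),\qquad \|v_{\varepsilon_n}\|_{\varepsilon_n}^2=\int_{\mathbb{R}^3}Q(\varepsilon_n x)|v_{\varepsilon_n}|^{2_s^\ast}\,{\rm d}x+o_n(1).
\]
Writing $\|v_{\varepsilon_n}\|_{\varepsilon_n}^2\to\mathcal{L}$ and $\int Q(\varepsilon_n x)|v_{\varepsilon_n}|^{2_s^\ast}\to\mathcal{L}$, the lower Nehari bound forces $\mathcal{L}>0$; then the Sobolev inequality \eqref{equ2-1} with $Q(\varepsilon_n x)\le Q(x_0)$ gives $\mathcal{L}\ge Q(x_0)^{-(3-2s)/2s}\mathcal{S}_s^{3/2s}$, hence $c_{\varepsilon_n}=\frac{s}{3}\mathcal{L}\ge\frac{s}{3Q(x_0)^{(3-2s)/2s}}\mathcal{S}_s^{3/2s}$ in the limit. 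But by Lemma \ref{lem3-3} (uniformly for small $\varepsilon$) we have $c_{\varepsilon_n}<\frac{s}{3Q(x_0)^{(3-2s)/2s}}\mathcal{S}_s^{3/2s}$, a contradiction. Therefore the vanishing alternative cannot occur, which is precisely the assertion \eqref{equ5-1}: there are $R,\sigma>0$ and $y_\varepsilon\in\mathbb{R}^3$ with $\int_{B_R(y_\varepsilon)}|v_\varepsilon|^2\,{\rm d}x\ge\sigma$ for all small $\varepsilon$.

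The main obstacle I anticipate is making the $o_n(1)$ bookkeeping fully rigorous and uniform: in particular, ensuring that the convergences of the $F$, $f(u)u$, and $\phi_u^t u^2$ integrals to zero really follow from $L^r$-vanishing for $2<r<2_s^\ast$ (the subcritical terms are controlled by \eqref{equ2-2}, and the nonlocal term by $(iii)$ of Lemma \ref{lem2-1} together with the embedding $H^s\hookrightarrow L^{12/(3+2t)}$ valid since $4s+2t>3$), and that the Brezis--Nirenberg-type comparison via \eqref{equ2-1} is applied with the correct constant $Q(x_0)=\max Q$. Once these are in place, the contradiction with the strict upper bound of Lemma \ref{lem3-3} closes the argument.
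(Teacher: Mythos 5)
Your proposal is correct and is exactly the argument the paper intends: the paper's proof of Lemma~\ref{lem5-1} simply says to repeat the vanishing case of Lemma~\ref{lem3-4}, and you have spelled out that argument (Lions' vanishing via Lemma~\ref{lem2-2}, disappearance of the subcritical and nonlocal terms, the two limiting identities, the Sobolev lower bound for $\mathcal{L}$, and the comparison with the strict bound of Lemma~\ref{lem3-3}). The only point worth pinning down is the last step: since $\varepsilon_n\to0$ here (unlike the fixed $\varepsilon$ of Lemma~\ref{lem3-4}), the contradiction should be closed by passing to the limit and invoking $c_{\varepsilon_n}\to m_0<\frac{s}{3Q(x_0)^{(3-2s)/2s}}\mathcal{S}_s^{3/2s}$ (Lemmas~\ref{lem4-4} and~\ref{lem4-1}), which is exactly the uniformity issue you flagged.
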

\begin{proof}
Suppose by contradiction that \eqref{equ5-1} does not happen. Then there exists a sequence $\varepsilon_n\rightarrow0$ as $n\rightarrow\infty$ such that
\begin{equation*}
\lim_{n\rightarrow\infty}\sup_{y\in\mathbb{R}^3}\int_{B_R(y)}|v_{\varepsilon_n}|^2\,{\rm d}x=0.
\end{equation*}
Taking a similar discussion as that in the proof of Lemma \ref{lem3-4}, we can easily obtain a contradiction. Hence, \eqref{equ5-1} holds.
\end{proof}

For simplicity, we denote
\begin{equation*}
w_{\varepsilon}(x):=v_{\varepsilon}(x+y_{\varepsilon})(=u_{\varepsilon}(\varepsilon x+\varepsilon y_{\varepsilon})).
\end{equation*}
By the fact that $I(v_{\varepsilon})=c_{\varepsilon}$ and $I_{\varepsilon}'(v_{\varepsilon})=0$, so $w_{\varepsilon}$ is a positive ground state solution to the following equation
\begin{equation}\label{equ5-1-0}
(-\Delta)^sw+V(\varepsilon x+\varepsilon y_{\varepsilon})w+\phi_{w}^tw=K(\varepsilon x+\varepsilon y_{\varepsilon})f(w)+Q(\varepsilon x+\varepsilon y_{\varepsilon})|w|^{2_s^{\ast}-2}w.
\end{equation}

\begin{lemma}\label{lem5-2}
The family $\{\varepsilon y_{\varepsilon}\}$ which obtained in Lemma \ref{lem5-1} is bounded in $\mathbb{R}^3$ for any $\varepsilon\in(0,\widehat{\varepsilon})$.
\end{lemma}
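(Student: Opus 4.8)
The plan is to argue by contradiction: if $\{\varepsilon y_{\varepsilon}\}$ were unbounded we could extract a sequence $\varepsilon_n\to 0^{+}$ with $|\varepsilon_n y_{\varepsilon_n}|\to+\infty$, and the goal is to show that this forces $m_{\infty}\leq m_0$, contradicting the strict inequality $m_0<m_{\infty}$ proved inside Lemma~\ref{lem4-3}. Throughout, write $w_n:=w_{\varepsilon_n}$ and put $V_n(x):=V(\varepsilon_n x+\varepsilon_n y_{\varepsilon_n})$, $K_n(x):=K(\varepsilon_n x+\varepsilon_n y_{\varepsilon_n})$, $Q_n(x):=Q(\varepsilon_n x+\varepsilon_n y_{\varepsilon_n})$, so that $w_n$ solves \eqref{equ5-1-0} with these coefficients. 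First I would set up the limit object: since $v_{\varepsilon}\in\mathcal{N}_{\varepsilon}$ and, on $\mathcal{N}_{\varepsilon}$, $I_{\varepsilon}$ is a sum of nonnegative terms one of which is $\frac14\|v_{\varepsilon}\|_{\varepsilon}^2$, we get $\|v_{\varepsilon}\|_{\varepsilon}^2\leq 4c_{\varepsilon}$; as $c_{\varepsilon}\to m_0$ by Lemma~\ref{lem4-4}, the family $\{w_n\}$ is bounded in $H^s(\mathbb{R}^3)$. Passing to a subsequence, $w_n\rightharpoonup w$ in $H^s(\mathbb{R}^3)$, $w_n\to w$ in $L_{loc}^r(\mathbb{R}^3)$ for $2\leq r<2_s^{\ast}$ and a.e.\ in $\mathbb{R}^3$; moreover $w\neq 0$, since Lemma~\ref{lem5-1} gives $\int_{B_R(0)}|w_n|^2\,{\rm d}x=\int_{B_R(y_{\varepsilon_n})}|v_{\varepsilon_n}|^2\,{\rm d}x\geq\sigma$, which persists in the local strong limit, and $w\geq 0$ because each $v_{\varepsilon}>0$.

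The core step is to show that $w$ is a subsolution of the problem at infinity, i.e.\ $\langle I_{\infty}'(w),\varphi\rangle\leq 0$ for every $\varphi\in C_0^{\infty}(\mathbb{R}^3)$ with $\varphi\geq 0$. Fix such a $\varphi$. On ${\rm supp}\,\varphi$ one has $|\varepsilon_n x+\varepsilon_n y_{\varepsilon_n}|\to+\infty$, hence $K_n\to K_{\infty}$ and $Q_n\to Q_{\infty}$ uniformly there, while by $(V)$, for each $\delta>0$, $V_n\geq V_{\infty}-\delta$ on ${\rm supp}\,\varphi$ for $n$ large. Testing \eqref{equ5-1-0} against $\varphi$ and letting $n\to\infty$: the Gagliardo term converges by weak convergence; $\int K_nf(w_n)\varphi\,{\rm d}x\to K_{\infty}\int f(w)\varphi\,{\rm d}x$ and $\int Q_n|w_n|^{2_s^{\ast}-2}w_n\varphi\,{\rm d}x\to Q_{\infty}\int|w|^{2_s^{\ast}-2}w\varphi\,{\rm d}x$ from the local strong convergence, the growth of $f$, and the weak $L_{loc}^{2_s^{\ast}/(2_s^{\ast}-1)}$ convergence of $|w_n|^{2_s^{\ast}-2}w_n$; the nonlocal term $\int\phi_{w_n}^tw_n\varphi\,{\rm d}x\to\int\phi_w^tw\varphi\,{\rm d}x$ exactly as in the proof of Lemma~\ref{lem3-1}(iv), via the pointwise convergence $\phi_{w_n}^t\to\phi_w^t$ and the local compactness of the embedding of $\mathcal{D}^{t,2}(\mathbb{R}^3)$; and $\liminf_n\int V_nw_n\varphi\,{\rm d}x\geq V_{\infty}\int w\varphi\,{\rm d}x$ from the lower bound on $V_n$ together with $w_n\geq 0$, $\varphi\geq 0$. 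Collecting these yields $\langle I_{\infty}'(w),\varphi\rangle\leq 0$; approximating $w$ in $H^s(\mathbb{R}^3)$ by nonnegative functions of $C_0^{\infty}(\mathbb{R}^3)$ then gives $\langle I_{\infty}'(w),w\rangle\leq 0$, so, as in the analogue of Lemma~\ref{lem3-1}(ii) for $I_{\infty}$ (and the argument of Case~1 in the proof of Lemma~\ref{lem3-4}), there is $t_w\in(0,1]$ with $t_ww\in\mathcal{N}_{\infty}$.

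Finally I would close the argument with a Fatou estimate. Since $w_n$ lies on the Nehari manifold of the shifted functional, $c_{\varepsilon_n}=I_{\varepsilon_n}(v_{\varepsilon_n})$ equals $\frac14\int(|(-\Delta)^{\frac s2}w_n|^2+V_n|w_n|^2)\,{\rm d}x+\frac14\int K_n(f(w_n)w_n-4F(w_n))\,{\rm d}x+\frac{4s-3}{12}\int Q_n|w_n|^{2_s^{\ast}}\,{\rm d}x$, and by \eqref{equ2-3} all four integrands are nonnegative. For a.e.\ $x$ one has $\liminf_nV_n(x)\geq V_{\infty}$, $K_n(x)\to K_{\infty}$, $Q_n(x)\to Q_{\infty}$ and $w_n(x)\to w(x)$, so weak lower semicontinuity of the Gagliardo seminorm together with Fatou's lemma on the remaining three terms give $m_0=\lim_nc_{\varepsilon_n}\geq\mathcal{R}(w)$, where $\mathcal{R}(u):=\frac14\int(|(-\Delta)^{\frac s2}u|^2+V_{\infty}u^2)\,{\rm d}x+\frac14\int K_{\infty}(f(u)u-4F(u))\,{\rm d}x+\frac{4s-3}{12}\int Q_{\infty}|u|^{2_s^{\ast}}\,{\rm d}x$. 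Since $t\mapsto f(tw)tw-4F(tw)$ is nondecreasing by \eqref{equ2-3} and $t_w\leq 1$, we have $\mathcal{R}(t_ww)\leq\mathcal{R}(w)$, while $t_ww\in\mathcal{N}_{\infty}$ gives $\mathcal{R}(t_ww)=I_{\infty}(t_ww)\geq m_{\infty}$. Hence $m_{\infty}\leq m_0$, contradicting $m_0<m_{\infty}$, and the lemma follows.

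The main obstacle is the core step: establishing the subsolution inequality for $I_{\infty}$ in the limit. Two features require care here: the hypothesis $(V)$ controls only $\liminf_{|x|\to\infty}V$, not a genuine limit, which forces a one-sided $\liminf$/sign argument (using $\varphi\geq 0$, $w\geq 0$) rather than simple passage to the limit; and the Coulomb term $\phi_{w_n}^tw_n$ depends globally on $w_n$, so its convergence against the compactly supported $\varphi$ must be extracted from pointwise convergence of the potentials $\phi^t$ plus local compactness rather than from global strong convergence of $\{w_n\}$, which is not available in this setting. Once the subsolution inequality is in hand, the projection onto $\mathcal{N}_{\infty}$ and the Fatou/monotonicity comparison are routine.
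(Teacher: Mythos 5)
Your argument is correct and takes a genuinely different route from the paper's. The paper's proof projects $w_n$ onto $\mathcal{N}_0$ via factors $t_n$, shows (by sandwiching between $m_0$ and $c_{\varepsilon_n}\to m_0$) that $\{t_nw_n\}$ is a minimizing, hence $(PS)_{m_0}$, sequence for $I_0$, invokes the compactness analysis of Lemma~\ref{lem4-1} and Proposition~\ref{pro4-1} to upgrade $w_n\rightharpoonup w$ to strong $H^s$-convergence and deduce $tw\in\mathcal{N}_0$, and only then runs a Fatou chain comparing $I_0(tw)$ with $I_\infty(tw)$, the strictness coming from $V_0<V_\infty$ and $\int|tw|^2>0$. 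You instead never pass to strong convergence: the novel step is the subsolution inequality $\langle I_\infty'(w),\varphi\rangle\leq 0$ for $\varphi\geq 0$ (and hence $\langle I_\infty'(w),w\rangle\leq 0$), which is obtained by testing \eqref{equ5-1-0} against compactly supported nonnegative $\varphi$, using the local pointwise convergence $\phi_{w_n}^t\to\phi_w^t$ (as in the first half of the proof of Lemma~\ref{lem3-1}(iv), which only needs local strong $L^r$-convergence plus boundedness) and the one-sided $\liminf$ bound on the $V$-term. From there you project $w$ onto $\mathcal{N}_\infty$ with $t_w\leq 1$ exactly as in Case~1 of Lemma~\ref{lem3-4}, and close with Fatou and the monotonicity of $t\mapsto t^2,\ t^{2_s^*},\ f(tw)tw-4F(tw)$ to get $m_\infty\leq\mathcal{R}(t_ww)\leq\mathcal{R}(w)\leq m_0$, contradicting the strict inequality $m_0<m_\infty$ proved inside Lemma~\ref{lem4-3}. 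What your version buys: it bypasses entirely the detour through $(PS)_{m_0}$-compactness (and the reliance on the threshold estimate $m_0<\frac{s}{3Q_0^{(3-2s)/(2s)}}\mathcal{S}_s^{3/(2s)}$ at this point), trading it for a careful passage to the limit in the weak formulation. What it costs: you must handle the limit of the nonlocal Coulomb term paired with compactly supported test functions under only weak global convergence, which you correctly identify as the delicate point; the density step approximating $w\geq 0$ by nonnegative $C_0^\infty$ functions in $H^s$ is standard but should be stated. Both proofs work; yours is shorter in the compactness direction, while the paper's produces the strong convergence $w_n\to w$ as a byproduct, which it then reuses in Lemmas~\ref{lem5-4} and~\ref{lem5-5}.
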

\begin{proof}
Suppose by contradiction that $\{\varepsilon y_{\varepsilon}\}$ is unbounded, then there exist two sequences $\varepsilon_n$ and $\{\varepsilon_ny_{\varepsilon_n}\}$ such that $\lim\limits_{n\rightarrow\infty}\varepsilon_n\rightarrow0$ and  $\lim\limits_{n\rightarrow\infty}|\varepsilon_ny_{\varepsilon_n}|=+\infty$. In the sequel, for simplicity, we denote $y_n=y_{\varepsilon_n}$ and $v_n=v_{\varepsilon_n}$. Set $w_n(\cdot)=v_n(\cdot+y_n)$, then for each $n\in\mathbb{N}$, $w_n\geq0$ satisfies that $I_{\varepsilon_n}(w_n)=c_{\varepsilon_n}$ and $I_{\varepsilon_n}'(w_n)=0$,
and from \eqref{equ5-1}, we have that
\begin{equation}\label{equ5-2}
\int_{B_R(0)}|w_n|^2\,{\rm d}x\geq\sigma>0 \quad \text{for all}\,\, n\in\mathbb{N}.
\end{equation}
By Lemma \ref{lem4-4}, it is easy to check that $\{w_n\}$ is bounded in $H^s(\mathbb{R}^3)$. Thus, up to a subsequence, still denoted by $\{w_n\}$, we assume that there exists $w\in H^s(\mathbb{R}^3)$ such that $w_n\rightharpoonup w$ in $H^s(\mathbb{R}^3)$, $w_n\rightarrow w$ in $L_{loc}^r(\mathbb{R}^3)$ for $2\leq r<2_s^{\ast}$ and $w_n\rightarrow w$ a.e. in $\mathbb{R}^3$. Obviously $w\geq0$. Moreover, from \eqref{equ5-2}, we see that $w\not\equiv0$.

For each $n\in\mathbb{N}$, there exists $t_n>0$ such that $t_nw_n\in\mathcal{N}_0$. We claim that $\lim\limits_{n\rightarrow\infty}I_0(t_nw_n)=m_0$. Since $t_nw_n\in\mathcal{N}_0$, then clearly $I_0(t_nw_n)\geq m_0$. On the other hand, by Lemma \ref{lem4-3}, we have that
\begin{align}\label{equ5-3}
I_0(t_nw_n)&\leq\frac{t_n^2}{2}\int_{\mathbb{R}^3}\Big(|(-\Delta)^{\frac{s}{2}}w_n|^2+V(\varepsilon_n x+\varepsilon_n y_n)|w_n|^2\Big)\,{\rm d}x+\frac{t_n^4}{4}\int_{\mathbb{R}^3}\phi_{w_n}^tw_n^2\,{\rm d}x\nonumber\\
&-\int_{\mathbb{R}^3}K(\varepsilon_n x+\varepsilon_n y_n)F(t_nw_n)\,{\rm d}x-\int_{\mathbb{R}^3}Q(\varepsilon_n x+\varepsilon_n y_n)|w_n|^{2_s^{\ast}}\,{\rm d}x\nonumber\\
&=I_{\varepsilon_n}(t_nw_n)\leq\max_{t\geq0}I_{\varepsilon_n}(tw_n)=I_{\varepsilon_n}(w_n)=c_{\varepsilon_n}=m_0+o_n(1)
\end{align}
which yields to $\limsup\limits_{n\rightarrow\infty}I_0(t_nw_n)\leq m_0$ and hence the claim is true.

Since $\{w_n\}$ is bounded in $H^s(\mathbb{R}^3)$, by \eqref{equ5-3}, it is easy to get that $\{t_n\}$ is bounded. Thus, up to a subsequence, still denoted by $\{t_n\}$, we may assume that $\lim\limits_{n\rightarrow\infty}t_n=t\geq0$. If $t=0$, in view of the boundedness of $\{w_n\}$ in $H^s(\mathbb{R}^3)$, we see that $t_nw_n\rightarrow0$ in $H^s(\mathbb{R}^3)$, and thus $\lim\limits_{n\rightarrow\infty}I_0(t_nw_n)=0$, a contradiction. Hence, $t>0$.

Observing that $\{t_nw_n\}$ is bounded in $H^s(\mathbb{R}^3)$, up to a subsequence, still denoted by $\{t_nw_n\}$, we may assume that $t_nw_n\rightharpoonup \widehat{w}$ in $H^s(\mathbb{R}^3)$. Since $w_n\rightharpoonup w$ in $H^s(\mathbb{R}^3)$ and $t_n\rightarrow t$ as $n\rightarrow\infty$, then $t_nw_n\rightharpoonup tw$ in $H^s(\mathbb{R}^3)$ as $n\rightarrow\infty$. By the uniqueness of weak limit, it yields to $\widehat{w}=tw$. Therefore, we obtain a bounded minimizing sequence $\{t_nw_n\}\subset\mathcal{N}_0$ as $n\rightarrow\infty$ and $\langle I_0'(t_nw_n),t_nw_n\rangle=0$ for any $n\in\mathbb{N}$. Similar proof as that done in the proof of $(vi)$ of Lemma \ref{lem3-1}, we may assume that $\{t_nw_n\}$ is a $(PS)_{m_0}$ sequence for $I_0$. By Lemma \ref{lem4-1}, using similar argument as the proof of Proposition \ref{pro4-1}, we can conclude that $t_nw_n\rightarrow tw$ in $H^s(\mathbb{R}^3)$. Moreover, $tw\in\mathcal{N}_0$. Thus,
\begin{equation*}
0\leq t\|w_n-w\|\leq|t_n-t|\|w_n\|+\|t_nw_n-tw\|\rightarrow0\quad \text{as}\,\, n\rightarrow\infty,
\end{equation*}
that is, $w_n\rightarrow w$ in $H^s(\mathbb{R}^3)$ as $n\rightarrow\infty$. Therefore, by Fatou's Lemma and $t_nw_n\in\mathcal{N}_0$, recalling that $\varepsilon_n\rightarrow0$ and $|\varepsilon_ny_n|\rightarrow\infty$ as $n\rightarrow\infty$, we have that
\begin{align}\label{equ5-4}
m_0&\leq I_0(tw)<I_{\infty}(tw)=I_{\infty}(tw)-\frac{1}{4}\langle I_0'(tw),tw\rangle\nonumber\\
&=\frac{1}{4}\int_{\mathbb{R}^3}\Big(|(-\Delta)^{\frac{s}{2}}tw|^2\,{\rm d}x+\int_{\mathbb{R}^3}(\frac{V_{\infty}}{2}-\frac{V_0}{4})|tw|^2\Big)\,{\rm d}x+\int_{\mathbb{R}^3}(\frac{Q_0}{4}-\frac{Q_{\infty}}{2_s^{\ast}})|tw|^{2_s^{\ast}}\,{\rm d}x\nonumber\\
&+\int_{\mathbb{R}^3}(\frac{K_0}{4}f(tw)tw-K_{\infty}F(tw))\,{\rm d}x\nonumber\\
&\leq\liminf_{n\rightarrow\infty}\frac{1}{4}\int_{\mathbb{R}^3}\Big(|(-\Delta)^{\frac{s}{2}}t_nw_n|^2\,{\rm d}x+\liminf_{n\rightarrow\infty}\int_{\mathbb{R}^3}(\frac{V(\varepsilon_n x+\varepsilon_ny_n)}{2}-\frac{V_0}{4})|t_nw_n|^2\Big)\,{\rm d}x\nonumber\\
&+\liminf_{n\rightarrow\infty}\int_{\mathbb{R}^3}(\frac{K_0}{4}f(t_nw_n)t_nw_n-K(\varepsilon_n x+\varepsilon_ny_n)F(t_nw_n))\,{\rm d}x\nonumber\\
&+\liminf_{n\rightarrow\infty}\int_{\mathbb{R}^3}(\frac{Q_0}{4}-\frac{Q(\varepsilon_n x+\varepsilon_ny_n)}{2_s^{\ast}})|t_nw_n|^{2_s^{\ast}}\,{\rm d}x\nonumber\\
&\leq\liminf_{n\rightarrow\infty}\Big(I_{\varepsilon_n}(t_nw_n)-\frac{1}{4}\langle I_0'(t_nw_n),t_nw_n\rangle\Big)=\liminf_{n\rightarrow\infty}I_{\varepsilon_n}(t_nw_n)\nonumber\\
&\leq\liminf_{n\rightarrow\infty}\max_{t\geq0}I_{\varepsilon_n}(tw_n)=\liminf_{n\rightarrow\infty}I_{\varepsilon_n}(w_n)=\liminf_{n\rightarrow\infty}c_{\varepsilon_n}=m_0
\end{align}
which yields a contradiction. Thus, $\{\varepsilon y_{\varepsilon}\}$ is bounded in $\mathbb{R}^3$.
\end{proof}
For any $\varepsilon_n\rightarrow0$, the subsequence
$\{\varepsilon_ny_{\varepsilon_n}\}$ of the family $\{\varepsilon y_{\varepsilon}\}$ is such that $\varepsilon_ny_{\varepsilon_n}\rightarrow x^{\ast}$ in $\mathbb{R}^3$, we will prove that $x^{\ast}\in \Theta$.
\begin{lemma}\label{lem5-3}
$x^{\ast}\in\Theta$.
\end{lemma}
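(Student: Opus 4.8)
The plan is to pass to the limit in the translated equation \eqref{equ5-1-0}, to identify the weak limit of $w_{\varepsilon_n}$ as a nontrivial solution of an autonomous problem with the frozen coefficients $V(x^{\ast}),K(x^{\ast}),Q(x^{\ast})$, and then to squeeze the corresponding ground state level between $m_0$ and itself. Write $V_n(x)=V(\varepsilon_n x+\varepsilon_n y_{\varepsilon_n})$ and, analogously, $K_n$, $Q_n$; since $\varepsilon_n\rightarrow0$ and $\varepsilon_n y_{\varepsilon_n}\rightarrow x^{\ast}$, for every fixed $x$ we have $\varepsilon_n x+\varepsilon_n y_{\varepsilon_n}\rightarrow x^{\ast}$, so by continuity $V_n\rightarrow V(x^{\ast})$, $K_n\rightarrow K(x^{\ast})$, $Q_n\rightarrow Q(x^{\ast})$ pointwise and locally uniformly. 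Let $\widetilde I_n$ denote the energy functional associated with \eqref{equ5-1-0}; by the change of variables $x\mapsto x-y_{\varepsilon_n}$ one has $\widetilde I_n(w_{\varepsilon_n})=I_{\varepsilon_n}(v_{\varepsilon_n})=c_{\varepsilon_n}$, and $\widetilde I_n'(w_{\varepsilon_n})=0$.

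First I would record that, by Lemma \ref{lem4-4}, $c_{\varepsilon_n}\rightarrow m_0$, and that the Nehari identity together with $V_n\geq V_0$ and the nonnegativity granted by \eqref{equ2-3} and $s>\frac34$ forces $\{w_{\varepsilon_n}\}$ to be bounded in $H^s(\mathbb{R}^3)$. Passing to a subsequence, $w_{\varepsilon_n}\rightharpoonup w$ in $H^s(\mathbb{R}^3)$, $w_{\varepsilon_n}\rightarrow w$ in $L^r_{loc}(\mathbb{R}^3)$ for $2\leq r<2_s^{\ast}$ and a.e. in $\mathbb{R}^3$; the translated form of \eqref{equ5-1}, namely $\int_{B_R(0)}|w_{\varepsilon_n}|^2\,{\rm d}x\geq\sigma$, together with the local strong convergence gives $\int_{B_R(0)}|w|^2\,{\rm d}x\geq\sigma>0$, so $w\not\equiv0$. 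Testing \eqref{equ5-1-0} against $\varphi\in C_0^{\infty}(\mathbb{R}^3)$ and letting $n\rightarrow\infty$ --- using the local compactness, the locally uniform convergence of $V_n,K_n,Q_n$, and arguing as for the Poisson term in Lemma \ref{lem3-1}$(iv)$ (or Proposition \ref{pro4-1}) --- I conclude that $w$ is a nontrivial weak solution of \eqref{equ4-1} with $\nu=V(x^{\ast})$, $\kappa=K(x^{\ast})$, $\mu=Q(x^{\ast})$. Hence $w\in\mathcal{N}_{V(x^{\ast}),K(x^{\ast}),Q(x^{\ast})}$ and so $I_{V(x^{\ast}),K(x^{\ast}),Q(x^{\ast})}(w)\geq m_{V(x^{\ast}),K(x^{\ast}),Q(x^{\ast})}$.

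Since $w$ is a critical point, $I_{V(x^{\ast}),K(x^{\ast}),Q(x^{\ast})}(w)=I_{V(x^{\ast}),K(x^{\ast}),Q(x^{\ast})}(w)-\frac14\langle I'_{V(x^{\ast}),K(x^{\ast}),Q(x^{\ast})}(w),w\rangle$ equals
$$\frac14\int_{\mathbb{R}^3}\big(|(-\Delta)^{\frac{s}{2}}w|^2+V(x^{\ast})w^2\big)\,{\rm d}x+\int_{\mathbb{R}^3}K(x^{\ast})\Big(\frac14 f(w)w-F(w)\Big)\,{\rm d}x+\frac{4s-3}{12}\int_{\mathbb{R}^3}Q(x^{\ast})|w|^{2_s^{\ast}}\,{\rm d}x,$$
and every integrand here is nonnegative. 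I would then estimate this quantity from above by $\liminf_n$ of the same expression with $w$, $V(x^{\ast})$, $K(x^{\ast})$, $Q(x^{\ast})$ replaced by $w_{\varepsilon_n}$, $V_n$, $K_n$, $Q_n$: the Gagliardo seminorm term passes by weak lower semicontinuity, while the $V$, $K$ and $Q$ terms pass by Fatou's lemma, each integrand being nonnegative and converging a.e. to its limit. Because $w_{\varepsilon_n}$ is a critical point of $\widetilde I_n$, that $\liminf$ equals $\liminf_n\widetilde I_n(w_{\varepsilon_n})=\liminf_n c_{\varepsilon_n}=m_0$. Therefore $m_{V(x^{\ast}),K(x^{\ast}),Q(x^{\ast})}\leq m_0$.

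To finish I would invoke the strict monotonicity of the ground state level: $m_{\nu,\kappa,\mu}$ is strictly increasing in $\nu$ and strictly decreasing in $\kappa$ and in $\mu$, proved verbatim as in Lemma \ref{lem4-3} by projecting a ground state of one problem onto the Nehari manifold of the other (using $w\not\equiv0$ and $F(w)\not\equiv0$). Since $V(x^{\ast})\geq V_0$, $K(x^{\ast})\leq K_0$ and $Q(x^{\ast})\leq Q_0$, this yields $m_0=m_{V_0,K_0,Q_0}\leq m_{V(x^{\ast}),K(x^{\ast}),Q(x^{\ast})}$, with equality precisely when $V(x^{\ast})=V_0$, $K(x^{\ast})=K_0$ and $Q(x^{\ast})=Q_0$. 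Combining with the reverse inequality of the previous step, all these equalities hold, i.e. $x^{\ast}\in\Theta_V\cap\Theta_K\cap\Theta_Q=\Theta$. The main obstacle is the energy comparison of the third step: one must carefully separate the terms that pass to the limit by weak lower semicontinuity from those handled by Fatou's lemma, and in particular control the variable coefficients $V_n,K_n,Q_n$ inside the energy while only weak convergence of $w_{\varepsilon_n}$ is available.
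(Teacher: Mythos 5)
Your proposal is correct, but it is organized along a genuinely different route than the paper's. The paper proves the lemma by contradiction: assuming, say, $V(x^{\ast})>V_0$, it repeats the chain of inequalities \eqref{equ5-4} from Lemma \ref{lem5-2} with $I_{\infty}$ replaced by $I^{x^{\ast}}$; this recycles the machinery of Lemma \ref{lem5-2} (the projections $t_nw_n\in\mathcal{N}_0$ with $I_0(t_nw_n)\rightarrow m_0$ and the strong $H^s$-convergence of $w_n$ obtained from the $(PS)_{m_0}$ analysis), the strictness being injected through $I_0(tw)<I^{x^{\ast}}(tw)$. You instead argue directly: you pass to the limit in the translated equation \eqref{equ5-1-0} to identify the weak limit $w\not\equiv0$ as a solution of the frozen problem at $x^{\ast}$, deduce $m_{V(x^{\ast}),K(x^{\ast}),Q(x^{\ast})}\leq I^{x^{\ast}}(w)\leq\liminf_n c_{\varepsilon_n}=m_0$ through the quantity $I-\frac14\langle I',\cdot\rangle$ (whose Poisson term cancels, so weak lower semicontinuity of the Gagliardo seminorm plus Fatou with the nonnegative integrands from \eqref{equ2-3} and $s>\frac34$ suffice), and conclude by the strict monotonicity of $(\nu,\kappa,\mu)\mapsto m_{\nu,\kappa,\mu}$, proved by the projection trick of Lemma \ref{lem4-3} together with Proposition \ref{pro4-1}. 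What your route buys: it needs only weak and a.e.\ convergence of $w_{\varepsilon_n}$ (no strong convergence, no Nehari projections of the $w_{\varepsilon_n}$ themselves), and it isolates the level comparison into a reusable monotonicity statement; note that only the implication ``parameters differ $\Rightarrow$ strict inequality of levels'' is needed, which is exactly what the projection argument yields. What it costs: you must justify the passage to the limit in the equation, in particular for the nonlocal term $\phi_{w_{\varepsilon_n}}^tw_{\varepsilon_n}$, where the statement of Lemma \ref{lem3-1}$(iv)$ assumes global strong $L^r$ convergence, so you should invoke the a.e.-convergence and boundedness argument inside its proof (as the paper also does for $(PS)$ sequences); and you need the existence of positive ground states for the frozen autonomous problem, i.e.\ Proposition \ref{pro4-1}, to run the monotonicity step. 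Both ingredients are available in the paper, so your proof closes.
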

\begin{proof}
Set
\begin{align*}
I^{x^{\ast}}(v)=\frac{1}{2}\int_{\mathbb{R}^3}(|(-\Delta)^{\frac{s}{2}}v|^2+V(x^{\ast}) v^2)\,{\rm d}x+&\frac{1}{4}\int_{\mathbb{R}^3}\phi_v^tv^2\,{\rm d}x-\int_{\mathbb{R}^3} K(x^{\ast})F(v)\,{\rm d}x\\
&-\frac{1}{2_s^{\ast}}\int_{\mathbb{R}^3}Q(x^{\ast})|v(x)|^{2_s^{\ast}}\,{\rm d}x.
\end{align*}
Suppose that $V(x^{\ast})>V_0$. Taking the similar arguments of Lemma \ref{lem5-2} and replacing $I_{\infty}$ by $I^{x^{\ast}}$ in \eqref{equ5-4}, we can obtain a contradiction. Hence, $x^{\ast}\in\Theta_V$. By similar discussion, we can obtain a contradiction in the case $x^{\ast}\in\Theta_K\cap\Theta_Q$. Therefore, $x^{\ast}\in \Theta=\Theta_V\cap\Theta_K\cap\Theta_Q$ and the proof is completed.
\end{proof}

Since $w_{\varepsilon}$ is a positive ground state solution of problem \eqref{equ5-1-0} and $I_{\varepsilon}(w_{\varepsilon})=c_{\varepsilon}$ (using the invariance of translation),
by Lemma \ref{lem4-4}, it is easy to check that there exists $\widetilde{\varepsilon}>0$ such that for any $\varepsilon\in(0,\widetilde{\varepsilon})$, $w_{\varepsilon}$ is bounded in $H^s(\mathbb{R}^3)$ by a constant which is independent of $\varepsilon$. Hence, for any $\varepsilon_n\rightarrow0$, the subsequence $\{w_{\varepsilon_n}\}$ is bounded in $H^s(\mathbb{R}^3)$, we may assume that up to a subsequence, $w_{\varepsilon_n}\rightharpoonup w_0$ in $H^s(\mathbb{R}^3)$ and by Lemma \ref{lem5-2}, up to a subsequence, we also may assume that $\varepsilon_ny_{\varepsilon_n}\rightarrow x_0\in\Theta$ as $n\rightarrow\infty$.

\begin{lemma}\label{lem5-4}
$w_{\varepsilon_n}\rightarrow w_0$ in $H^s(\mathbb{R}^3)$ and $w_0$ is a positive ground state solution of the following problem
\begin{equation}\label{equ5-4-1}
(-\Delta)^su+V(x_0)u+\phi_{u}^tu=K(x_0)f(u)+Q(x_0)|u|^{2_s^{\ast}-2}u.
\end{equation}
\end{lemma}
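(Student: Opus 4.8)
The plan is to pass to the weak limit in the equation, pin down the energy level with Fatou's lemma, and then upgrade weak to strong convergence exploiting the structural inequality $V\geq V_0$. Since $x_0\in\Theta=\Theta_V\cap\Theta_K\cap\Theta_Q$, one has $V(x_0)=V_0$, $K(x_0)=K_0$, $Q(x_0)=Q_0$, so \eqref{equ5-4-1} is exactly the autonomous equation \eqref{equ2-5} with $\nu=V_0$, $\kappa=K_0$, $\mu=Q_0$, whose ground state level is $m_0$. First I would record that $w_0\not\equiv0$: translating \eqref{equ5-1} gives $\int_{B_R(0)}|w_{\varepsilon_n}|^2\,{\rm d}x\geq\sigma>0$, and $w_{\varepsilon_n}\rightarrow w_0$ in $L^2_{loc}(\mathbb{R}^3)$ yields $\int_{B_R(0)}|w_0|^2\,{\rm d}x\geq\sigma$. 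Next I would test \eqref{equ5-1-0} against an arbitrary $\varphi\in C_0^{\infty}(\mathbb{R}^3)$ and let $n\to\infty$: the $(-\Delta)^{\frac{s}{2}}$-term passes by weak convergence in $H^s$; on ${\rm supp}\,\varphi$ the coefficients $V(\varepsilon_n x+\varepsilon_n y_{\varepsilon_n})$, $K(\varepsilon_n x+\varepsilon_n y_{\varepsilon_n})$, $Q(\varepsilon_n x+\varepsilon_n y_{\varepsilon_n})$ converge uniformly to $V_0,K_0,Q_0$ by continuity; the subcritical terms converge via the growth control \eqref{equ2-2}, a.e.\ convergence and Vitali's theorem, the critical term because $|w_{\varepsilon_n}|^{2_s^{\ast}-2}w_{\varepsilon_n}$ is bounded in $L^{2_s^{\ast}/(2_s^{\ast}-1)}_{loc}(\mathbb{R}^3)$ and converges a.e., and the nonlocal term because $\phi_{w_{\varepsilon_n}}^t$ is bounded in $\mathcal{D}^{t,2}(\mathbb{R}^3)$ and converges pointwise (as established inside the proof of $(iv)$ of Lemma \ref{lem3-1}), whence $\phi_{w_{\varepsilon_n}}^t w_{\varepsilon_n}\rightarrow\phi_{w_0}^t w_0$ in $L^p_{loc}(\mathbb{R}^3)$ for a suitable $p>1$. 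This shows $I_0'(w_0)=0$, so $w_0\in\mathcal{N}_0$ and $I_0(w_0)\geq m_0$.

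Then I would establish $I_0(w_0)\leq m_0$. Testing \eqref{equ5-1-0} against $w_{\varepsilon_n}$ and using translation invariance gives the energy identity
\begin{align*}
c_{\varepsilon_n}=I_{\varepsilon_n}(w_{\varepsilon_n})=&\frac{1}{4}\int_{\mathbb{R}^3}\Big(|(-\Delta)^{\frac{s}{2}}w_{\varepsilon_n}|^2+V(\varepsilon_n x+\varepsilon_n y_{\varepsilon_n})w_{\varepsilon_n}^2\Big)\,{\rm d}x+\frac{4s-3}{12}\int_{\mathbb{R}^3}Q(\varepsilon_n x+\varepsilon_n y_{\varepsilon_n})|w_{\varepsilon_n}|^{2_s^{\ast}}\,{\rm d}x\\
&+\int_{\mathbb{R}^3}K(\varepsilon_n x+\varepsilon_n y_{\varepsilon_n})\Big(\frac{1}{4}f(w_{\varepsilon_n})w_{\varepsilon_n}-F(w_{\varepsilon_n})\Big)\,{\rm d}x,
\end{align*}
in which every integrand is nonnegative (by \eqref{equ2-3} for the $K$-term and $s>\frac34$ for the $Q$-term). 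Since each coefficient tends pointwise to $V_0,K_0,Q_0$ and $w_{\varepsilon_n}\rightarrow w_0$ a.e., weak lower semicontinuity of the first term and Fatou's Lemma for the other two give $\liminf_n c_{\varepsilon_n}\geq I_0(w_0)-\frac{1}{4}\langle I_0'(w_0),w_0\rangle=I_0(w_0)$. By Lemma \ref{lem4-4}, $m_0=\lim_n c_{\varepsilon_n}\geq I_0(w_0)\geq m_0$, so $I_0(w_0)=m_0$ and $w_0$ is a ground state solution of \eqref{equ5-4-1}.

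These two steps force all the lower-semicontinuity estimates above to be equalities in the limit, since a finite sum of nonnegative sequences whose sum converges to the sum of the individual lower limits must converge termwise. In particular $\int_{\mathbb{R}^3}|(-\Delta)^{\frac{s}{2}}w_{\varepsilon_n}|^2\,{\rm d}x\rightarrow\int_{\mathbb{R}^3}|(-\Delta)^{\frac{s}{2}}w_0|^2\,{\rm d}x$ and $\int_{\mathbb{R}^3}V(\varepsilon_n x+\varepsilon_n y_{\varepsilon_n})w_{\varepsilon_n}^2\,{\rm d}x\rightarrow V_0\int_{\mathbb{R}^3}w_0^2\,{\rm d}x$. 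Because $(V)$ gives $V(\varepsilon_n x+\varepsilon_n y_{\varepsilon_n})\geq V_0$ for every $x$, the second limit yields $V_0\limsup_n\|w_{\varepsilon_n}\|_2^2\leq V_0\|w_0\|_2^2$, and combined with $\liminf_n\|w_{\varepsilon_n}\|_2^2\geq\|w_0\|_2^2$ this forces $\|w_{\varepsilon_n}\|_2\rightarrow\|w_0\|_2$. Since $w_{\varepsilon_n}\rightharpoonup w_0$ and $(-\Delta)^{\frac{s}{2}}w_{\varepsilon_n}\rightharpoonup(-\Delta)^{\frac{s}{2}}w_0$ in $L^2(\mathbb{R}^3)$ with the respective norms converging, I conclude $w_{\varepsilon_n}\rightarrow w_0$ strongly in $H^s(\mathbb{R}^3)$.

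Finally, $w_0\geq0$ (an a.e.\ limit of nonnegative functions) and $w_0\not\equiv0$, so repeating verbatim the regularity and maximum-principle argument from the proof of Proposition \ref{pro4-1} — the $L^{\infty}$-bound of \cite{DMV}, then $C^{1,\alpha}$-regularity by \cite{S} using $s>\frac34$, then the pointwise representation of $(-\Delta)^s$ from \cite{DPV} — gives $w_0>0$ on $\mathbb{R}^3$, completing the proof. I expect the delicate point to be not the weak convergence but the upgrade to strong convergence in the presence of the critical exponent; what makes it essentially free here is that $w_{\varepsilon_n}$ is an exact critical point (a ground state), so the energy identity above is an equality each of whose nonnegative pieces is trapped from below by Fatou, while the hypothesis $V\geq V_0$ prevents $L^2$-mass from escaping to spatial infinity, so no further concentration--compactness analysis is required.
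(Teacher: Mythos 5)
Your proof is correct, and it takes a genuinely different route from the paper's. The paper handles the strong convergence by projecting $w_{\varepsilon_n}$ onto the limit Nehari manifold: it picks $t_n>0$ with $t_nw_{\varepsilon_n}\in\mathcal{N}_0$, shows via $\eqref{equ5-3}$ and Lemma \ref{lem4-4} that $\{t_nw_{\varepsilon_n}\}$ is a minimizing sequence in $\mathcal{N}_0$, upgrades it to a $(PS)_{m_0}$ sequence by the Ekeland argument from Lemma \ref{lem3-1}$(vi)$, and then re-invokes the concentration-compactness machinery of Lemma \ref{lem4-1} and Proposition \ref{pro4-1} to get $t_nw_{\varepsilon_n}\rightarrow tw_0$ and hence $w_{\varepsilon_n}\rightarrow w_0$. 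You instead pass to the weak limit in the Euler--Lagrange equation first (giving $I_0'(w_0)=0$ directly), and then read the strong convergence off the energy identity $c_{\varepsilon_n}=\Psi(w_{\varepsilon_n})$: since this is a sum of manifestly nonnegative pieces and $\lim_n c_{\varepsilon_n}=m_0=I_0(w_0)$ by Lemma \ref{lem4-4}, weak lower semicontinuity of the Gagliardo seminorm and Fatou for the remaining terms force termwise convergence, and the hypothesis $V\geq V_0$ (plus $V(\varepsilon_n x+\varepsilon_n y_{\varepsilon_n})\rightarrow V_0$ pointwise) lets you trade the weighted-$L^2$ convergence for plain $L^2$-norm convergence, giving $w_{\varepsilon_n}\rightarrow w_0$ in $H^s(\mathbb{R}^3)$ with no further compactness analysis. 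Your version is more elementary and more self-contained, avoids the $t_n$-projection and the Ekeland/$(PS)$ detour, and makes explicit where the structural inequality $V\geq V_0$ does the work (it is needed precisely to prevent $L^2$-mass from leaking, whereas the critical term and the subcritical $K$-term control themselves through positivity). One caveat to note: the clean reduction $\liminf\sum\geq\sum\liminf=\lim\sum$ forcing termwise convergence only works because every term is nonnegative — this is exactly why $s>\frac34$ (so $4s-3>0$) and the sign condition $(\ref{equ2-3})$ are essential, and you rightly flag both.
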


\begin{proof}
Similar proof of Lemma \ref{lem5-2}, it is easy to check that $w_{\varepsilon_n}\rightarrow w_0$ in $H^s(\mathbb{R}^3)$. Therefore, $\langle  I_{\varepsilon_n}'(w_{\varepsilon_n}),\varphi\rangle=\langle  (I^{x_0})'(w_0),\varphi\rangle$, for any $\varphi\in H^s(\mathbb{R}^3)$. This means that $w_0$ is a nontrivial ground state solution of problem \eqref{equ5-4-1}. By the similar argument of Proposition \ref{pro4-1}, we can complete the proof.
\end{proof}

Moreover, we have the following vanishing estimate of $\{w_{\varepsilon}\}$ at infinity.
\begin{lemma}\label{lem5-5}
$\lim\limits_{|x|\rightarrow\infty}w_{\varepsilon}(x)=0$ uniformly in $\varepsilon\in(0,\widetilde{\varepsilon})$.
\end{lemma}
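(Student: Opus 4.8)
\emph{Proof strategy.} The plan is to upgrade the already known fact that $w_{\varepsilon}(x)\to0$ as $|x|\to\infty$ for each fixed $\varepsilon$ (Proposition \ref{pro4-2}) into a statement uniform in $\varepsilon$, by first establishing a uniform $L^{\infty}$ and Hölder bound for the family $\{w_{\varepsilon}\}_{\varepsilon\in(0,\widetilde{\varepsilon})}$ and then running a concentration--compactness type contradiction argument based on the strong convergence from Lemma \ref{lem5-4}.

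\textbf{Step 1 (uniform bounds).} Recall that $w_{\varepsilon}$ solves \eqref{equ5-1-0} with coefficients $V(\varepsilon\cdot+\varepsilon y_{\varepsilon})$, $K(\varepsilon\cdot+\varepsilon y_{\varepsilon})$, $Q(\varepsilon\cdot+\varepsilon y_{\varepsilon})$ bounded by $\|V\|_{\infty}$, $K_0$, $Q_0$, and that (as noted in the discussion preceding Lemma \ref{lem5-4}, which rests on Lemma \ref{lem4-4}) $\|w_{\varepsilon}\|\le C$ uniformly for $\varepsilon\in(0,\widetilde{\varepsilon})$, hence $\|w_{\varepsilon}\|_{2_s^{\ast}}\le C$ uniformly. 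Repeating the De Giorgi--Moser iteration of Proposition 4.1.1 in \cite{DMV} as in the proof of Proposition \ref{pro4-1} --- see \eqref{equ4-5-0} --- and noting that the constants there depend only on $\|w_{\varepsilon}\|_{2_s^{\ast}}$, one obtains $\|w_{\varepsilon}\|_{\infty}\le M$ with $M$ independent of $\varepsilon$. Splitting the Riesz kernel over $|x-y|<1$ and $|x-y|\ge1$ gives $\|\phi_{w_{\varepsilon}}^t\|_{\infty}\le C(\|w_{\varepsilon}\|_{\infty}^2+\|w_{\varepsilon}\|_2^2)\le C'$ uniformly, so the right-hand side
\[
g_{\varepsilon}:=K(\varepsilon\cdot+\varepsilon y_{\varepsilon})f(w_{\varepsilon})+Q(\varepsilon\cdot+\varepsilon y_{\varepsilon})w_{\varepsilon}^{2_s^{\ast}-1}-V(\varepsilon\cdot+\varepsilon y_{\varepsilon})w_{\varepsilon}-\phi_{w_{\varepsilon}}^tw_{\varepsilon}
\]
of $(-\Delta)^sw_{\varepsilon}=g_{\varepsilon}$ satisfies $\|g_{\varepsilon}\|_{\infty}\le C''$ uniformly. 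By the interior regularity estimate for the fractional Laplacian (Proposition 2.9 in \cite{S}, already used in Proposition \ref{pro4-1}), $w_{\varepsilon}\in C^{0,\alpha}(\mathbb{R}^3)$ with Hölder seminorm bounded independently of $\varepsilon$; thus $\{w_{\varepsilon}\}$ is uniformly bounded and equicontinuous.

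\textbf{Step 2 (contradiction).} Suppose the conclusion fails: there are $\delta_0>0$, $\varepsilon_n\in(0,\widetilde{\varepsilon})$ and $x_n$ with $|x_n|\to\infty$ and $w_{\varepsilon_n}(x_n)\ge\delta_0$. By the uniform equicontinuity there is a fixed $r_0>0$ with $w_{\varepsilon_n}\ge\delta_0/2$ on $B_{r_0}(x_n)$, so $\int_{B_{r_0}(x_n)}|w_{\varepsilon_n}|^2\,{\rm d}x\ge\tfrac{\delta_0^2}{4}|B_{r_0}|=:\sigma_0>0$. Passing to a subsequence, either $\varepsilon_n\to0$ or $\varepsilon_n\to\varepsilon^{\ast}\in(0,\widetilde{\varepsilon}]$. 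In the first case Lemma \ref{lem5-4} gives (after a further subsequence) $w_{\varepsilon_n}\to w_0$ strongly in $H^s(\mathbb{R}^3)$, hence in $L^2(\mathbb{R}^3)$; since $|x_n|\to\infty$,
\[
\int_{B_{r_0}(x_n)}|w_{\varepsilon_n}|^2\,{\rm d}x\le2\int_{B_{r_0}(x_n)}|w_{\varepsilon_n}-w_0|^2\,{\rm d}x+2\int_{B_{r_0}(x_n)}|w_0|^2\,{\rm d}x\longrightarrow0,
\]
contradicting $\sigma_0>0$. In the case $\varepsilon^{\ast}>0$, translate $\widetilde{w}_n:=w_{\varepsilon_n}(\cdot+x_n)$: the $\widetilde{w}_n$ are bounded in $H^s$ and equicontinuous, so $\widetilde{w}_n\to\widetilde{w}$ locally uniformly and weakly in $H^s$, with $\widetilde{w}\ge0$ and $\widetilde{w}(0)\ge\delta_0$; since $|\varepsilon_n(x_n+y_{\varepsilon_n})|\to\infty$, using $(K)$, $(Q_0)$ and $\liminf_{|x|\to\infty}V(x)=V_{\infty}$, $\widetilde{w}$ is a nontrivial nonnegative subsolution of the problem at infinity (potentials $V_{\infty},K_{\infty},Q_{\infty}$). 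As $w_{\varepsilon_n}$ also keeps mass near the origin (Lemma \ref{lem5-1}) while $|x_n|\to\infty$, the two-bubble splitting argument of Lemma \ref{lem5-2} forces $I_{\varepsilon_n}(w_{\varepsilon_n})\ge c_{\varepsilon_n}+m_{\infty}+o_n(1)$, i.e. $m_{\infty}\le o_n(1)$, a contradiction.

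\textbf{Main obstacle.} The delicate point is the uniformity in Step 1: the $L^{\infty}$ bound and the Hölder estimate must come with constants depending only on the (uniformly controlled) quantities $\|w_{\varepsilon}\|_{2_s^{\ast}}$ and $\|g_{\varepsilon}\|_{\infty}$, not on $\varepsilon$ --- for a single $\varepsilon$ the decay is free from Proposition \ref{pro4-2}, and the entire content of the lemma is this uniformity. A cleaner variant that bypasses the case $\varepsilon^{\ast}>0$ is to prove first the uniform tightness $\lim_{R\to\infty}\sup_{\varepsilon\in(0,\widetilde{\varepsilon})}\int_{\mathbb{R}^3\setminus B_R(0)}(|w_{\varepsilon}|^2+|w_{\varepsilon}|^{2_s^{\ast}})\,{\rm d}x=0$, and then use the pointwise estimate $\sup_{B_{1/2}(z)}w_{\varepsilon}\le C\big(\|w_{\varepsilon}\|_{L^2(B_1(z))}+\int_{|y-z|>1}\tfrac{w_{\varepsilon}(y)}{|y-z|^{3+2s}}\,{\rm d}y\big)$, controlling the nonlocal tail by splitting it over $|y|\le R$ and $|y|>R$ and invoking the tightness; this route moreover prepares the polynomial decay in part $(iii)$ of Theorem \ref{thm1-1}, obtained by comparison with the barrier $(1+|x|)^{-(3+2s)}$ as in \cite{FQT,AMi,HZ1}.
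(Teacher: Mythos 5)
Your proposal is correct in substance but takes a genuinely different route from the paper. Both arguments start from the same uniform $L^{\infty}$ bound \eqref{equ4-5-0} for $w_{\varepsilon}$ and both ultimately rely on the strong convergence provided by Lemma \ref{lem5-4}; the difference lies in how the uniform decay is extracted. The paper rewrites the equation as $(-\Delta)^{s}w_{n}+w_{n}=g_{n}$ with $g_{n}=w_{n}+f(x,w_{n})$ uniformly bounded, represents $w_{n}=\mathcal{K}\ast g_{n}$ through the Bessel kernel of \cite{FQT}, and splits the convolution over $\{|x-y|\geq 1/\delta\}$ (controlled by the kernel decay $\mathcal{K}(x)\leq C|x|^{-(3+2s)}$) and $\{|x-y|<1/\delta\}$ (controlled by H\"older's inequality, the convergence $g_{n}\rightarrow g_{0}$ in $L^{6/(3-2s)}$ and the smallness of $\|g_{0}\|_{L^{2}(B_{\delta})}$ for $|x|$ large, with the finitely many small indices handled separately); this quantitative kernel machinery is then reused for the polynomial decay in Lemma \ref{lem5-6}. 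You instead upgrade the uniform $L^{\infty}$ bound to uniform equicontinuity via the regularity estimate of \cite{S} and run a soft compactness contradiction: a sequence with $w_{\varepsilon_{n}}(x_{n})\geq\delta_{0}$ and $|x_{n}|\rightarrow\infty$ would retain a fixed amount of $L^{2}$ mass in $B_{r_{0}}(x_{n})$, which is impossible once $w_{\varepsilon_{n}}\rightarrow w_{0}$ strongly in $L^{2}$. For sequences $\varepsilon_{n}\rightarrow0$ this is complete and arguably simpler than the convolution estimate; what the paper's method buys is an explicit pointwise bound that feeds directly into the barrier comparison of Lemma \ref{lem5-6}, which your soft argument does not supply (though it is not needed for the present lemma). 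The one weak point is your case $\varepsilon_{n}\rightarrow\varepsilon^{\ast}>0$: the asserted inequality $I_{\varepsilon_{n}}(w_{\varepsilon_{n}})\geq c_{\varepsilon_{n}}+m_{\infty}+o_{n}(1)$ is not a consequence of Lemma \ref{lem5-2} as stated, but requires the cut-off decomposition and Nehari-projection argument of the dichotomy case in Lemma \ref{lem3-4} (the mere subsolution property of $\widetilde{w}$ does not yield it), so this step should be written out. Note, however, that the paper's own proof only treats sequences $\varepsilon_{n}\rightarrow0$, so on the scope of the uniformity you are, if anything, more careful than the original argument.
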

\begin{proof}
For any $\varepsilon_n\rightarrow0$, $w_n:=w_{\varepsilon_n}$ is a positive ground state solution of problem \eqref{equ5-1-0}, then $f(x,w_n):=K(\varepsilon_n x+\varepsilon_n y_{\varepsilon_n})f(w_n)+Q(\varepsilon_n x+\varepsilon_n y_{\varepsilon_n})|w_n|^{2_s^{\ast}-2}w_n-V(\varepsilon_n x+\varepsilon_n y_{\varepsilon_n})w_{\varepsilon_n}+\phi_{w_n}^tw_n\leq C(1+|w_n|^{2_s^{\ast}-1})$, where $C$ is independent of $n$ and $w_n$, by the estimate \eqref{equ4-5-0}, we have that $\|w_n\|_{\infty}\leq C\|w_n\|^{\alpha}\leq C$, where $C>0$ is a constant independent of $n$.

Now we borrow the ideas in \cite{AMi} to complete the proof. For this purpose, we rewrite problem \eqref{equ5-1-0} as follows
\begin{equation*}
(-\Delta)^sw_n+w_n=g_n(x)
\end{equation*}
where $g_n(x):=w_n+f(x,w_n)$.
Clearly, $g_n\in L^{\infty}(\mathbb{R}^3)$ and is uniformly bounded. From Lemma \ref{lem5-4}, for $n\rightarrow\infty$, we have that $g_n\rightarrow g_0$ in $L^r(\mathbb{R}^3)$ for $2\leq r\leq 2_s^{\ast}$, where $g_0=w_0+K(x_0)f(w_0)+Q(x_0)|w_0|^{2_s^{\ast}-2}w_0-V(x_0)w_0-\phi_{w_0}w_0$. Using some results found in \cite{FQT}, we see that
\begin{equation*}
w_n(x)=\int_{\mathbb{R}^3}\mathcal{K}(x-y)g_n(y)\,{\rm d}y
\end{equation*}
where $\mathcal{K}$ is a Bessel potential, which possesses the following properties:\\
$(\mathcal{K}_1)$ $\mathcal{K}$ is positive, radially symmetric and smooth in $\mathbb{R}^3\backslash\{0\}$;\\
$(\mathcal{K}_2)$ there exists a constant $C>0$ such that $\mathcal{K}(x)\leq \frac{C}{|x|^{3+2s}}$ for all $x\in\mathbb{R}^3\backslash\{0\}$;\\
$(\mathcal{K}_3)$ $\mathcal{K}\in L^\tau(\mathbb{R}^3)$ for $\tau\in[1,\frac{3}{3-2s})$.

We define two sets $A_{\delta}=\{y\in\mathbb{R}^3\,\,|\,\, |x-y|\geq\frac{1}{\delta}\}$ and $B_{\delta}=\{y\in\mathbb{R}^3\,\,|\,\, |x-y|<\frac{1}{\delta}\}$.
Hence,
\begin{equation*}
0\leq w_n(x)\leq \int_{\mathbb{R}^3}\mathcal{K}(x-y)|g_n(y)|\,{\rm d}y=\int_{A_{\delta}}\mathcal{K}(x-y)|g_n(y)|\,{\rm d}y+\int_{B_{\delta}}\mathcal{K}(x-y)|g_n(y)|\,{\rm d}y.
\end{equation*}
From the definition of $A_{\delta}$ and $(\mathcal{K}_2)$, we have that for all $n\in\mathbb{N}$,
\begin{equation*}
\int_{A_{\delta}}\mathcal{K}(x-y)|g_n(y)|\,{\rm d}y\leq C\delta^s\|g_n\|_{\infty}\int_{A_{\delta}}\frac{1}{|x-y|^{3+s}}\,{\rm d}y\leq C\delta^s\int_{A_{\delta}}\frac{1}{|x-y|^{3+s}}\,{\rm d}y:=C\delta^{2s}.
\end{equation*}

On the other hand, by H\"{o}lder's inequality and $(\mathcal{K}_3)$, we deduce that
\begin{align*}
&\int_{B_{\delta}}\mathcal{K}(x-y)|g_n(y)|\,{\rm d}y\leq\int_{B_{\delta}}\mathcal{K}(x-y)|g_n-g_0|\,{\rm d}y+\int_{B_{\delta}}\mathcal{K}(x-y)|g_0|\,{\rm d}y\\
&\leq\Big(\int_{B_{\delta}}\mathcal{K}^{\frac{6}{3+2s}}\,{\rm d}y\Big)^{\frac{3+2s}{6}}\Big(\int_{B_{\delta}}|g_n-g_0|^{\frac{6}{3-2s}}\,{\rm d}y\Big)^{\frac{3-2s}{6}}+
\Big(\int_{B_{\delta}}\mathcal{K}^2\,{\rm d}y\Big)^{\frac{1}{2}}\Big(\int_{B_{\delta}}|g_0|^2\,{\rm d}y\Big)^{\frac{1}{2}}\\
&\leq\Big(\int_{\mathbb{R}^3}\mathcal{K}^{\frac{6}{3+2s}}\,{\rm d}y\Big)^{\frac{3+2s}{6}}\Big(\int_{\mathbb{R}^3}|g_n-g_0|^{\frac{6}{3-2s}}\,{\rm d}y\Big)^{\frac{3-2s}{6}}+
\Big(\int_{\mathbb{R}^3}\mathcal{K}^2\,{\rm d}y\Big)^{\frac{1}{2}}\Big(\int_{B_{\delta}}|g_0|^2\,{\rm d}y\Big)^{\frac{1}{2}}
\end{align*}
where we have used the fact that $s>\frac{3}{4}$ so that $\frac{6}{3+2s}<\frac{3}{3-2s}$ and $2<\frac{3}{3-2s}$.

Since $\Big(\int_{B_{\delta}}|g_0|^2\,{\rm d}y\Big)^{\frac{1}{2}}\rightarrow0$ as $|x|\rightarrow+\infty$, thus, we deduce that there exist $n_0\in\mathbb{N}$ and $R_0>0$ independence of $\varepsilon>0$ such that
\begin{equation*}
\int_{B_{\delta}}\mathcal{K}(x-y)|g_n(y)|\,{\rm d}y\leq \delta,\quad \forall n\geq n_0\quad \text{and}\quad |x|\geq R_0.
\end{equation*}
Hence,
\begin{equation*}
\int_{\mathbb{R}^3}\mathcal{K}(x-y)|g_n(y)|\,{\rm d}y\leq C\delta^{2s}+\delta,\quad \forall n\geq n_0\quad \text{and}\quad |x|\geq R_0.
\end{equation*}
For each $n\in\{1,2,\cdots,n_0-1\}$, there exists $R_n>0$ such that $\Big(\int_{B_{\delta}}|g_n|^2\,{\rm d}y\Big)^{\frac{1}{2}}< \delta$ as $|x|\geq R_n$. Thus, for $|x|\geq R_n$, we have that
\begin{align*}
\int_{\mathbb{R}^3}\mathcal{K}(x-y)|g_n(y)|\,{\rm d}y&\leq C\delta^{2s}+\int_{B_{\delta}}\mathcal{K}(x-y)|g_n(y)|\,{\rm d}y\\
&\leq C\delta^{2s}+\|\mathcal{K}\|_2\Big(\int_{B_{\delta}}|g_n|^2\,{\rm d}y\Big)^{\frac{1}{2}}\leq C(\delta^{2s}+\delta)
\end{align*}
for each $n\in\{1,2,\cdots,n_0-1\}$.
Therefore, taking $R=\max\{R_0,R_1,\cdots, R_{n_0-1}\}$, we infer that for any $n\in\mathbb{N}$, there holds
\begin{equation*}
0\leq w_n(x)\leq\int_{\mathbb{R}^3}\mathcal{K}(x-y)|g_n(y)|\,{\rm d}y\leq C\delta^{2s}+\delta,\quad \text{for all}\quad |x|\geq R
\end{equation*}
implies that $\lim\limits_{|x|\rightarrow\infty}w_n(x)=0$ uniformly in $n\in\mathbb{N}$. As a result, the conclusion follows from the arbitrariness of $\varepsilon_n$.
\end{proof}

Now, we give the estimate of decay properties of solutions $u_{\varepsilon}$.
\begin{lemma}\label{lem5-6}
There exists a constant $C>0$ such that
\begin{equation*}
w_{\varepsilon}(x)\leq \frac{C}{1+|x|^{3+2s}}, \quad \text{for all}\,\, x\in\mathbb{R}^3\,\, \text{and}\,\, \varepsilon\in(0,\widetilde{\varepsilon}).
\end{equation*}
\end{lemma}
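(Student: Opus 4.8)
The plan is to run a comparison (barrier) argument for the operator $(-\Delta)^s+\frac{V_0}{2}$ on the exterior of a large ball, exploiting that a function decaying like $|x|^{-(3+2s)}$ is a supersolution there. The crucial point throughout is that every radius and constant produced below must be chosen independently of $\varepsilon\in(0,\widetilde{\varepsilon})$; this is guaranteed by Lemma \ref{lem5-5}, whose proof in fact yields $\sup_{\varepsilon\in(0,\widetilde{\varepsilon})}\|w_\varepsilon\|_{L^\infty(\mathbb{R}^3)}<\infty$ together with $w_\varepsilon(x)\to0$ as $|x|\to\infty$ uniformly in $\varepsilon$. Recall also that the ground states $u_\varepsilon$, hence the $w_\varepsilon$, are of class $C^{1,\alpha}$ (Propositions \ref{pro4-1}--\ref{pro4-2}), so that $(-\Delta)^s w_\varepsilon$ may be evaluated pointwise.

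First I would derive a differential inequality for $w_\varepsilon$ outside a large ball. Rewriting \eqref{equ5-1-0} as
\[
(-\Delta)^s w_\varepsilon = K(\varepsilon x+\varepsilon y_\varepsilon)f(w_\varepsilon)+Q(\varepsilon x+\varepsilon y_\varepsilon)|w_\varepsilon|^{2_s^{\ast}-1}-V(\varepsilon x+\varepsilon y_\varepsilon)w_\varepsilon-\phi_{w_\varepsilon}^t w_\varepsilon,
\]
and using $(f_0)$ (which gives $f(t)/t\to0$ as $t\to0^{+}$), the fact that $2_s^{\ast}-2>0$, the boundedness of $K$ and $Q$, the lower bound $V\geq V_0$ and $\phi_{w_\varepsilon}^t\geq0$, the uniform vanishing of $w_\varepsilon$ at infinity furnishes a radius $R_0>0$, independent of $\varepsilon$, such that $K(\cdot)f(w_\varepsilon)+Q(\cdot)|w_\varepsilon|^{2_s^{\ast}-1}\leq\frac{V_0}{2}w_\varepsilon$ on $\mathbb{R}^3\setminus B_{R_0}(0)$. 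Consequently
\[
(-\Delta)^s w_\varepsilon+\frac{V_0}{2}w_\varepsilon\leq0\qquad\text{in }\mathbb{R}^3\setminus B_{R_0}(0),\quad\text{for every }\varepsilon\in(0,\widetilde{\varepsilon}).
\]

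Next I would compare $w_\varepsilon$ with a barrier. Following the construction in \cite{FQT} (see also \cite{AMi,HZ1}), there exist $R_1\geq R_0$, a positive continuous function $\bar w$ on $\mathbb{R}^3$ and constants $0<\sigma_1\leq\sigma_2$ such that $(-\Delta)^s\bar w+\frac{V_0}{2}\bar w\geq0$ in $\mathbb{R}^3\setminus B_{R_1}(0)$ and $\sigma_1|x|^{-(3+2s)}\leq\bar w(x)\leq\sigma_2|x|^{-(3+2s)}$ for $|x|\geq R_1$; this is precisely where the exponent $3+2s$ enters, since $(-\Delta)^s$ of a function homogeneous of degree $-(3+2s)$ near infinity is homogeneous of the lower degree $-(3+4s)$, so the zeroth order term dominates. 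Since $\bar w$ has a positive minimum on $\overline{B_{R_1}(0)}$ and $\sup_\varepsilon\|w_\varepsilon\|_\infty<\infty$, one fixes $M>0$ independent of $\varepsilon$ with $M\bar w\geq w_\varepsilon$ on $B_{R_1}(0)$; then $z_\varepsilon:=M\bar w-w_\varepsilon$ satisfies $(-\Delta)^s z_\varepsilon+\frac{V_0}{2}z_\varepsilon\geq0$ in $\mathbb{R}^3\setminus B_{R_1}(0)$, $z_\varepsilon\geq0$ on $B_{R_1}(0)$, and $z_\varepsilon(x)\to0$ as $|x|\to\infty$. The maximum principle for $(-\Delta)^s+\frac{V_0}{2}$ on exterior domains then forces $z_\varepsilon\geq0$, i.e. $w_\varepsilon(x)\leq M\sigma_2|x|^{-(3+2s)}$ for $|x|\geq R_1$; combined with the uniform $L^\infty$ bound on $B_{R_1}(0)$ this gives $w_\varepsilon(x)\leq C(1+|x|^{3+2s})^{-1}$ for all $x\in\mathbb{R}^3$ and all $\varepsilon\in(0,\widetilde{\varepsilon})$, with $C$ independent of $\varepsilon$.

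The routine parts are the differential inequality of the second paragraph and the $\varepsilon$-uniform choice of $R_0,R_1,M$, both of which follow directly from Lemma \ref{lem5-5}. The only substantive analytic ingredients are the existence of the barrier $\bar w$ with the sharp decay rate $|x|^{-(3+2s)}$ and the comparison principle for $(-\Delta)^s+\frac{V_0}{2}$ on exterior domains; these are classical in the fractional setting and I would borrow them from \cite{FQT,AMi,HZ1}, and this is the step I expect to require the most care.
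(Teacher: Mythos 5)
Your proposal is correct and takes essentially the same route as the paper: a barrier $W$ from \cite{FQT} decaying like $|x|^{-(3+2s)}$ and solving $(-\Delta)^sW+\frac{V_0}{2}W=0$ outside a ball, a uniform-in-$\varepsilon$ supersolution inequality $(-\Delta)^sw_\varepsilon+\frac{V_0}{2}w_\varepsilon\leq0$ derived from Lemma \ref{lem5-5}, and then comparison. The only difference is expository: where you invoke an exterior-domain maximum principle for $(-\Delta)^s+\frac{V_0}{2}$ as a black box, the paper proves it in place by setting $Z_\varepsilon=(\mathbb{B}+1)W-\mathbb{A}w_\varepsilon$, noting $Z_\varepsilon\to0$ at infinity so that a negative infimum would be attained, showing any minimizer lies outside $B_{R_1}$, and reaching a contradiction from the sign of the pointwise integral representation of $(-\Delta)^sZ_\varepsilon$ at that minimum — precisely the step you flag as the one needing the most care.
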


\begin{proof}
We borrow some ideas of the proof of Theorem 1.1 in \cite{HZ1} to give the proof of Lemma \ref{lem5-6}.  By Lemma 4.2 and Lemma 4.3 in \cite{FQT}, by scaling, there exists a continuous function $W$ such that
\begin{equation}\label{equ5-5}
0<W(x)\leq \frac{C}{1+|x|^{3+2s}}
\end{equation}
and
\begin{equation*}
(-\Delta)^sW+\frac{V_0}{2}W=0 \quad \text{on}\,\,\mathbb{R}^3\backslash B_R(0).
\end{equation*}
for some suitable $R>0$. By Lemma \ref{lem5-5}, there exists $R_1>0$ (we can choose $R_1>R$) large enough such that
\begin{align*}
(-\Delta)^sw_{\varepsilon}+\frac{V_0}{2}w_{\varepsilon}&=(-\Delta)^sw_{\varepsilon}+V(\varepsilon x)w_{\varepsilon}+(\frac{V_0}{2}-V(\varepsilon x))w_{\varepsilon}\\
&=K(\varepsilon x)f(w_{\varepsilon})+Q(\varepsilon)w_{\varepsilon}^{2_s^{\ast}-1}-\phi_{w_{\varepsilon}}^tw_{\varepsilon}+(\frac{V_0}{2}-V(\varepsilon x))w_{\varepsilon}\\
&\leq K_0f(w_{\varepsilon})+Q_0w_{\varepsilon}^{2_s^{\ast}-1}-\frac{V_0}{2}w_{\varepsilon}\leq0
\end{align*}
for any $x\in\mathbb{R}^3\backslash B_{R_1}(0)$. Therefore, we have obtained that
\begin{equation}\label{equ5-6}
(-\Delta)^sW(x)+\frac{V_0}{2}W\geq(-\Delta)^sw_{\varepsilon}+\frac{V_0}{2}w_{\varepsilon}\quad \text{on}\,\, \mathbb{R}^3\backslash B_{R_1}(0).
\end{equation}

Let $\mathbb{A}=\inf\limits_{B_{R_1}(0)}W>0$, $Z_{\varepsilon}(x)=(\mathbb{B}+1)W-\mathbb{A}w_{\varepsilon}$, where $\mathbb{B}=\sup\limits_{0<\varepsilon<\widetilde{\varepsilon}}\|w_{\varepsilon}\|_{\infty}\leq C<\infty$, where $C$ is some positive constant independent of $\varepsilon$. We claim that $Z_{\varepsilon}(x)\geq0$ for all $x\in\mathbb{R}^3$ and $\varepsilon\in(0,\widetilde{\varepsilon})$. If the claim is true, we have that
\begin{equation*}
w_{\varepsilon}(x)\leq\frac{\mathbb{B}+1}{\mathbb{A}}W\leq \frac{C}{1+|x|^{3+2s}}\quad \text{for all}\,\, x\in\mathbb{R}^3\quad \text{and}\quad \varepsilon\in(0,\widetilde{\varepsilon}_1)
\end{equation*}
and the conclusion is proved.

Suppose by contradiction that there exist $\varepsilon_0\in(0,\widetilde{\varepsilon}_1)$ and $x_{\varepsilon_0}^n\in\mathbb{R}^3$ such that
\begin{equation}\label{equ5-7}
\inf_{x\in\mathbb{R}^3}Z_{\varepsilon_0}(x)=\lim_{n\rightarrow\infty}Z_{\varepsilon_0}(x_{\varepsilon_0}^n)<0.
\end{equation}
Since $\lim\limits_{|x|\rightarrow\infty}W(x)=\lim\limits_{|x|\rightarrow\infty}w_{\varepsilon}(x)=0$ uniformly for $\varepsilon\in(0,\widetilde{\varepsilon}_1)$, then $\lim\limits_{|x|\rightarrow\infty}Z_{\varepsilon_0}(x)=0$. Hence, the sequence $\{x_{\varepsilon_0}^n\}$ is bounded and then up to a subsequence, we may assume that $x_{\varepsilon_0}^n\rightarrow \widetilde{x}_{\varepsilon_0}$. From \eqref{equ5-7}, we have that
\begin{equation}\label{equ5-8}
\inf_{x\in\mathbb{R}^3}Z_{\varepsilon_0}(x)=Z_{\varepsilon_0}(\widetilde{x}_{\varepsilon_0})<0.
\end{equation}
By \eqref{equ5-8}, we get
\begin{align*}
(-\Delta)^sZ_{\varepsilon_0}(\widetilde{x}_{\varepsilon_0})&+\frac{V_0}{2}Z_{\varepsilon_0}(\widetilde{x}_{\varepsilon_0})=\frac{V_0}{2}Z_{\varepsilon_0}(\widetilde{x}_{\varepsilon_0})\\
&-\frac{1}{2}C(3,s)
\int_{\mathbb{R}^3}\frac{Z_{\varepsilon_0}(\widetilde{x}_{\varepsilon_0}+y)+Z_{\varepsilon_0}(\widetilde{x}_{\varepsilon_0}-y)-2Z_{\varepsilon_0}(\widetilde{x}_{\varepsilon_0})}{|x-y|^{3+2s}}\,{\rm d}y<0.
\end{align*}
Note that $Z_{\varepsilon_0}(x)\geq \mathbb{A}\mathbb{B}+W-\mathbb{A}\mathbb{B}>0$ on $B_{R_1}(0)$, hence $\widetilde{x}_{\varepsilon_0}\in\mathbb{R}^3\backslash B_{R_1}(0)$. From \eqref{equ5-6}, by computation, we have that
\begin{align*}
(-\Delta)^sZ_{\varepsilon_0}(\widetilde{x}_{\varepsilon_0})+\frac{V_0}{2}Z_{\varepsilon_0}(\widetilde{x}_{\varepsilon_0})&=\Big[(\mathbb{B}+1)\Big((-\Delta)^sW+\frac{V_0}{2}W\Big)
\\
&-\mathbb{A}\Big((-\Delta)^sw_{\varepsilon_0}+\frac{V_0}{2}w_{\varepsilon_0}\Big)\Big]\Big|_{x=\widetilde{x}_{\varepsilon_0}}\geq0
\end{align*}
which is a contradiction. Thus, the claim holds true and the proof is completed.

\end{proof}

{\bf Proof of Theorem \ref{thm1-1}.}
$(i)$ Taking $\varepsilon_0=\min\{\widetilde{\varepsilon}_0, \widetilde{\varepsilon},\widehat{\varepsilon}\}$. By Proposition \ref{pro4-2}, for each $\varepsilon\in(0,\varepsilon_0)$, problem \eqref{equ2-4} has at least a positive ground state solution $v_{\varepsilon}$. Hence, let $\phi_{\varepsilon}=\phi_{v_{\varepsilon}}^t$, then $(v_{\varepsilon},\phi_{\varepsilon})$ is a positive solution for system \eqref{main} for $\varepsilon\in(0,\varepsilon_0)$.

$(ii)$ From Proposition \ref{lem4-2} and Lemma \ref{lem5-5}, there exists $R>0$ such that the global maximum point of $w_{\varepsilon}(x)=v_{\varepsilon}(x+y_{\varepsilon})$, denoted by $\widetilde{x}_{\varepsilon}$, is located in $B_{R}(0)$. Thus, the global maximum point of $v_{\varepsilon}$ is given by $x_{\varepsilon}=\widetilde{x}_{\varepsilon}+y_{\varepsilon}$. Observing that $u_{\varepsilon}(x)=v_{\varepsilon}(x/\varepsilon)$, then we have that $(u_{\varepsilon}(x),\phi_{u_{\varepsilon}}^t(x))$ is a positive ground state solution of system \eqref{main} and $u_{\varepsilon}$ has a global maximum point $z_{\varepsilon}=\varepsilon x_{\varepsilon}$. It follows from Lemma \ref{lem5-2}, Lemma \ref{lem5-3} and $\widetilde{x}_{\varepsilon}\in B_R(0)$ that $\lim\limits_{\varepsilon\rightarrow0}V(z_{\varepsilon})=V_0$, $\lim\limits_{\varepsilon\rightarrow0}K(z_{\varepsilon})=K_0$ and $\lim\limits_{\varepsilon\rightarrow0}Q(z_{\varepsilon})=Q_0$. Moreover, in view of Lemma \ref{lem5-4}, we see that $z_{\varepsilon}\rightarrow x_0$ if $\varepsilon\rightarrow0$, then $u_{\varepsilon}(\varepsilon x+z_{\varepsilon})$ converges to $u$ and $u$ is a solution for problem \eqref{equ5-4-1}. As a result, $(u,\phi)$ is a solution of system \eqref{equ1-2}.

$(iii)$ By Lemma \ref{lem5-6}, we have that
\begin{align*}
u_{\varepsilon}(x)=v_{\varepsilon}(x/\varepsilon)=w_{\varepsilon}(x/\varepsilon-y_{\varepsilon})&=w_{\varepsilon}(\frac{x+\varepsilon\widetilde{x}_{\varepsilon}-z_{\varepsilon}}{\varepsilon})
\leq\frac{C}{1+|\frac{x+\varepsilon\widetilde{x}_{\varepsilon}-z_{\varepsilon}}{\varepsilon}|^{3+2s}}\\
&\leq\frac{C\varepsilon^{3+2s}}{\varepsilon^{3+2s}+|x-z_{\varepsilon}|^{3+2s}-\varepsilon^{3+2s}R^{3+2s}}\\
&:=\frac{C\varepsilon^{3+2s}}{C_0\varepsilon^{3+2s}+|x-z_{\varepsilon}|^{3+2s}},
\end{align*}
where $C_0=1-R^{3+2s}$.

\section{Nonexistence of ground states}
In this section, our goal is to show the nonexistence of ground state solution to system \eqref{main}, that is, for each $\varepsilon>0$, the ground energy $c_{\varepsilon}$ is not attained.
\begin{lemma}\label{lem6-1}
Assume the continuous functions $V(x)$, $K(x)$, $Q(x)$ satisfies $(\mathcal{H})$ and $(f_0)$--$(f_2)$ hold. Then for each $\varepsilon>0$, $c_{\varepsilon}=m_{\infty}$.
\end{lemma}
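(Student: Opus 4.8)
# Proof Proposal for Lemma 6.1

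The plan is to establish the two inequalities $c_\varepsilon \geq m_\infty$ and $c_\varepsilon \leq m_\infty$ separately, exploiting the special structure of hypothesis $(\mathcal{H})$, namely $V(x)\geq V_\infty = V_0$, $K(x)\leq K_\infty$, $Q(x)\leq Q_\infty$, with strict inequality somewhere on a positive measure set. Observe first that under $(\mathcal{H})$ we have $I_\infty = I_0$, so $m_\infty = m_0$, and the autonomous problem $I_\infty$ admits a positive ground state $u_\infty\in\mathcal{N}_\infty$ with $I_\infty(u_\infty)=m_\infty$ by Proposition \ref{pro4-1}.

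For the inequality $c_\varepsilon\geq m_\infty$: given any $u\in H_\varepsilon\backslash\{0\}$, by $(ii)$ of Lemma \ref{lem3-1} there is $t_u>0$ with $t_uu\in\mathcal{N}_\varepsilon$, and a corresponding $s_u>0$ with $s_uu\in\mathcal{N}_\infty$. Since $V(\varepsilon x)\geq V_\infty$, $K(\varepsilon x)\leq K_\infty$, $Q(\varepsilon x)\leq Q_\infty$ pointwise, one has $I_\varepsilon(tu)\geq I_\infty(tu)$ for all $t\geq 0$, hence
\begin{equation*}
\max_{t\geq 0}I_\varepsilon(tu)\geq\max_{t\geq 0}I_\infty(tu)=I_\infty(s_uu)\geq m_\infty.
\end{equation*}
Taking the infimum over $u\in H_\varepsilon\backslash\{0\}$ and using the characterization $c_\varepsilon=\inf_{u\neq 0}\max_{t\geq 0}I_\varepsilon(tu)$ gives $c_\varepsilon\geq m_\infty$.

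For the reverse inequality $c_\varepsilon\leq m_\infty$: I would use the ground state $u_\infty$ of $I_\infty$ and a translation argument. For $R>0$ large, consider the translate $u_R(x):=u_\infty(x-\xi_R)$ for a point $\xi_R$ with $|\xi_R|\to\infty$ (so that $\varepsilon\xi_R$ moves off to infinity where $V,K,Q$ approach their limits). Since $u_\infty$ decays (in fact $\lim_{|x|\to\infty}u_\infty(x)=0$ by Proposition \ref{pro4-1}, and one can use its integrability), the quantities $\int_{\mathbb R^3}(V(\varepsilon x)-V_\infty)|u_R|^2\,{\rm d}x$, $\int_{\mathbb R^3}(K_\infty-K(\varepsilon x))F(u_R)\,{\rm d}x$, and $\int_{\mathbb R^3}(Q_\infty-Q(\varepsilon x))|u_R|^{2_s^\ast}\,{\rm d}x$ all tend to $0$ as $R\to\infty$, for each fixed $\varepsilon$, because the potentials converge to their limits at infinity and the mass of $u_R$ concentrates near $\xi_R$. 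Let $t_R>0$ be such that $t_Ru_R\in\mathcal{N}_\varepsilon$; by a computation analogous to the one in the proof of Lemma \ref{lem4-3} (using $(f_1)$ and the boundedness/compactness of $u_\infty$ in $H^s(\mathbb R^3)$), one shows $t_R\to 1$ as $R\to\infty$. Then
\begin{equation*}
c_\varepsilon\leq\max_{t\geq 0}I_\varepsilon(tu_R)=I_\varepsilon(t_Ru_R)=I_\infty(t_Ru_R)+o_R(1)\to I_\infty(u_\infty)=m_\infty,
\end{equation*}
so $c_\varepsilon\leq m_\infty$. Combining the two inequalities yields $c_\varepsilon=m_\infty$.

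The main obstacle I anticipate is the control of $t_R\to 1$ and of the non-local term $\int\phi_{u_R}^tu_R^2\,{\rm d}x$ under translation: the Riesz-potential term is translation invariant, which helps, but one must verify that truncation/translation does not destroy membership in $\mathcal{N}_\varepsilon$ in a uniform way and that the error terms really are $o_R(1)$ for fixed $\varepsilon$ — this requires the decay estimate on $u_\infty$ together with $(f_0)$--$(f_2)$ to bound $F(u_R)$ and $f(u_R)u_R$ on the region where $V,K,Q$ are still far from their limits. A secondary subtlety is ensuring that the strict-inequality-on-positive-measure part of $(\mathcal{H})$ is not needed here (it is only used later to rule out attainment), so that the clean equalities $V_\infty=V_0$, etc., are all that enter.
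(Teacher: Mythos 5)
Your proposal is correct and follows essentially the same route as the paper: the inequality $c_\varepsilon\geq m_\infty$ from the pointwise comparison $I_\varepsilon\geq I_\infty$ together with the Nehari characterization, and $c_\varepsilon\leq m_\infty$ by translating a ground state $u_\infty$ of $I_\infty$ to spatial infinity and passing to the limit (the paper writes $e_n=u_\infty(\cdot-y_n)$ with $|y_n|\to\infty$, determines $t_n$ so that $t_ne_n\in\mathcal{N}_\varepsilon$ via the Nehari identity \eqref{equ6-1}, argues $t_n\to 1$, and shows the potential-error integrals vanish). Your anticipated technical points — convergence $t_R\to1$ via the Nehari equation and translation invariance of the Riesz term — are exactly the ones the paper addresses.
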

\begin{proof}
Noting that $H_{\varepsilon}=H^s(\mathbb{R}^3)$, for any $\varepsilon>0$. By $(\mathcal{H})$, we have $I_{\infty}(u)\leq I_{\varepsilon}(u)$, for all $u\in H^s(\mathbb{R}^3)$. By $(ii)$ of Lemma \ref{lem3-1}, for each $u\in \mathcal{N}_{\infty}$, there exists $t_u>0$ such that $t_uu\in\mathcal{N}_{\varepsilon}$. Hence, for each $u\in \mathcal{N}_{\infty}$, we have that
\begin{align*}
0<m_{\infty}&=\inf_{u\in\mathcal{N}_{\infty}}I_{\infty}(u)\leq\max_{t\geq0}I_{\infty}(tu)\leq\max_{t\geq0}I_{\varepsilon}(tu)=I_{\varepsilon}(t_uu).
\end{align*}
By $(i)$ of Lemma \ref{lem3-1}, one has
\begin{equation*}
0<m_{\infty}\leq\inf_{u\in\mathcal{N}_{\infty}}I_{\varepsilon}(t_uu)=\inf_{v\in\mathcal{N}_{\varepsilon}}I_{\varepsilon}(v)=c_{\varepsilon}.
\end{equation*}
So, it suffices to show that $c_{\varepsilon}\leq m_{\infty}$.

By Proposition \ref{pro4-1}, there exists $u_{\infty}\in\mathcal{N}_{\infty}$ is a ground state solution of \eqref{equ2-5} with $\nu=V_{\infty}$, $\kappa=K_{\infty}$ and $\mu=Q_{\infty}$. Set $e_n(x)=u_{\infty}(x-y_n)$ where $y_n\in\mathbb{R}^3$ and $|y_n|\rightarrow\infty$ as $n\rightarrow\infty$. Then, there exists $t_n(e_n)>0$ such that $t_ne_n\in\mathcal{N}_{\varepsilon}$, that is,
\begin{align}\label{equ6-1}
t_n^2\int_{\mathbb{R}^3}&\Big(|(-\Delta)^{\frac{s}{2}}u_{\infty}|^2+V(\varepsilon x+\varepsilon y_n)|u_{\infty}|^2\Big)\,{\rm d}x+t_n^4\int_{\mathbb{R}^3}\phi_{u_{\infty}}^tu_{\infty}^2\,{\rm d}x\nonumber\\
&=\int_{\mathbb{R}^3}K(\varepsilon x+\varepsilon y_n)f(t_nu_{\infty})t_nu_{\infty}\,{\rm d}x+t_n^{2_s^{\ast}}\int_{\mathbb{R}^3}Q(\varepsilon x+\varepsilon y_n)|u_{\infty}|^{2_s^{\ast}}\,{\rm d}x.
\end{align}
This implies that $t_n$ cannot converge to zero and infinity. Suppose that $t_n\rightarrow t$ as $n\rightarrow\infty$. Letting $n\rightarrow\infty$ in \eqref{equ6-1}, we have that
\begin{align*}
\int_{\mathbb{R}^3}&\Big(|(-\Delta)^{\frac{s}{2}}u_{\infty}|^2+V_{\infty}|u_{\infty}|^2\Big)\,{\rm d}x+t^2\int_{\mathbb{R}^3}\phi_{u_{\infty}}^tu_{\infty}^2\,{\rm d}x\\
&=\int_{\mathbb{R}^3}K_{\infty}\frac{f(tu_{\infty})u_{\infty}}{t}\,{\rm d}x+t^{2_s^{\ast}-2}\int_{\mathbb{R}^3}Q_{\infty}|u_{\infty}|^{2_s^{\ast}}\,{\rm d}x.
\end{align*}
In view of $u_{\infty}\in\mathcal{N}_{\infty}$, we conclude that $t=1$. Since
\begin{align}\label{equ6-2}
c_{\infty}&\leq I_{\varepsilon}(t_n e_n)=I_{\infty}(t_nu_{\infty})+\frac{t_n^2}{2}\int_{\mathbb{R}^3}(V(\varepsilon x+\varepsilon y_n)-V_{\infty})|u_{\infty}|^2\,{\rm d}x\nonumber\\
&-\int_{\mathbb{R}^3}(K(\varepsilon x+\varepsilon y_n)-K_{\infty})F(t_nu_{\infty})\,{\rm d}x-\frac{t_n^{2_s^{\ast}}}{2_s^{\ast}}\int_{\mathbb{R}^3}(Q(\varepsilon x+\varepsilon y_n)-Q_{\infty})|u_{\infty}|^{2_s^{\ast}}\,{\rm d}x.
\end{align}
By the assumption on $V$, for any $\delta>0$, there exists $R>0$ such that
\begin{equation*}
\int_{|x|\geq\frac{R}{\varepsilon}}|V(\varepsilon x+\varepsilon y_n)-V_{\infty}||u_{\infty}|^2\,{\rm d}x\leq \delta.
\end{equation*}
By $|y_n|\rightarrow\infty$, according to Lebesgue¡¯s theorem, we have
\begin{equation*}
\lim_{n\rightarrow\infty}\int_{|x|<\frac{R}{\varepsilon}}|V(\varepsilon x+\varepsilon y_n)-V_{\infty}||u_{\infty}|^2\,{\rm d}x=0.
\end{equation*}
Thus,
\begin{equation*}
\lim_{n\rightarrow\infty}\int_{\mathbb{R}^3}|V(\varepsilon x+\varepsilon y_n)-V_{\infty}||u_{\infty}|^2\,{\rm d}x=0.
\end{equation*}
Similarly, we deduce that
\begin{equation*}
\lim_{n\rightarrow\infty}\int_{\mathbb{R}^3}|K(\varepsilon x+\varepsilon y_n)-K_{\infty}F(u_{\infty})\,{\rm d}x=0,\quad\lim_{n\rightarrow\infty}\int_{\mathbb{R}^3}|Q(\varepsilon x+\varepsilon y_n)-Q_{\infty}|u_{\infty}|^{2_s^{\ast}}\,{\rm d}x=0.
\end{equation*}
Therefore, using $t_n\rightarrow1$ and letting $n\rightarrow\infty$ in \eqref{equ6-2}, we infer that $c_{\varepsilon}\leq m_{\infty}$ and the proof is completed.
\end{proof}

{\bf Proof of Theorem \ref{thm1-2}.} Suppose by contradiction that there exist some $\varepsilon_0>0$ and $u_0\in\mathcal{N}_{\varepsilon_0}$ such that $I_{\varepsilon_0}(u_0)=c_{\varepsilon_0}$. From Lemma \ref{lem6-1}, $c_{\varepsilon_0}=m_{\infty}$. In view of Lemma \ref{lem3-1}, there exists $t_0>0$ such that $t_0u_0\in\mathcal{N}_{\infty}$. Thus, using the fact $u_0\in\mathcal{N}_{\varepsilon_0}$, we have that
\begin{equation*}
m_{\infty}\leq I_{\infty}(t_0u_0)\leq I_{\varepsilon_0}(t_0u_0)\leq \max_{t\geq0}I_{\varepsilon_0}(tu_0)=I_{\varepsilon_0}(u_0)=c_{\varepsilon_0}=m_{\infty}.
\end{equation*}
Thus, $m_{\infty}=I_{\infty}(t_0u_0)=I_{\varepsilon_0}(t_0u_0)$. However,
\begin{align*}
I_{\infty}(t_0u_0)&=I_{\varepsilon_0}(t_0u_0)+\frac{t_0^2}{2}\int_{\mathbb{R}^3}(V_{\infty}-V(\varepsilon_0 x))|u_0|^2\,{\rm d}x\nonumber\\
&+\int_{\mathbb{R}^3}(K(\varepsilon_0 x)-K_{\infty})F(t_0u_0)\,{\rm d}x+\frac{t_0^{2_s^{\ast}}}{2_s^{\ast}}\int_{\mathbb{R}^3}(Q(\varepsilon_0 x)-Q_{\infty})|u_0|^{2_s^{\ast}}\,{\rm d}x\\
&<I_{\varepsilon_0}(t_0u_0)
\end{align*}
which is a contradiction. The proof is completed.

\section{Appendix}
In this section, By a similar argument of Section 4 in \cite{SV}, we give the estimate of \eqref{equ3-4}, \eqref{equ3-5} and \eqref{equ3-6}. We prove them in the general case.

Let $\mathcal{S}_s=\mathcal{S}_s(\widetilde{u})$, $\widetilde{u}=\kappa(\mu^2+|x-x_0|^2)^{-\frac{N-2s}{2}}$, $x\in\mathbb{R}^N$ with $\kappa\in\mathbb{R}\backslash\{0\}$, $\mu>0$ and $x_0\in\mathbb{R}^N$ are fixed constant. Using scaling and translation transform, we see that
\begin{equation*}
\mathcal{S}_s=\mathcal{S}_s(\widetilde{u})=\mathcal{S}_s(\bar{u}), \quad \bar{u}(x)=\frac{\kappa}{(1+|x|^2)^{\frac{N-2s}{2}}}, x\in\mathbb{R}^N.
\end{equation*}
Set
\begin{equation*}
u^{\ast}(x)=\frac{\kappa}{(\varepsilon^2+|x-x_0/\varepsilon|^2)^{\frac{N-2s}{2}}},\,\, U_{\varepsilon}(x)=\varepsilon^{\frac{N-2s}{2}}u^{\ast}(x),\quad x\in\mathbb{R}^N,\,\,\varepsilon>0.
\end{equation*}
It is easy to check that
\begin{equation*}
\|(-\Delta)^{\frac{s}{2}}U_{\varepsilon}\|_2^2=\|(-\Delta)^{\frac{s}{2}}\bar{u}\|_2^2=\mathcal{S}_s\|\bar{u}\|_{2_s^{\ast}}^2=\mathcal{S}_s\|U_{\varepsilon}\|_{2_s^{\ast}}^2.
\end{equation*}
Let $\eta\in C_0^{\infty}(\mathbb{R}^N)$ be such that $0\leq \eta\leq1$, $\eta\Big|_{B_R(0)}=1$, ${\rm supp}\eta\subset B_{2R}(0)$. Set $u_{\varepsilon}(x)=\eta( x-x_0/\varepsilon)U_{\varepsilon}(x)$, for any $\varepsilon>0$ and $x\in\mathbb{R}^N$. By computation, we have the following estimate for $u_{\varepsilon}$.

{\bf Lemma A.1}.
$(i)$ Let $\rho>0$. If $x\in \mathbb{R}^N\backslash B_{\rho}(x_0/\varepsilon)$, then
\begin{equation*}
|u_{\varepsilon}(x)|\leq|U_{\varepsilon}(x)|\leq C_1\varepsilon^{\frac{N-2s}{2}},\quad |\nabla u_{\varepsilon}(x)|\leq C_2\varepsilon^{\frac{N-2s}{2}}
\end{equation*}
for any $\varepsilon>0$ and for some positive constants $C_1$ and $C_2$, possibly depending on $N,s,\rho$.\\
$(ii)$ For any $x\in\mathbb{R}^N$ and $y\in\mathbb{R}^N\backslash B_R(x_0/\varepsilon)$, with $|x-y|\leq\frac{R}{2}$,
\begin{equation*}
|u_{\varepsilon}(x)-u_{\varepsilon}(y)|\leq C_3\varepsilon^{\frac{N-2s}{2}}|x-y|.
\end{equation*}
$(iii)$ For any $ x,  y\in\mathbb{R}^N\backslash B_R(x_0/\varepsilon)$,
\begin{equation*}
|u_{\varepsilon}(x)-u_{\varepsilon}(y)|\leq C_4\varepsilon^{\frac{N-2s}{2}}\min\{1,|x-y|\}.
\end{equation*}
for any $\varepsilon>0$ and for some positive constants $C_3$ and $C_4$, possibly depending on $N,s,\rho$.
\begin{proof}
$(i)$ By the definition of $u_{\varepsilon}$, for any $x\in\mathbb{R}^N\backslash B_{\rho}(x_0/\varepsilon)$, we have
\begin{align*}
|u_{\varepsilon}(x)|\leq |U_{\varepsilon}(x)|&\leq C\varepsilon^{\frac{N-2s}{2}}\frac{1}{(\varepsilon^2+|x-x_0/\varepsilon|^2)^{\frac{N-2s}{2}}}\leq C\varepsilon^{\frac{N-2s}{2}}\frac{1}{(\varepsilon^2+\rho^2)^{\frac{N-2s}{2}}}\\
&\leq C_1\varepsilon^{\frac{N-2s}{2}}
\end{align*}
and
\begin{align*}
\Big|\nabla u_{\varepsilon}(x)\Big|&=\Big|\nabla (\eta(x-x_0/\varepsilon))U_{\varepsilon}(x)+\eta(x-x_0/\varepsilon)\nabla U_{\varepsilon}(x)\Big|\\
&\leq C\Big(|U_{\varepsilon}(x)|+\varepsilon^{\frac{N-2s}{2}}\frac{| x-x_0/\varepsilon|}{(\varepsilon^2+|x-x_0/\varepsilon|^2)^{\frac{N+2-2s}{2}}}\Big)\\
&\leq C\Big(|U_{\varepsilon}(x)|+\varepsilon^{\frac{N-2s}{2}}\frac{|x-x_0/\varepsilon|^2}{\rho(\varepsilon^2+|x-x_0/\varepsilon|^2)^{\frac{N+2-2s}{2}}}\Big)\\
&\leq C_2\varepsilon^{\frac{N-2s}{2}}.
\end{align*}

$(ii)$ Let $x\in\mathbb{R}^N$, $y\in \mathbb{R}^N\backslash B_{R}(x_0/\varepsilon)$, with $|x-y|\leq\frac{R}{2}$, and let $\xi$ be any point on the segment joining $x$ and $y$. Then
$\xi=tx+(1-t)y$, $t\in[0,1]$. Hence
\begin{equation*}
|\xi-x_0/\varepsilon|\geq|x-x_0/\varepsilon|-t|x-y|\geq R-\frac{R}{2}=\frac{R}{2}.
\end{equation*}
Taking $\rho=\frac{R}{2}$ in $(i)$, implies that $|\nabla u_{\varepsilon}(\xi)|\leq C\varepsilon^{\frac{N-2s}{2}}$ and so by Taylor expansion,
\begin{equation}\label{A-2}
|u_{\varepsilon}(x)-u_{\varepsilon}(y)|\leq C\varepsilon^{\frac{N-2s}{2}}|x-y|, \quad \text{for any}\,\, \varepsilon>0.
\end{equation}

$(iii)$ Let $ x, y\in\mathbb{R}^N\backslash B_R(x_0/\varepsilon)$. If $|x-y|\leq\frac{R}{2}$, then the second conclusion of $(ii)$ follows from \eqref{A-2}. If $|x-y|>\frac{R}{2}$, using the conclusion $(i)$, we have that
\begin{equation*}
|u_{\varepsilon}(x)-u_{\varepsilon}(y)|\leq |u_{\varepsilon}(x)|+|u_{\varepsilon}(y)|\leq C\varepsilon^{\frac{N-2s}{2}}.
\end{equation*}
The proof is completed.
\end{proof}

{\bf Lemma A.2}. Let $s\in(0,1)$ and $n>2s$. Then the following estimate holds true
\begin{equation*}
\int_{\mathbb{R}^N}|(-\Delta)^{\frac{s}{2}}u_{\varepsilon}|^2\,{\rm d}x\leq \mathcal{S}_s^{\frac{N}{2s}}+O(\varepsilon^{N-2s})
\end{equation*}
as $\varepsilon\rightarrow0$.

\begin{proof}
By the definition of $u_{\varepsilon}$, we can rewrite $\|(-\Delta)^{\frac{s}{2}}u_{\varepsilon}\|_2^2$ as follows
\begin{align*}
\|(-\Delta)^{\frac{s}{2}}u_{\varepsilon}\|_2^2&=\frac{1}{C(N,s)}\int_{\mathbb{R}^{2N}}\frac{|u_{\varepsilon}(x)-u_{\varepsilon}(y)|^2}{|x-y|^{N=2s}}\,{\rm d}x\,{\rm d}y\\
&=\frac{1}{C(N,s)}\Big(\int_{B_R(x_0/\varepsilon)\times B_R(x_0/\varepsilon)}\frac{|U_{\varepsilon}(x)-U_{\varepsilon}(y)|^2}{|x-y|^{N+2s}}\,{\rm d}x\,{\rm d}y\\
&+\int_{\mathbb{R}^N\backslash B_R(x_0/\varepsilon)\times \mathbb{R}^N\backslash B_R(x_0/\varepsilon)}\frac{|u_{\varepsilon}(x)-u_{\varepsilon}(y)|^2}{|x-y|^{N+2s}}\,{\rm d}x\,{\rm d}y\\
&+2\int_{\mathbb{D}}\frac{|u_{\varepsilon}(x)-u_{\varepsilon}(y)|^2}{|x-y|^{N+2s}}\,{\rm d}x\,{\rm d}y+2\int_{\mathbb{E}}\frac{|u_{\varepsilon}(x)-u_{\varepsilon}(y)|^2}{|x-y|^{N+2s}}\,{\rm d}x\,{\rm d}y\Big)
\end{align*}
where the sets $\mathbb{D}$ and $\mathbb{E}$ are given by
\begin{equation*}
\mathbb{D}=\Big\{(x,y)\in\mathbb{R}^{2N}\,\,\Big|\,\, x\in B_R(x_0/\varepsilon),\,\, y\in\mathbb{R}^N\backslash B_R(x_0/\varepsilon)\,\, \text{and}\,\, |x-y|>\frac{R}{2} \Big\}
\end{equation*}
and
\begin{equation*}
\mathbb{E}=\Big\{(x,y)\in\mathbb{R}^{2N}\,\,\Big|\,\, x\in B_R(x_0/\varepsilon),\,\, y\in\mathbb{R}^N\backslash B_R(x_0/\varepsilon)\,\, \text{and}\,\, |x-y|\leq\frac{R}{2} \Big\}.
\end{equation*}
Similar to the proof of Proposition 21 in \cite{SV}, using Lemma A.1, it is easy to show that, as $\varepsilon\rightarrow0$, there hold
\begin{equation}\label{A-2}
\int_{\mathbb{E}}\frac{|u_{\varepsilon}(x)-u_{\varepsilon}(y)|^2}{|x-y|^{N+2s}}\,{\rm d}x\,{\rm d}y\leq O(\varepsilon^{N-2s})
\end{equation}
and
\begin{equation}\label{A-3}
\int_{\mathbb{R}^N\backslash B_R(x_0/\varepsilon)\times \mathbb{R}^N\backslash B_R(x_0/\varepsilon)}\frac{|u_{\varepsilon}(x)-u_{\varepsilon}(y)|^2}{|x-y|^{N+2s}}\,{\rm d}x\,{\rm d}y\leq O(\varepsilon^{N-2s}).
\end{equation}
To complete the proof, checking the proof of Proposition 21 in \cite{SV}, it is sufficient to verify the following estimate holds
\begin{equation*}
\int_{\mathbb{D}}\frac{|U_{\varepsilon}(x)||U_{\varepsilon}(y)|}{|x-y|^{N+2s}}\,{\rm d}x\,{\rm d}y\leq O(\varepsilon^{N-2s})
\end{equation*}
as $\varepsilon\rightarrow0$.

In fact, by $(i)$ of Lemma A.1, making the change of variables $\xi=x-y$, we deduce that
\begin{align*}
\int_{\mathbb{D}}\frac{U_{\varepsilon}(x)||U_{\varepsilon}(y)|}{|x-y|^{N+2s}}\,{\rm d}x\,{\rm d}y&\leq C\varepsilon^{N-2s}\int_{\mathbb{D}}\frac{1}{(\varepsilon^2+|x-x_0/\varepsilon|^2)^{\frac{N-2s}{2}}}\frac{1}{|x-y|^{N+2s}}\,{\rm d}x\,{\rm d}y\\
&=C\varepsilon^{N-2s}\int_{\mathbb{D}_{\varepsilon}}\frac{1}{(1+|x|^2)^{\frac{N-2s}{2}}}\frac{1}{|x-y|^{N+2s}}\,{\rm d}x\,{\rm d}y\\
&\leq C\varepsilon^{N-2s}\int_{B_{R/\varepsilon}(0)}\frac{1}{(1+|x|^2)^{\frac{N-2s}{2}}}\,{\rm d}x\int_{|\xi|>\frac{R}{2\varepsilon}}\frac{1}{|\xi|^{N+2s}}\,{\rm d}\xi\\
&\leq C\varepsilon^{N-2s}\varepsilon^{-2s}\varepsilon^{2s}=O(\varepsilon^{N-2s})
\end{align*}
where $\mathbb{D}_{\varepsilon}$ is defined as
\begin{equation*}
\mathbb{D}_{\varepsilon}=\Big\{(x,y)\in\mathbb{R}^{2N}\,\,\Big|\,\,x\in B_{R/\varepsilon}(0), y\in \mathbb{R}^N\backslash B_{R/\varepsilon}(0), |x-y|>\frac{R}{2\varepsilon}\Big\}.
\end{equation*}
Finally, combing with \eqref{A-2}, \eqref{A-3}, we conclude that
\begin{align*}
\|(-\Delta)^{\frac{s}{2}}u_{\varepsilon}\|_2^2&\leq\frac{1}{C(N,s)}\int_{B_R(x_0/\varepsilon)\times B_R(x_0/\varepsilon)}\frac{|U_{\varepsilon}(x)-U_{\varepsilon}(y)|^2}{|x-y|^{N+2s}}\,{\rm d}x\,{\rm d}y+O(\varepsilon^{N-2s})\\
&\leq \frac{1}{C(N,s)}\int_{\mathbb{R}^{2N}}\frac{|U_{\varepsilon}(x)-U_{\varepsilon}(y)|^2}{|x-y|^{N+2s}}\,{\rm d}x\,{\rm d}y+O(\varepsilon^{N-2s})\\
&=\|(-\Delta)^{\frac{s}{2}}\bar{u}\|_2^2+O(\varepsilon^{N-2s})
\end{align*}
as $\varepsilon\rightarrow0$.
\end{proof}

{\bf Lemma A.3.}
Let $s\in(0,1)$ and $N>2s$, $p\in(1,2_s^{\ast})$. Then the following estimates hold.
\begin{equation}\label{A-4}
\int_{\mathbb{R}^N}|u_{\varepsilon}(x)|^p\,{\rm d}x=\left\{
          \begin{array}{ll}
           O(\varepsilon^{N-\frac{N-2s}{2}p}),&\hbox{$p>\frac{N}{N-2s}$,} \\
O(\varepsilon^{N-\frac{N-2s}{2}p}|\log\varepsilon|), & \hbox{$p=\frac{N}{N-2s}$,} \\
O(\varepsilon^{\frac{N-2s}{2}p}), & \hbox{$1<p<\frac{N}{N-2s}$.}
          \end{array}
        \right.
\end{equation}
and
\begin{equation}\label{A-5}
\int_{\mathbb{R}^N}|u_{\varepsilon}(x)|^{2_s^{\ast}}\,{\rm d}x=\int_{\mathbb{R}^N}|\bar{u}(x)|^{2_s^{\ast}}\,{\rm d}x+O(\varepsilon^N)
\end{equation}
as $\varepsilon\rightarrow0$.

\begin{proof}
By the definition of $u_{\varepsilon}$, we have
\begin{align*}
  \int_{\mathbb{R}^N}|u_{\varepsilon}(x)|^p\,{\rm d}x&=\int_{B_{R}(x_0/\varepsilon)}|U_{\varepsilon}(x)|^p\,{\rm d}x+\int_{B_{2R}(x_0/\varepsilon)\backslash B_R(x_0/\varepsilon)}|\psi(x-x_0/\varepsilon)U_{\varepsilon}(x)|^p\,{\rm d}x \\
  &\geq C\varepsilon^{\frac{N-2s}{2}p}\int_{B_{R}(x_0/\varepsilon)}\frac{1}{(\varepsilon^2+|x-x_0/\varepsilon|^2)^{\frac{(N-2s)p}{2}}}\,{\rm d}x\\
&\geq C\varepsilon^{N-\frac{N-2s}{2}p}\int_{\delta}^{R/\varepsilon}\frac{1}{(1+r^2)^{\frac{(N-2s)p}{2}}}r^{N-1}\,{\rm d}r
\end{align*}
for any $0<\delta<R/\varepsilon$. Therefore,
\begin{align*}
   \int_{\mathbb{R}^3}|u_{\varepsilon}(x)|^p\,{\rm d}x&\geq C\varepsilon^{N-\frac{N-2s}{2}p}\int_{\delta}^{R/\varepsilon}\frac{1}{(1+r^2)^{\frac{(N-2s)p}{2}}}r^{N-1}\,{\rm d}r  \\
  &\geq C\varepsilon^{N-\frac{N-2s}{2}p}\int_{\delta}^{R/\varepsilon}\frac{1}{r^{N(p-1)-2s p+1}}dr \\
  &=\left\{
          \begin{array}{ll}
            -C_{1,s}\varepsilon^{\frac{Np}{2}-sp}+C_{2,s}\varepsilon^{N-\frac{(N-2s)p}{2}}, & \hbox{$\frac{p-1}{p}N>2s$,} \\
            C_{1,s}\varepsilon^{N-\frac{(N-2s)p}{2}}|\log\varepsilon|+C_{2,s}\varepsilon^{N-\frac{N-2s}{2}p}, & \hbox{$\frac{p-1}{p}N=2s$,} \\
            C_{1,s}\varepsilon^{\frac{Np}{2}-sp}-C_{2,s}\varepsilon^{N-\frac{(N-2s)p}{2}}, & \hbox{$\frac{p-1}{p}N<2s$}
          \end{array}
        \right.\\
   &=\left\{
          \begin{array}{ll}
           O(\varepsilon^{N-\frac{N-2s}{2}p}),&\hbox{$p>\frac{N}{N-2s}$,} \\
O(\varepsilon^{N-\frac{N-2s}{2}p}|\log\varepsilon|), & \hbox{$p=\frac{N}{N-2s}$,} \\
O(\varepsilon^{\frac{N-2s}{2}p}), & \hbox{$1<p<\frac{N}{N-2s}$.}
          \end{array}
        \right.
\end{align*}
On the other hand, arguing in the same way, we infer that
\begin{align*}
 \int_{\mathbb{R}^N}|u_{\varepsilon}(x)|^p\,{\rm d}x&\leq\int_{B_{2R}(x_0/\varepsilon)}|U_{\varepsilon}(x)|^p\,{\rm d}x\\
&\leq C\varepsilon^{N-\frac{N-2s}{2}p}\int_{0}^{2R/\varepsilon}\frac{r^{N-1}}{(1+r^2)^{\frac{N-2s}{2}p}}dr\\
&=C\varepsilon^{N-\frac{N-2s}{2}p}\Big(\int_0^{\delta_1}\frac{r^{N-1}}{(1+r^2)^{\frac{N-2s}{2}p}}dr+\int_{\delta_1}^{2R/\varepsilon}\frac{1}{r^{N(p-1)-2s p+1}}dr\Big)\\
&=\left\{
          \begin{array}{ll}
            O(\varepsilon^{N-\frac{N-2s}{2}p}),&\hbox{$p>\frac{N}{N-2s}$,} \\
O(\varepsilon^{N-\frac{N-2s}{2}p}|\log\varepsilon|), & \hbox{$p=\frac{N}{N-2s}$,} \\
O(\varepsilon^{\frac{N-2s}{2}p}), & \hbox{$1<p<\frac{N}{N-2s}$,}
          \end{array}
        \right.
\end{align*}
where $0<\delta_1<2R/\varepsilon$. Hence, \eqref{A-4} holds true.

$(ii)$

\begin{align*}
\int_{\mathbb{R}^N}|u_{\varepsilon}(x)|^{2_s^{\ast}}\,{\rm d}x&=\int_{\mathbb{R}^N}|U_{\varepsilon}(x)|^{2_s^{\ast}}\,{\rm d}x+\int_{\mathbb{R}^N}\Big(|\eta(x-x_0/\varepsilon)|^{2_s^{\ast}}-1\Big)|U_{\varepsilon}(x)|^{2_s^{\ast}}\,{\rm d}x\\
&=\|\bar{u}\|_{2_s^{\ast}}^{2_s^{\ast}}+\int_{\mathbb{R}^N\backslash B_R(x_0/\varepsilon)}\Big(|\eta(x-x_0/\varepsilon)|^{2_s^{\ast}}-1\Big)|U_{\varepsilon}(x)|^{2_s^{\ast}}\,{\rm d}x\\
&=\|\bar{u}\|_{2_s^{\ast}}^{2_s^{\ast}}+\int_{\mathbb{R}^N\backslash B_{R/\varepsilon}(0)}\Big(|\eta(x)|^{2_s^{\ast}}-1\Big)\frac{1}{(1+|x|^2)^{N}}\,{\rm d}x\\
&\approx\|\bar{u}\|_{2_s^{\ast}}^{2_s^{\ast}}+\int_{\mathbb{R}^N\backslash B_{R/\varepsilon}(0)}\frac{1}{|x|^{2N}}\,{\rm d}x\\
&=\|\bar{u}\|_{2_s^{\ast}}^{2_s^{\ast}}+O(\varepsilon^N)
\end{align*}
as $\varepsilon\rightarrow0$.
\end{proof}

{\bf Acknowledgements.}
This work was done when the first author visited Department of Mathematics, Texas A$\&$M University-Kingsville under the support of
China Scholarship Council (201508140053), and he thanks Department of Mathematics, Texas A$\&$M University-Kingsville for their kind hospitality.
The first author is also supported by NSFC grant 11501403.

\end{document}